\newtheorem{thm}{Theorem}
\newtheorem{prop}[thm]{Proposition}
\newtheorem{cor}[thm]{Corollary}
\newtheorem{claim}[thm]{Claim}
\newtheorem{lemma}[thm]{Lemma}
\newtheorem{fact}[thm]{Fact}
\theoremstyle{definition}
\newtheorem{defin}[thm]{Definition}
\theoremstyle{remark}
\newtheorem{remark}[thm]{Remark}
\newcommand{\en}{\mathbb N}
\newcommand{\setsep}{:\;}
\newcommand{\Rea}{\mathbb{R}}
\newcommand{\Nat}{\mathbb{N}}
\newcommand{\Rat}{\mathbb{Q}}
\newcommand{\Norm}{\|\cdot \|}
\newcommand{\diam}{\operatorname{diam}}
\newcommand{\Met}{\mathcal{M}}
\newcommand{\Banach}{\mathcal{B}}
\newcommand{\Corr}{\mathcal{R}}
\newcommand{\Span}{\operatorname{span}}
\newcommand{\Net}{\mathcal{N}}
\newcommand{\Lip}{\operatorname{Lip}}
\newcommand{\PP}{\mathcal{P}}
\begin{document}

\title[Complexity of distances]{Complexity of distances: Reductions of distances between metric and Banach spaces}

\author[M. C\' uth]{Marek C\'uth}
\author[M. Doucha]{Michal Doucha}
\author[O. Kurka]{Ond\v{r}ej Kurka}
\email{cuth@karlin.mff.cuni.cz}
\email{doucha@math.cas.cz}
\email{kurka.ondrej@seznam.cz}

\address[M.~C\' uth, O.~Kurka]{Charles University, Faculty of Mathematics and Physics, Department of Mathematical Analysis, Sokolovsk\'a 83, 186 75 Prague 8, Czech Republic}
\address[M.~Doucha, O.~Kurka]{Institute of Mathematics of the Czech Academy of Sciences, \v{Z}itn\'a 25, 115 67 Prague 1, Czech Republic}

\subjclass[2010] {03E15, 46B20, 54E50, (primary),   46B80 (secondary)}

\keywords{Gromov-Hausdorff distance, Banach-Mazur distance, Kadets distance, analytic pseudometrics, analytic equivalence relations}

\begin{abstract}
We show that all the standard distances from metric geometry and functional analysis, such as Gromov-Hausdorff distance, Banach-Mazur distance, Kadets distance, Lipschitz distance, Net distance, and Hausdorff-Lipschitz distance have all the same complexity and are reducible to each other in a precisely defined way.

This is done in terms of descriptive set theory and is a part of a larger research program initiated by the authors in \cite{CDKpart1}. The paper is however targeted also to specialists in metric geometry and geometry of Banach spaces.
\end{abstract}
\maketitle

\section*{Introduction}
Metric geometry and nonlinear geometry of Banach spaces are rapidly evolving fields connected to many different areas of mathematics including Riemannian geometry, Banach space theory, graph theory, computer science, etc. One of their feature is that they, as `metric disciplines', quantitatively measure non-equivalence of the objects they work with by distances. Standard examples of such distances are the Gromov-Hausdorff distance between metric spaces and the Banach-Mazur distance between Banach spaces, but the list of useful distances is quite large and the study of those and their mutual relations inspired mathematicians to prove several deep theorems, most notably in the fields of the non-linear geometry of Banach spaces and geometry of Riemannian manifolds, see e.g. \cite{DuKa, G16, GKL00, KalOst, MaPo, O, SoWe}. We have been also inspired by the influential book of Gromov (\cite{Gro}), where many of the distances we work with were defined.

The issue we address in this paper is to compare the complexities, on the small scale, of the distances that have recently received attention in metric geometry and Banach space theory. For instance, we look for a constructive assignment of Banach spaces to metric spaces so that the Banach-Mazur distance of the assigned Banach spaces is small if and only if the Gromov-Hausdorff distance of the original metric spaces is small.

In order to provide such a comparison, given distances $d_1$ and $d_2$ on two classes of metric or Banach spaces $\PP_1$ and $\PP_2$, respectively, we say that $d_1$ is \emph{Borel-uniformly continuous reducible} to $d_2$ if there exists a function $f:\PP_1\to\PP_2$ which is a uniformly continuous embedding with respect to distances $d_1$ and $d_2$, see Definition~\ref{defin:Borel-UnifRedukce} for a more precise treatment.

The following is our main result, which therefore in a (certain precise) sense says that various problems ranging from linear classification of Banach spaces to large scale geometry of metric spaces are comparable in their difficulty.
\begin{thm}\label{thm:intro1}
\begin{enumerate}
\item\label{thm:intro1:(1)} The following pseudometrics are mutually Borel-uniformly continuous bi-reducible: the {\bf Gromov-Hausdorff distance} when restricted to Polish metric spaces, to metric spaces bounded from above, from below, from both above and below, to Banach spaces; the {\bf Banach-Mazur distance} on Banach spaces, the {\bf Lipschitz distance} on Polish metric spaces and Banach spaces; the {\bf Kadets distance} on Banach spaces; the {\bf Hausdorff-Lipschitz distance} on Polish metric spaces; the {\bf net distance}  on Banach spaces.
\item The pseudometrics above are Borel-uniformly continuous reducible to the {\bf uniform} distance on Banach spaces.
\end{enumerate}
\end{thm}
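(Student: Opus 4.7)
The plan is to tie all pseudometrics listed in (1) into a single Borel-uniformly continuous equivalence class by constructing a cycle of reductions, and then to add one further edge to the uniform distance in order to prove (2). Several edges are essentially for free: the Gromov-Hausdorff distance on bounded, lower-bounded, or two-sided bounded Polish metric spaces is obtained by restriction from the general case, and the truncation $d \mapsto \min(d,1)$ (Borel on the standard codes of Polish metric spaces) moves between the bounded and unbounded versions with uniformly controlled distortion. Likewise, Banach spaces form a Borel subclass of Polish metric spaces, and the Hausdorff-Lipschitz distance is related to Gromov-Hausdorff by standard inequalities, each of which provides one half of a reduction at no cost.

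The substantive step is the bridge between metric and Banach spaces. The plan is to assign Borel-measurably to each bounded pointed Polish metric space $M$ a canonical Banach space $f(M)$, such as the Lipschitz-free space $F(M)$ or an appropriate isometric copy inside $C(M)$, in such a way that one obtains two-sided estimates of the form $d_K(f(M),f(N)) \le \varphi(d_{GH}(M,N))$ and $d_{GH}(M,N) \le \psi(d_K(f(M),f(N)))$ with continuous moduli $\varphi,\psi$ vanishing at $0$. The upper estimate is the easier direction and is proved by extending an $\varepsilon$-correspondence on the metric side to an almost-isometry of free spaces. The lower estimate, which I expect to be the main technical obstacle, recovers a Gromov-Hausdorff correspondence between $M$ and $N$ from a Kadets-almost-isometry of $f(M)$ and $f(N)$; this is achieved by locating a canonical net inside each $f(M)$, typically the canonical image of $M$ itself, and transferring the almost-isometry back to the metric level.

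Once the metric-Banach bridge is in place, the internal equivalence of the Banach-space distances (Banach-Mazur, Kadets, Lipschitz, net) follows by Borel-measurably implementing the classical quantitative comparisons between them, for instance by pairing a Banach space with a canonical net or a canonical linearization of an almost-isometry; analogous squeezing arguments yield the bi-reducibility of Hausdorff-Lipschitz and Lipschitz distances on Polish metric spaces to Gromov-Hausdorff, completing the cycle required for (1). For (2), the uniform distance dominates each of the previous distances up to a controlling modulus, so the identity map gives one direction of the reduction; the other direction is obtained by a Borel selection that picks, for each Banach space, a canonical representative (e.g.\ via a fixed embedding into a universal Banach space) on which a Kadets-almost-isometry can be promoted to a uniform homeomorphism of small modulus. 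Composing this reduction with the bi-reductions from (1) yields the reduction of every listed pseudometric to the uniform distance.
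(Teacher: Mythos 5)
Your high-level architecture (close a cycle of reductions through a hub, then add one edge to $\rho_U$) matches the paper's, but several of the edges you propose rest on ideas that demonstrably fail, and the two genuinely hard constructions are missing. The most serious problem is the metric-to-Banach bridge. If $f(M)$ is the Lipschitz-free space, the required modulus $\psi$ with $\psi(0^+)=0$ cannot exist: $F([0,1])$ and $F([0,2])$ are both linearly isometric to $L_1$ of an interval, hence to each other, so $\rho_K(f([0,1]),f([0,2]))=0$ while $\rho_{GH}([0,1],[0,2])>0$; the same objection applies to any functorial assignment that forgets scale. The paper's bridge (Theorem~\ref{thmGHtoK}) is instead a bespoke renorming of $\ell_2$: a metric $f\in\Met_{1/2}^{1}$ perturbs the Hilbert norm in each direction $e_{n,m}=(e_n+e_m)/\sqrt2$ by an amount encoding $f(n,m)$, and the hard direction recovers a permutation of $\Nat$ from a Kadets almost-isometry using the $1$-separation of the vectors $\pm e_{n,m}$ and the rigidity of the Euclidean sphere. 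Nothing in your proposal substitutes for this. A second gap: there are no ``classical quantitative comparisons'' making the identity map a reduction among $\rho_{BM}$, $\rho_K$, $\rho_L$, $\rho_N$ on $\Banach$ --- one has $\rho_K\leq\rho_{BM}$, but Kadets or net distance zero does not imply small Banach--Mazur distance, so the internal equivalence of the Banach-space distances cannot be obtained by ``implementing'' known inequalities. The paper instead routes each of these separately back to $\rho_{GH}$ on $\Met$ via distinct constructions: a graph metric encoding the $\Rat$-linear structure and the norm through path lengths for $\rho_{BM}$ (Theorem~\ref{thm:BMtoGH}), and a metric space built from a symmetric dense subset of the sphere together with marker points $p_{F,k}$ recording $\|\sum_{k\in F}x_k\|/|F|$ for $\rho_K$ (Theorem~\ref{thm:KadetsToGH}).

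Two smaller but still genuine errors. Truncation $d\mapsto\min(d,1)$ is not a reduction from unbounded to bounded Gromov--Hausdorff distance: two two-point spaces $\{0,10\}$ and $\{0,100\}$ truncate to isometric spaces while being far apart, so only one of the two implications in Definition~\ref{defin:Borel-UnifRedukce} holds; the bi-reducibility of the bounded and unbounded versions is a nontrivial result quoted from \cite{CDKpart1}. Finally, for part (2) the identity map is not a reduction to $\rho_U$ (small uniform distance does not imply small Banach--Mazur or Kadets distance on arbitrary Banach spaces), and no reduction \emph{from} $\rho_U$ is claimed or needed. The paper obtains the reduction to $\rho_U$ from the same $\ell_2$-renorming, using that on its image $\rho_U$ is squeezed between $2\rho_N$ and $\rho_{BM}$, both of which are there uniformly equivalent to $\rho_{GH}(M_f,M_g)$; your plan of ``promoting a Kadets almost-isometry to a uniform homeomorphism'' by a Borel selection is not available.
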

To illustrate the meaning, for example the proof that the Banach-Mazur distance on Banach spaces (denoted by $\rho_{BM}$) is Borel-uniformly continuous reducible to the Lipschitz distance on Polish metric spaces (denoted by $\rho_L$) gives a Borel assignment (that is, a very constructive one, avoiding e.g. the axiom of choice) which assigns to a given Banach space $X$ (separable, infinite-dimensional) a Polish metric space $M(X)$ in such a way that this assignment is a uniformly continuous embedding with respect to pseudometrics $\rho_{BM}$ and $\rho_L$. In particular, $\rho_{BM}(X,Y)=0$ if and only if $\rho_L(M(X),M(Y))=0$. Thus, the problem of whether two Banach spaces are close with respect to the Banach-Mazur distance may be transferred to the problem of whether certain metric spaces are close with respect to the Lipschitz distance.
\bigskip

We recall that there is an active and well established stream within descriptive set theory, called \emph{invariant descriptive set theory} (IDST), whose aim is to provide such reductions for equivalence relations. So for example, it is known that the complexities of the equivalence relations of isomorphism of separable $C^*$-algebras, homeomorphism of metrizable compact spaces, and linear isometry of Banach spaces are the same (see \cite{Sabok},\cite{Zie}, \cite{Melleray}). On the other hand, they are strictly less complex than linear isomorphism of Banach spaces \cite{FLR}, isomorphism of separable operator spaces \cite{ACKKLS}, and strictly more complex than the isomorphism of countable graphs. We refer to \cite{gao} as a general reference. 

We follow this line of thought and our results are also written in the terms of IDST. However, our aim is to make it comprehensible and interesting also for researchers working in metric geometry and geometry of Banach spaces without particular knowledge of descriptive set theory. We must note that this paper naturally complements our paper \cite{CDKpart1}, which is targeted to descriptive set theorists and to which we refer for additional motivation and some general results. However, since the readership of these two articles likely will not be exactly the same, we try to make this paper self-contained.

The paper is organized as follows. In Section~\ref{section:prelim}, we present our examples of distances and prove basic facts about them. We also introduce some basic notions from descriptive set theory and repeat our definitions from \cite{CDKpart1}. In Sections~\ref{sectionReduction1}, \ref{sectionReduction2}, and \ref{sectionReduction3} we concentrate on the constructions of our reductions. The final Section~\ref{section:problems} summarizes our findings and suggests some problems.

\section{Preliminaries and basic results}\label{section:prelim}
The goal of this section is to recall several basic notions from descriptive set theory, such as coding of Polish metric spaces or Banach spaces, and to introduce the distances we work with in this paper. We also prove here several basic results about these distances. The notation and terminology is standard, for the undefined notions see \cite{fhhz} for Banach spaces and \cite{Ke} for descriptive set theory. We only emphasize here that by writting that a mapping is an isometry (or isomorphism) we do not assume it is surjective.

\subsection{Coding of Polish metric spaces and Banach spaces}

We begin with formalizing the class of all infinite Polish metric spaces as a standard Borel space. In most situations it will not be important how we formalize this class, but whenever it does become important we shall use the following definition.

\begin{defin}
By $\Met$ we denote the space of all metrics on $\Nat$. This gives $\Met$ a Polish topology inherited from $\Rea^{\Nat\times\Nat}$.

If $p$ and $q$ are positive real numbers, by $\Met_p$, $\Met^q$ and $\Met_p^q$ respectively, we denote the space of metrics with values in $\{0\}\cup [p,\infty)$, $[0,q]$, and $\{0\}\cup[p,q]$ (assuming that $p<q$), respectively.
\end{defin}
\begin{remark}Every $f\in\Met$ is then a  code for the Polish metric space $M_f$ which is the completion of $(\en,f)$.  Hence, in this sense we may refer to the set $\Met$ as to the standard Borel space of all infinite Polish metric spaces. This approach was used for the first time by Vershik \cite{ver} and further e.g. in \cite{cle}, see also \cite[page 324]{gao}. Another possible approach is to view all Polish metric spaces as the Effros-Borel space of all closed subspaces of a universal space such as the Urysohn space $\mathbb{U}$ (the Effros-Borel space is then denoted by $F(\mathbb{U})$). When one considers the space of all pseudometrics on $\Nat$ then these two approaches are equivalent, see e.g. \cite[Theorem 14.1.3]{gao}. Similarly, one can get a Borel isomorphism $\Theta$ between $\Met$ and $F(\mathbb{U})\setminus F_{fin}(\mathbb{U})$, where $F_{fin}(\mathbb{U})$ denotes the Borel set of finite subsets of $\mathbb{U}$, such that $\Theta(f)$ is isometric to $M_f$ for every $f\in\Met$. Since the Borel set of finite metric spaces is not interesting from our point of view we will ignore it in the sequel.
\end{remark}

\begin{remark}
Let $(M,d)$ be a separable metric space. If there is no danger of confusion, we write $M\in\Met$ by which we mean that the metric $d$ restricted to a countable dense subset of $M$ induces a metric $d'\in\Met$. Analogously, if there is no danger of confusion, we write $M\in \Met_p$, $M\in \Met^q$ or $M\in \Met_p^q$.
\end{remark}

Next, we formalize the class of all infinite-dimensional separable Banach spaces as a standard Borel space. As in the case of infinite Polish metric spaces, the concrete coding of this space is usually not important. However, when we compute that certain
maps from or into this space are Borel we adopt a coding analogous to that one for $\Met$ (and which is more similar to the general coding of metric structures from \cite{BYDNT}).

\begin{defin}\label{defin:spaceOfBanachSpaces}
Let us denote by $V$ the vector space over $\Rat$ of all finitely
supported sequences of rational numbers, that is, the unique infinite-dimensional vector space over $\Rat$ with a countable Hamel basis $(e_n)_{n\in\Nat}$. By $\Banach_0$ we denote the space of all norms on the vector space $V$. This gives $\Banach_0$ a Polish topology inherited from $\Rea^V$. We shall consider only those norms for which its canonical extension to the real vector space $c_{00}$ is still a norm; that is, norms for which the elements $(e_n)_n$ are not only $\Rat$-linearly independent, but also $\Rea$-linearly independent. Let us denote the subset of such norms by $\Banach$. 
\end{defin}
Let us point out that $\Banach$ is a $G_\delta$ subset $\Banach_0$, thus a Polish space of its own. Indeed, it suffices to check that for $\Norm\in \Banach_0$ we have $\Norm\in\Banach$ if and only if for every fixed $n\in\Nat$ the elements $e_1,\ldots,e_n$ are $\Rea$-linearly independent in $(c_{00},\Norm)$, which is an open subset of $\Banach_0$. We show that the complement $C$, the set of those norms in $\Banach_0$ for which the elements $e_1,\ldots,e_n$ are $\Rea$-linearly dependent, is closed in $\Banach_0$. Let $(\Norm_m)_{m\in\Nat}\subseteq C$ converge to $\Norm$. We show that $\Norm\in C$. For each $m\in\Nat$ there are $\alpha^m_1,\ldots,\alpha^m_n\in\Rea$, not all of them zero, such that $\|\sum_{i=1}^n \alpha^m_i e_i\|_m=0$. Without loss of generality, we may assume that $M_m=\max\{|\alpha^m_1|,\ldots,|\alpha^m_n|\}=1$. By passing to a subsequence if necessary, we may therefore assume that each $\alpha^m_i$ converges to some $\alpha_i$, for $i\leq n$, where at least one of the limits is non-zero. It follows that $x=\sum_{i=1}^n \alpha_i e_i\neq 0$ and $\|x\|=0$, showing that $\Norm\in C$.

\begin{remark} Each norm $\nu\in\Banach$ is then a code for an infinite-dimensional Banach space $X_\nu$ which is the completion of $(V,\nu)$. The completion is naturally a complete normed space over $\Rea$. This is the same as taking the canonical extension of $\nu$ to $c_{00}$ and then taking the completion.

We refer the reader to another paper of the authors, \cite{CDDK}, where the space $\Banach$ is thoroughly investigated from the topological point of view, which is however not so relevant for our considerations here, where we are merely satisfied with the fact that $\Banach$ is a standard Borel space.

Hence, we may refer to the set $\Banach$ as to the standard Borel space of all infinite-dimensional separable Banach spaces. Another possible approach, introduced by Bossard \cite{Bos02}, is to view all infinite-dimensional separable Banach spaces as the space $SB(X)$ of all closed linear infinite-dimensional subspaces of a universal separable Banach space $X$; then it is a Borel subset of the Effros-Borel space $F(X)$, the interested reader is referred to the monograph \cite{dodos} for further information. Similarly as in the case of Polish metric spaces, those two approaches are equivalent which is witnessed by Theorem~\ref{theorem:SBiffBanach}. Let us note that even though this is quite a natural approach, the space $\Banach$ and Theorem~\ref{theorem:SBiffBanach} seem to be new.

It would be possible to get a coding of all separable Banach spaces, i.e. even finite-dimensional, if we considered the space of all pseudonorms on $V$. As in the case of Polish metric spaces, the Borel set of all finite-dimensional Banach spaces is not interesting from our point of view, so we will ignore it in the sequel.
\end{remark}

\begin{remark}If there is no danger of confusion, we write $X\in \Banach$ as a shortcut for ``$X$ is an infinite-dimensional separable Banach space''.
\end{remark}

\begin{thm}\label{theorem:SBiffBanach}For every universal separable Banach space $X$, there is a Borel isomorphism $\Theta$ between $\Banach$ and $SB(X)$ such that $\Theta(\nu)$ is isometric to $X_{\nu}$ for every $\nu\in\Banach$.
\end{thm}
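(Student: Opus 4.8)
The plan is to produce Borel injections in both directions that preserve the (surjective) linear isometry type and then invoke the Borel Schr\"{o}der--Bernstein theorem. Precisely, I would build a Borel injection $\Phi\colon\Banach\to SB(X)$ with $\Phi(\nu)$ linearly isometric to $X_\nu$, and a Borel injection $\Psi\colon SB(X)\to\Banach$ with $X_{\Psi(F)}$ linearly isometric to $F$. The Borel Schr\"{o}der--Bernstein theorem then yields a Borel isomorphism $\Theta\colon\Banach\to SB(X)$ which, by the usual construction, coincides with $\Phi$ on a Borel set $A\subseteq\Banach$ and with $\Psi^{-1}$ on $\Banach\setminus A$. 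For $\nu\in A$ we get $\Theta(\nu)=\Phi(\nu)$, isometric to $X_\nu$; for $\nu\notin A$ we have $\Psi(\Theta(\nu))=\nu$, so $\Theta(\nu)$ is isometric to $X_{\Psi(\Theta(\nu))}=X_\nu$. Thus it suffices to construct the two injections.

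For $\Psi$: by the Kuratowski--Ryll-Nardzewski selection theorem pick Borel maps $d_n\colon SB(X)\to X$ with $\{d_n(F):n\in\Nat\}$ dense in $F$. Since $F$ is infinite-dimensional and membership of a vector in the linear span of finitely many prescribed vectors is a Borel condition, one may, in a Borel manner, extract the greedily chosen subsequence $(\tilde d_n(F))_n$ -- keep $d_n(F)$ exactly when it does not lie in the span of the previously kept vectors. This subsequence is infinite, $\Rea$-linearly independent, and its span equals that of $\{d_n(F):n\}$, hence is dense in $F$. Put $\mu_F(\sum_i q_i e_i):=\|\sum_i q_i\tilde d_i(F)\|_X$; then $\mu_F\in\Banach$ and the completion $X_{\mu_F}$ is linearly isometric to $\overline{\Span}\{\tilde d_n(F):n\}=F$. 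Finally fix a Borel injection $r\colon SB(X)\to(1,2)$ (possible since $SB(X)$ is an uncountable standard Borel space) and set $\Psi(F):=r(F)\,\mu_F/\mu_F(e_1)$; multiplying a norm by a positive scalar does not change the isometry type of its completion, while $r(F)=\Psi(F)(e_1)$ can be recovered from $\Psi(F)$, so $\Psi$ is an injective Borel map of the required kind.

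For $\Phi$: as $X$ is universal it contains an isometric copy of $C(\Delta)\oplus_\infty C(\Delta)\cong C(\Delta)$, where $\Delta$ is the Cantor set, so it is enough to realize $X_\nu$ Borel-measurably and injectively inside $C(\Delta)\oplus_\infty C(\Delta)$. By a Borel version of the Banach--Mazur universality theorem (standard; cf.\ \cite{dodos}) there is a Borel assignment $\nu\mapsto T_\nu$ of isometric embeddings $T_\nu\colon X_\nu\to C(\Delta)$ -- one passes through the evaluation embedding of $X_\nu$ into $C(B_{X_\nu^*})$, with the (weak-star compact) dual unit ball $B_{X_\nu^*}$, and a Borel choice of continuous surjection $\Delta\to B_{X_\nu^*}$ -- and likewise a Borel family of isometric embeddings $S_\nu\colon X_\nu\to C(\Delta)$. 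Fix a Borel injection $\varepsilon\colon\Banach\to(0,1/2)$ and let $\Phi(\nu)$ be the graph $\{(T_\nu x,\varepsilon(\nu)S_\nu x):x\in X_\nu\}\subseteq C(\Delta)\oplus_\infty C(\Delta)$. Since $\varepsilon(\nu)<1$, the map $x\mapsto(T_\nu x,\varepsilon(\nu)S_\nu x)$ is a surjective linear isometry onto $\Phi(\nu)$, so $\Phi(\nu)$ is linearly isometric to $X_\nu$; and every nonzero $(v_1,v_2)\in\Phi(\nu)$ satisfies $\|v_2\|/\|v_1\|=\varepsilon(\nu)$, so $\varepsilon(\nu)$, hence $\nu$, is recoverable from $\Phi(\nu)$, making $\Phi$ injective. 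Borel measurability of $\Phi$ is routine.

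The selector argument, the two scalar encodings securing injectivity, and the Schr\"{o}der--Bernstein assembly are all routine; the one step that needs genuine care -- and the only one resting on a non-trivial external result -- is the Borel measurability of the universal embedding $\nu\mapsto T_\nu$, i.e.\ performing the classical isometric embedding into $C(\Delta)$ uniformly and Borel-measurably in the code $\nu$, which involves handling the dual ball $B_{X_\nu^*}$ in a Borel way.
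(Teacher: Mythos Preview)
Your proposal is correct and follows essentially the same strategy as the paper: construct Borel injections in both directions preserving the isometry type and then apply the Borel Schr\"oder--Bernstein theorem. The only differences are cosmetic---the paper first reduces to the particular space $X=C([0,1])\oplus_2 C([0,1])$ and uses slightly different injectivity encodings (scaling the first basis vector by $j(Z)\in[1,2]$ rather than the whole norm, and the $\oplus_2$ trick $(\gamma f,\sqrt{1-\gamma^2}\,f)$ rather than your $\oplus_\infty$ ratio trick), and it cites \cite[Lemma~2.4]{ku} for the Borel universal embedding where you sketch it directly.
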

\begin{proof}
First, let us observe that whenever $X$ and $Y$ are universal separable Banach spaces, there is a Borel isomorphism $\Phi$ between $SB(X)$ and $SB(Y)$ such that $\Phi(Z)$ is isometric to $Z$ for every $Z\in SB(X)$. Indeed, fix an isometry $i:X\to Y$. Then $SB(X)\ni Z\mapsto i(Z)\in SB(Y)$ defines a Borel injective map, let us call it $\Phi_1$, such that $Z$ is isometric to $\Phi_1(Z)$ for every $Z\in SB(X)$. Next, we find an analogous Borel injective map $\Phi_2:SB(Y)\to SB(X)$. Finally, using the usual proof of the Cantor-Bernstein Theorem (see e.g. \cite[Theorem 15.7]{Ke}), we find a Borel isomorphism $\Phi$ between $SB(X)$ and $SB(Y)$ whose graph lies in the union of the graph of $\Phi_1$ and the inverse of the graph of $\Phi_2$. 

Hence, we may without loss of generality assume that $X = C([0,1])\oplus_2 C([0,1])$. Using the classical Kuratowski-Ryll-Nardzewski principle (see e.g. \cite[Theorem 1.2]{dodos}), we easily get a sequence of Borel maps $d_n:SB(X)\to X$ such that for every $Z\in SB(X)$ the sequence $(d_n(Z))_{n=1}^\infty$ is normalized, linearly independent and linearly dense in $Z$. Indeed, using the Kuratowski-Ryll-Nardzewski principle we find $f_n:SB(X)\to X$ such that $(f_n(Z))_{n=1}^\infty$ is dense in $Z$ for every $Z\in SB(X)$, then we inductively find a sequence $(n_k(Z))_{k\in \Nat}$ satisfying 
\[n_1(Z) = \min\{n\in\Nat\colon f_n(Z)\neq 0\},\] \[n_{k+1}(Z)=\min\{n\in\en\colon f_{n_1(Z)}(Z),\ldots,f_{n_k(Z)}(Z),f_n(Z) \text{ are lin.  independent}\}\]
and finally we put $d_k(Z):=\frac{f_{n_k(Z)}(Z)}{\|f_{n_k(Z)}(Z)\|}$ for $k\in\Nat$, which is easily seen to define a Borel map. Since all uncountable Polish metric spaces are Borel isomorphic, we may pick a Borel isomorphism $j$ between $SB(X)$ and the interval $[1,2]$. Now, we define a Borel injective map $\Theta_1:SB(X)\to \Banach$ by putting for every $Z\in SB(X)$
\[
\Theta_1(Z)(\alpha) = \bigg\|j(Z)\alpha_1 d_{1}(Z) + \sum_{i=2}^\infty \alpha_i d_{i}(Z)\bigg\|,\quad \alpha\in V.
\]
Then $\Theta_1$ is an injective Borel map from $SB(X)$ into $\Banach$ such that $X_{\Theta_1(Z)}$ is isometric to $Z$ for every $Z\in SB(X)$.

Next, by \cite[Lemma 2.4]{ku}, there is a Borel map $\widetilde{\Theta_2}:\Banach\to SB(C([0,1]))$ such that $\widetilde{\Theta_2}(\nu)$ is isometric to $X_\nu$ for every $\nu\in \Banach$. Pick a Borel isomorphism $\gamma$ between $\Banach$ and the interval $[0,1]$ and for every $\nu\in \Banach$ define $\Theta_2(\nu)$ as the Banach space of all $(\gamma(\nu)f,\sqrt{1-\gamma}^2(\nu)f)\in X$ where $f\in \widetilde{\Theta_2}(\nu)$. Then $\Theta_2$ is an injective Borel map from $\Banach$ into $SB(X)$ such that $\Theta_2(\nu)$ is isometric to $X_\nu$ for every $\nu\in\Banach$.

Finally, using the usual proof of the Cantor-Bernstein Theorem (see e.g. \cite[Theorem 15.7]{Ke}), we find a Borel isomorphism $\Theta$ between $\Banach$ and $SB(X)$ whose graph lies in the union of the graph of $\Theta_2$ and the inverse of the graph of $\Theta_1$.
\end{proof}

\subsection{Distances between metric spaces and Banach spaces}\label{section:examplesOfDistances}

\subsubsection{Gromov-Hausdorff distance}
We start with the notion of Gromov-Hausdorff distance which has been investigated already in \cite{CDKpart1}. For the convenience of the reader, we repeat some facts about this distance here.

\begin{defin}[Gromov-Hausdorff distance]\label{defin:GHdistance}
Let $(M,d_M)$ be a metric space and $A,B\subseteq M$ two non-empty subsets. The \emph{Hausdorff distance} between $A$ and $B$ in $M$, $\rho_H^M(A,B)$, is defined as
\[
\max\Big\{\sup_{a\in A} d_M(a,B),\sup_{b\in B} d_M(b,A)\Big\},
\]
where for an element $a\in M$ and a subset $B\subseteq M$, $d_M(a,B)=\inf_{b\in B} d_M(a,b)$.

Suppose now that $M$ and $N$ are two metric spaces. Their \emph{Gromov-Hausdorff distance}, $\rho_{GH}(M,N)$, is defined as the infimum of the Hausdorff distances of their isometric copies contained in a single metric space, that is
\[
\rho_{GH}(M,N)=\inf_{\substack{\iota_M:M\hookrightarrow X\\ \iota_N: N\hookrightarrow X}} \rho_H^X(\iota_M(M),\iota_N(N)),
\]
where $\iota_M$ and $\iota_N$ are isometric embeddings into a metric space $X$.

For two metrics $f, g \in \Met$ we denote by $\rho_{GH}(f,g)$ the Gromov-Hausdorff distance between $(\en,f)$ and $(\en,g)$, which is easily seen to be equal to the Gromov-Hausdorff distance between their completions $M_f$ and $M_g$.
\end{defin}

Let $A$ and $B$ be two sets. A \emph{correspondence} between $A$ and $B$ is a binary relation $\Corr\subseteq A\times B$ such that for every $a\in A$ there is $b\in B$ such that $a\Corr b$, and for every $b\in B$ there is $a\in A$ such that $a\Corr b$.

\begin{fact}[see e.g. Theorem 7.3.25. in \cite{BBI}]\label{fact:GHbyCorrespondences}
Let $M$ and $N$ be two metric spaces. For every $r>0$ we have $\rho_{GH}(M,N)< r$ if and only if there exists a correspondence $\Corr$ between $M$ and $N$ such that $\sup |d_M(m,m')-d_N(n,n')|< 2r$, where the supremum is taken over all $m,m'\in M$ and $n,n'\in N$ with $m\Corr n$ and $m'\Corr n'$.
\end{fact}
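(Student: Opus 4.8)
The plan is to establish both implications directly, through the notion of \emph{distortion} of a correspondence; the statement is classical (it is \cite[Theorem 7.3.25]{BBI}), so one could simply cite it, but a short self-contained argument is worthwhile. For a correspondence $\Corr$ between $M$ and $N$ write
$\operatorname{dis}(\Corr):=\sup\{|d_M(m,m')-d_N(n,n')|\setsep m\Corr n,\ m'\Corr n'\}$,
so that the assertion to be proved is that $\rho_{GH}(M,N)<r$ if and only if there is a correspondence $\Corr$ between $M$ and $N$ with $\operatorname{dis}(\Corr)<2r$.

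For the implication ``$\Rightarrow$'', fix a metric space $X$ and isometric embeddings of $M$ and $N$ into $X$ with Hausdorff distance less than $r$; identifying $M$ and $N$ with their images we may assume $M,N\subseteq X$ and $\rho_H^X(M,N)=:r'<r$. Choose $t\in(r',r)$ and put $\Corr:=\{(m,n)\in M\times N\setsep d_X(m,n)<t\}$. Since $d_X(m,N)\le r'<t$ for every $m\in M$ and $d_X(n,M)\le r'<t$ for every $n\in N$, the relation $\Corr$ is a correspondence, and whenever $m\Corr n$ and $m'\Corr n'$ the triangle inequality in $X$ gives $|d_M(m,m')-d_N(n,n')|=|d_X(m,m')-d_X(n,n')|\le d_X(m,n)+d_X(m',n')<2t$, whence $\operatorname{dis}(\Corr)\le 2t<2r$.

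For ``$\Leftarrow$'', let $\Corr$ be a correspondence with $\operatorname{dis}(\Corr)<2r$ and fix $\delta$ with $\tfrac12\operatorname{dis}(\Corr)\le\delta<r$ and $\delta>0$ (possible since $\operatorname{dis}(\Corr)<2r$ and $r>0$). On the disjoint union $X:=M\sqcup N$ define a function $d$ that restricts to $d_M$ on $M$, to $d_N$ on $N$, and satisfies $d(m,n):=\delta+\inf\{d_M(m,\tilde m)+d_N(\tilde n,n)\setsep \tilde m\Corr\tilde n\}$ for $m\in M$, $n\in N$. The heart of the matter — and the step I expect to be the most calculation-heavy, though entirely routine — is to verify that $d$ is a metric on $X$: symmetry and the bound $d(m,n)\ge\delta>0$ are immediate, while the triangle inequality splits into cases according to how many of the three points lie in $M$; the only substantial cases are those in which the two endpoints lie in one of $M,N$ and the middle point in the other, and there one combines the triangle inequalities of $d_M$ and $d_N$ with the estimate $|d_M(\tilde m,\tilde m')-d_N(\tilde n,\tilde n')|\le\operatorname{dis}(\Corr)\le2\delta$. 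Once $d$ is known to be a metric, for each $m\in M$ one picks $n$ with $m\Corr n$ and reads off $d(m,n)\le\delta$ by taking $\tilde m=m$, $\tilde n=n$ in the infimum, and symmetrically for points of $N$; hence $\rho_H^X(M,N)\le\delta<r$, and therefore $\rho_{GH}(M,N)\le\delta<r$.
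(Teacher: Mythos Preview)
Your proof is correct and is essentially the standard argument from \cite[Theorem~7.3.25]{BBI}. Note that the paper does not supply its own proof of this fact; it merely cites the reference, so there is nothing further to compare.
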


It is easier to work with bijections instead of correspondences. One may wonder in which situations we may do so. Let us define the corresponding concept and prove some results in this direction. Those will be used later.

\begin{defin}
By $S_{\infty}$ we denote the set of all bijections from $\Nat$ to $\Nat$. For two metrics on natural numbers $ f, g \in \Met$ and $\varepsilon>0$, we consider the relation
\[
f \simeq_\varepsilon g \quad \Leftrightarrow \quad \exists \pi \in S_{\infty} \, \forall \{ n, m \} \in [\mathbb{N}]^{2} : |f(\pi(n), \pi(m)) - g(n, m)| \leq \varepsilon.
\]
We write $f\simeq g$ if $f\simeq_\varepsilon g$ for every $\varepsilon>0$.
\end{defin}

The following two observations are proved e.g. in \cite[Lemma 15 and Lemma 16]{CDKpart1}.

\begin{lemma}\label{lem:lehciImplikace}
For any two metrics on natural numbers $f, g\in \Met$ and any $\varepsilon>0$ we have $\rho_{GH}(f,g)\leq \varepsilon$ whenever $f\simeq_{2\varepsilon} g$.
\end{lemma}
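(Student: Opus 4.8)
The plan is to turn the permutation witnessing $f\simeq_{2\varepsilon}g$ into a correspondence and then quote Fact~\ref{fact:GHbyCorrespondences}. Fix $\pi\in S_\infty$ with $|f(\pi(n),\pi(m))-g(n,m)|\leq 2\varepsilon$ for every $\{n,m\}\in[\Nat]^2$, and let $\Corr\subseteq\Nat\times\Nat$ be the graph of $\pi^{-1}$, i.e.\ $\pi(n)\mathrel{\Corr}n$ for all $n\in\Nat$. Since $\pi$ is a bijection, $\Corr$ is a correspondence between $(\Nat,f)$ and $(\Nat,g)$; these are dense in the completions $M_f$, $M_g$, and $\rho_{GH}(f,g)$ is by definition the Gromov--Hausdorff distance of $(\Nat,f)$ and $(\Nat,g)$, so working with the dense sets is harmless.

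Next I would bound the distortion of $\Corr$. If $m'\mathrel{\Corr}m$ and $n'\mathrel{\Corr}n$, that is, $m'=\pi(m)$ and $n'=\pi(n)$, then $|f(m',n')-g(m,n)|=|f(\pi(m),\pi(n))-g(m,n)|$, which is at most $2\varepsilon$ when $m\neq n$ by the choice of $\pi$, and equals $0$ when $m=n$. Hence the supremum occurring in Fact~\ref{fact:GHbyCorrespondences} is $\leq 2\varepsilon$.

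Finally, for every $r>\varepsilon$ we have $2\varepsilon<2r$, so $\Corr$ is a correspondence of distortion strictly less than $2r$, and Fact~\ref{fact:GHbyCorrespondences} yields $\rho_{GH}(f,g)<r$. Letting $r\downarrow\varepsilon$ gives $\rho_{GH}(f,g)\leq\varepsilon$.

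There is no serious obstacle here: the only points needing a little care are the mismatch between the non-strict hypothesis ($\leq 2\varepsilon$) and the strict inequality in Fact~\ref{fact:GHbyCorrespondences} ($<2r$), dealt with by the limiting argument over $r>\varepsilon$, and the routine remark that a correspondence between dense subsets suffices because the Gromov--Hausdorff distance is unchanged under passing to completions.
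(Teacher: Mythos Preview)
Your proof is correct: turning the witnessing permutation into a correspondence and applying Fact~\ref{fact:GHbyCorrespondences} with a limit over $r>\varepsilon$ is exactly the standard argument. The paper itself does not give a proof of this lemma but merely cites \cite[Lemma~15]{CDKpart1}, where the same idea is used.
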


\begin{lemma}\label{lem:GHequivalenceUniformlyDiscrete}
Let $p>0$ be a real number. For any two metrics on natural numbers $f, g\in \Met_p$ we have $\rho_{GH}(f,g)=\inf\{r\setsep f\simeq_{2r} g\}$ provided that $\rho_{GH}(f,g)< p/2$.
\end{lemma}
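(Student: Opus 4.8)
The plan is to establish the identity as two inequalities, only one of which uses the hypothesis $\rho_{GH}(f,g)<p/2$. The inequality $\rho_{GH}(f,g)\le\inf\{r\setsep f\simeq_{2r}g\}$ is unconditional and immediate from Lemma~\ref{lem:lehciImplikace}: for every $r$ with $f\simeq_{2r}g$ that lemma yields $\rho_{GH}(f,g)\le r$, so $\rho_{GH}(f,g)$ bounds the set $\{r\setsep f\simeq_{2r}g\}$ from below.

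For the reverse inequality I would fix an arbitrary $r$ with $\rho_{GH}(f,g)<r<p/2$ and prove $f\simeq_{2r}g$; since the hypothesis $\rho_{GH}(f,g)<p/2$ guarantees that such $r$ exist and can be taken arbitrarily close to $\rho_{GH}(f,g)$, letting $r$ decrease to $\rho_{GH}(f,g)$ then gives $\inf\{r\setsep f\simeq_{2r}g\}\le\rho_{GH}(f,g)$. To produce the required permutation, apply Fact~\ref{fact:GHbyCorrespondences} to $M=(\en,f)$ and $N=(\en,g)$: from $\rho_{GH}(f,g)<r$ we obtain a correspondence $\Corr\subseteq\en\times\en$ with $|f(a,a')-g(b,b')|<2r$ whenever $a\Corr b$ and $a'\Corr b'$.

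The crucial point — and the only place uniform discreteness is used — is that this correspondence is forced to be the graph of a bijection. Indeed, if $a\Corr b$ and $a\Corr b'$ with $b\ne b'$, then taking $a'=a$ in the estimate gives $g(b,b')=|f(a,a)-g(b,b')|<2r<p$, contradicting $g\in\Met_p$; symmetrically, $a\Corr b$ and $a'\Corr b$ force $f(a,a')<2r<p$, hence $a=a'$. Combined with the totality conditions built into the definition of a correspondence, this shows that $\Corr$ is the graph of some $\phi\in S_\infty$. Putting $\pi:=\phi^{-1}$, the estimate above reads $|f(\pi(n),\pi(m))-g(n,m)|<2r$ for all $n,m\in\en$, so $f\simeq_{2r}g$.

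I do not anticipate a genuine obstacle here; the only care needed is bookkeeping — matching the direction of the correspondence with the quantifier order in the definition of $\simeq_{2r}$, and noting that it is precisely the hypothesis $\rho_{GH}(f,g)<p/2$ that lets us keep the threshold $2r$ strictly below $p$ (so that the discreteness argument applies) while still sending $r$ down to $\rho_{GH}(f,g)$.
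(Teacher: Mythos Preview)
Your argument is correct and is the natural proof. The paper itself does not supply a proof of this lemma, merely citing \cite[Lemma~16]{CDKpart1}; your approach --- extracting a correspondence from Fact~\ref{fact:GHbyCorrespondences} and observing that the $p$-separation forces it to be the graph of a bijection --- is exactly the expected one and almost certainly coincides with the cited argument.
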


Recall that a metric space is perfect if it does not have isolated points.

\begin{lemma}\label{lem:GHPerfectSpaces}
Let $f,g\in\Met$ define two perfect metric spaces.\\ Then $\rho_{GH}(f,g)=\inf\{r\setsep f\simeq_{2r} g\}$.
\end{lemma}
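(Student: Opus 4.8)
The plan is to prove the two inequalities separately. The inequality $\rho_{GH}(f,g)\le\inf\{r\setsep f\simeq_{2r}g\}$ is immediate from Lemma~\ref{lem:lehciImplikace}, which yields $\rho_{GH}(f,g)\le r$ whenever $f\simeq_{2r}g$; in particular, if $\rho_{GH}(f,g)=\infty$ then no $r$ satisfies $f\simeq_{2r}g$ and both sides of the claimed identity are $\infty$, so from now on I would assume $\rho_{GH}(f,g)<\infty$. For the reverse inequality it suffices to show that $f\simeq_{2r}g$ for every fixed $r>\rho_{GH}(f,g)$. Using Definition~\ref{defin:GHdistance} (so that $\rho_{GH}(f,g)=\rho_{GH}((\en,f),(\en,g))$) together with Fact~\ref{fact:GHbyCorrespondences}, I would fix a correspondence $\Corr\subseteq\en\times\en$ and an $\eta>0$ such that $|f(a,a')-g(b,b')|\le 2r-\eta$ whenever $a\Corr b$ and $a'\Corr b'$, and I would fix $\delta>0$ with $\delta<\eta/4$.

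The single place where the hypothesis that $M_f$ and $M_g$ are perfect enters is the observation that for every $x\in\en$ the set $\{y\in\en\setsep f(x,y)<\delta\}$ is infinite, and symmetrically for $g$. Indeed, a non-empty open subset $U$ of the perfect space $M_f$ is infinite (each $x\in U$ is non-isolated in $M_f$, hence, $U$ being a neighbourhood of $x$, there are points of $U$ distinct from and arbitrarily close to $x$), and $\en$ is dense in the non-empty open ball $B^{M_f}(x,\delta)$, whence $\en\cap B^{M_f}(x,\delta)=\{y\in\en\setsep f(x,y)<\delta\}$ is infinite.

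Next I would run a back-and-forth construction producing finite partial injections $\emptyset=\sigma_0\subseteq\sigma_1\subseteq\cdots$ from $\en$ to $\en$ obeying the invariant: for every $m\in\dom\sigma_k$ there is $(a,b)\in\Corr$ with $f(m,a)<\delta$ and $g(\sigma_k(m),b)<\delta$. At an odd step, let $m^*=\min(\en\setminus\dom\sigma_k)$; pick $b$ with $m^*\Corr b$ (left-totality of $\Corr$), then pick $n^*\in\{z\in\en\setsep g(z,b)<\delta\}\setminus\rng\sigma_k$ — possible since the set is infinite and $\rng\sigma_k$ is finite — and put $\sigma_{k+1}=\sigma_k\cup\{(m^*,n^*)\}$, with witness $(m^*,b)$ for the new pair. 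At an even step do the symmetric thing: $n^*=\min(\en\setminus\rng\sigma_k)$, pick $a$ with $a\Corr n^*$, pick $m^*\in\{y\in\en\setsep f(y,a)<\delta\}\setminus\dom\sigma_k$, and put $\sigma_{k+1}=\sigma_k\cup\{(m^*,n^*)\}$ with witness $(a,n^*)$. Both moves preserve the invariant (old witnesses are inherited) and keep $\sigma_{k+1}$ injective. By the standard bookkeeping, $\pi^{-1}:=\bigcup_k\sigma_k$ is a bijection of $\en$, and if $(a_i,b_i)$ denotes the witness of the pair $(m_i,\pi^{-1}(m_i))$, then
\begin{align*}
|f(m_1,m_2)-g(\pi^{-1}(m_1),\pi^{-1}(m_2))|
&\le f(m_1,a_1)+f(m_2,a_2)+|f(a_1,a_2)-g(b_1,b_2)|\\
&\qquad{}+g(b_1,\pi^{-1}(m_1))+g(b_2,\pi^{-1}(m_2))\\
&< 4\delta+(2r-\eta)<2r ,
\end{align*}
so $\pi$ witnesses $f\simeq_{2r}g$, which completes the argument.

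The only delicate point is the choice of invariant: it is essential to record that each matched pair is $\delta$-close to a pair of $\Corr$ \emph{in both coordinates simultaneously}. The more economical invariant ``$\sigma_k(m)$ is $\Corr$-related to some point $\delta$-close to $m$'' cannot be propagated, because a correspondence may collapse a small ball of $M_f$ onto a single point of $M_g$ all of whose other $\Corr$-partners lie far away. With the two-sided invariant the extension step always succeeds, and this is precisely where perfectness is used: the ball from which the new point is chosen is infinite, whereas the set of already-used points that must be avoided is finite.
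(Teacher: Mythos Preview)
Your proof is correct and follows essentially the same approach as the paper: both fix a correspondence witnessing $\rho_{GH}(f,g)<s<r$ and run a back-and-forth construction, using perfectness to guarantee an infinite supply of fresh points within any small ball. The only cosmetic difference is that you record a uniform two-sided invariant (each matched pair is $\delta$-close in both coordinates to a pair of $\Corr$), whereas the paper first tries to use an exact $\Corr$-related point and only perturbs when all such points are already used; the resulting estimates are the same.
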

\begin{proof}
By Lemma~\ref{lem:lehciImplikace}, $\rho_{GH}(f,g)\leq r$ whenever $f\simeq_{2r} g$. For the other inequality, suppose $\rho_{GH}(f,g)<r$ and fix $s$ with $\rho_{GH}(f,g) < s < r$. By  Fact \ref{fact:GHbyCorrespondences}, there is a correspondence $\Corr\subseteq \Nat\times \Nat$ witnessing that $\rho_{GH}(f,g)<s$. Now we recursively define a permutation $\pi\in S_\infty$. During the $(2n-1)$-th step of the recursion we ensure that $n$ is in the domain of $\pi$ and during the $2n$-th step we ensure that $n$ is in the range of $\pi$.

Pick an arbitrary $n\in\Nat$ such that $1\Corr n$ and set $\pi(1)=n$. If $n=1$ then we have ensured that $1$ is both in the domain and the range of $\pi$. If $n\neq 1$, then pick some $m\in\Nat$ such that $m\Corr 1$ and set $\pi(m)=1$. If the only integer $m$ with the property that $m\Corr 1$ is equal to $1$, which has been already used, we pick an arbitrary $m'\in\Nat$ that has not been used yet and such that $f(m,m')<r-s$. The existence of such $m'$ follows since $f$ is perfect. We set $\pi(m')=1$. In the general $(2n-1)$-th step we proceed analogously. If $n$ has not been added to the domain of $\pi$ yet we pick some $m$ that has not been added to the range of $\pi$ yet and such that $n\Corr m$. Then we may set $\pi(n)=m$. If there is no such $m$, we pick an arbitrary $m$ such that $n\Corr m$ and take an arbitrary $m'$ with $g(m,m')<r-s$ that has not been added to the range of $\pi$ yet and set $\pi(n)=m'$. The $2n$-th step is done analogously.

When the recursion is finished we claim that for every $n,m$ we have $|f(m,n)-g(\pi(m),\pi(n))|\leq 2r$ which is what we should prove. Suppose e.g. that $\pi(m)$, resp. $\pi(n)$ are such that there are $m'$, resp. $n'$ with $g(m',\pi(m))<r-s$ and $g(n',\pi(n))<r-s$, and $m\Corr m'$ and $n\Corr n'$. The other cases are treated analogously. Then by the choice of $\Corr$ we have
\[\begin{split}
|f(m,n)- g(\pi(m),\pi(n))| & \leq |f(m,n)-g(m',n')| + |g(m',n')-g(\pi(m),n')|\\
&  \quad +  |g(\pi(m),n')-g(\pi(m),\pi(n))| \\
& < 2s + g(m',\pi(m)) + g(n',\pi(n)) < 2r.
\end{split}\]
\end{proof}

\begin{remark}\label{rem:GHPerfectSpaces}Note that although Lemma~\ref{lem:GHPerfectSpaces} is stated only for countable dense subsets of perfect metric spaces, by transfinite recursion it can be proved also for the completions. That is, whenever perfect metric space $M$ and $N$ are such that $\rho_{GH}(M,N)<K$, there exists a bijection $\varphi:M\to N$ such that $|d_M(x,y) - d_N(\varphi(x),\varphi(y))|<2K$ for every $x,y\in M$.
\end{remark}

\begin{remark}
If $f,g\in\Met$ define neither perfect metric spaces, nor do they belong to $\Met_p$, for some $p>0$, then $\simeq_\varepsilon$ does not give good estimates for the Gromov-Hausdorff distance between $f$ and $g$. Consider e.g. $\Nat$ as a metric space with its standard metric and a metric space $C_k=\{m+1/n\setsep m\in\Nat, n\geq k\}\subseteq \Rea$, $k\geq 2$, with a metric inherited from $\Rea$. We have $\rho_{GH}(\Nat,C_k)\to 0$ as $k\to \infty$, but clearly there are no bijections between $\Nat$ and $C_k$ witnessing the convergence.
\end{remark}

\subsubsection{Kadets distance}

\begin{defin}[Kadets distance]\label{defin:Kadetsdistance}
Suppose that $X$ and $Y$ are two Banach spaces. Their \emph{Kadets distance}, $\rho_K(X,Y)$, is defined as the infimum of the Hausdorff distances of their unit balls over all isometric linear embeddings of $X$ and $Y$ into a common Banach space $Z$. That is \[
\rho_K(X,Y)=\inf_{\substack{\iota_X: X\hookrightarrow Z\\ \iota_Y: Y\hookrightarrow Z}} \rho_H^Z(\iota_X(B_X),\iota_Y(B_Y)),
\]
where $\iota_X$ and $\iota_Y$ are linear isometric embeddings into a Banach space $Z$.
\end{defin}

Similarly as the Gromov-Hausdorff distance, the Kadets distance may be expressed in terms of correspondences. First, call a subset $A\subseteq X$ of a real vector space \emph{$\Rat$-homogeneous} if it is closed under scalar multiplication by rationals. The following lemma generalizes \cite[Theorem 2.3]{KalOst}, which uses homogeneous maps. The proof is however very similar.

\begin{lemma}\label{lem:KadetsDescription}
Let $X$ and $Y$ be Banach spaces and $E$ and $F$ be some dense $\Rat$-homogeneous subsets of $X$ and $Y$ respectively. Then we have $\rho_K(X,Y)<\varepsilon$ if and only if there exist $\delta\in(0,\varepsilon)$ and a $\Rat$-homogeneous correspondence $\Corr\subseteq E\times F$ with the property that for every $x\in E$ there is $y\in F$ with $x\Corr y$ and $\|y\|_Y\leq \|x\|_X$, for every $y\in F$ there is $x\in E$ with $x\Corr y$ and $\|x\|_X\leq \|y\|_Y$, and 
\[\begin{split}
\Bigg| \Big\|\sum_{i\leq n} x_i\Big\|_X-\Big\|\sum_{i\leq n} y_i\Big\|_Y\Bigg|\leq (\varepsilon-\delta)\Big(\sum_{i\leq n} \max\{\|x_i\|_X,\|y_i\|_Y\}\Big)
\end{split}\]
for all $(x_i)_i\subseteq E$ and $(y_i)_i\subseteq F$, where for all $i$, $x_i\Corr y_i$.
\end{lemma}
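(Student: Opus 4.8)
The plan is to prove the two implications separately, in each case passing through a Banach space $Z$ that (almost) realizes the Kadets distance: for the forward direction I read a correspondence off such a $Z$, and for the converse I build $Z$ out of the given correspondence by a gluing construction in the style of \cite{KalOst}. The statement is the Kadets-distance analogue of Fact~\ref{fact:GHbyCorrespondences}.

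\emph{From $\rho_K(X,Y)<\varepsilon$ to a correspondence.} Fix a Banach space $Z$ and linear isometric embeddings $\iota_X\colon X\hookrightarrow Z$, $\iota_Y\colon Y\hookrightarrow Z$ with $\rho_H^Z(\iota_X(B_X),\iota_Y(B_Y))<\eta$ for some $\eta<\varepsilon$, pick $\delta\in(0,\varepsilon-\eta)$, and identify $X,Y$ with their images in $Z$. Declare, for $x\in E$, $y\in F$,
\[
x\,\Corr\,y\quad:\Longleftrightarrow\quad \|x-y\|_Z\le(\varepsilon-\delta)\max\{\|x\|_X,\|y\|_Y\}.
\]
This relation is $\Rat$-homogeneous by inspection, and from $\bigl|\,\|\sum_{i\le n}x_i\|_X-\|\sum_{i\le n}y_i\|_Y\,\bigr|\le\sum_{i\le n}\|x_i-y_i\|_Z$ together with the defining estimate one obtains exactly the third displayed inequality. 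For the correspondence clauses take $0\ne x\in E$: since $x/\|x\|_X\in B_X$ there is $y'\in B_Y$ with $\|\iota_X(x/\|x\|_X)-\iota_Y(y')\|_Z<\eta$; using denseness of $F$ one replaces a slight contraction of $y'$ by some $y\in F$ with $\|y\|_Y<1$ that is still close to $x/\|x\|_X$ in $Z$, and then multiplies $y$ by a rational $q<\|x\|_X$ sufficiently close to $\|x\|_X$ (legitimate since $F$ is $\Rat$-homogeneous) to get $qy\in F$ with $\|qy\|_Y<\|x\|_X$ and $\|x-qy\|_Z\le(\varepsilon-\delta)\|x\|_X$, i.e.\ $x\,\Corr\,qy$; the value $x=0$ is covered by $0\,\Corr\,0$, and the $F$-side is symmetric.

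\emph{From a correspondence to $\rho_K(X,Y)<\varepsilon$.} On the algebraic direct sum $X\oplus Y$ define
\[
\|(x,y)\|_Z=\inf\Bigl\{\|x_0\|_X+\|y_0\|_Y+(\varepsilon-\delta)\sum_{i=1}^n\max\{\|a_i\|_X,\|b_i\|_Y\}\Bigr\},
\]
the infimum over all $n\ge0$ and all decompositions $x=x_0+\sum_{i\le n}a_i$, $y=y_0-\sum_{i\le n}b_i$ with $a_i\in E$, $b_i\in F$ and $a_i\,\Corr\,b_i$. Taking $n=0$ shows this is finite and $\le\|x\|_X+\|y\|_Y$; concatenating decompositions shows it is subadditive; dividing a decomposition by a positive rational again gives an admissible decomposition, so it is $\Rat$-homogeneous, and being $1$-Lipschitz for $(x,y)\mapsto\|x\|_X+\|y\|_Y$ it is then automatically $\Rea$-homogeneous, hence a seminorm. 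The key point is $\|(x,0)\|_Z=\|x\|_X$ and $\|(0,y)\|_Z=\|y\|_Y$: ``$\le$'' is immediate, and ``$\ge$'' follows by applying the third hypothesis in the form $\|\sum a_i\|_X\le\|\sum b_i\|_Y+(\varepsilon-\delta)\sum\max\{\|a_i\|_X,\|b_i\|_Y\}$ to a decomposition of $(x,0)$, where necessarily $y_0=\sum b_i$. Quotienting by the kernel of $\|\cdot\|_Z$ and completing yields a Banach space $\widehat Z$ with linear isometric embeddings $x\mapsto[(x,0)]$ and $y\mapsto[(0,y)]$. Finally, if $x\in E$ with $\|x\|_X\le1$ and $y\in F$ is chosen with $x\,\Corr\,y$ and $\|y\|_Y\le\|x\|_X$, the one-term decomposition $(x,-y)=(0,0)+(x,-y)$ gives $\|(x,-y)\|_Z\le(\varepsilon-\delta)\|x\|_X\le\varepsilon-\delta$; by denseness of $E$ (after contracting so as to stay inside $B_X$) every point of $\iota_X(B_X)$ then lies within $\varepsilon-\delta$ of $\iota_Y(B_Y)$, and symmetrically, so $\rho_H^{\widehat Z}(\iota_X(B_X),\iota_Y(B_Y))\le\varepsilon-\delta<\varepsilon$ and hence $\rho_K(X,Y)<\varepsilon$.

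The point requiring genuine care — and the reason the lemma is stated with dense \emph{$\Rat$}-homogeneous sets — is the mismatch between the merely rational homogeneity of $\Corr$ and the real rescalings one is forced to perform (normalizing to the sphere in the first part, keeping elements inside unit balls in the second). Both times this is absorbed by approximating the required real scalar by a rational one and using denseness of $E$ and $F$ to return into these sets; one must check that the $(\varepsilon-\delta)$-estimates survive these perturbations, and that $\Rat$-homogeneity of $\|\cdot\|_Z$ upgrades to $\Rea$-homogeneity via its Lipschitz continuity. The remaining ingredients — subadditivity of $\|\cdot\|_Z$, the elementary $\max$-estimates, and extracting the Hausdorff bound — are routine.
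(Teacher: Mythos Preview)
Your proof is correct and follows essentially the same approach as the paper's: in the forward direction you define $\Corr$ via $\|x-y\|_Z\le(\varepsilon-\delta)\max\{\|x\|_X,\|y\|_Y\}$ in a common ambient space, and for the converse you build the gluing seminorm on a direct sum exactly as the paper does (the paper works on $\operatorname{span}E\oplus\operatorname{span}F$ rather than $X\oplus Y$, and does not bother to quotient, but these are cosmetic differences). If anything, you are more careful than the paper in verifying the correspondence clauses in the forward direction and in justifying $\Rea$-homogeneity of $\|\cdot\|_Z$.
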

\begin{proof}
If $\rho_K(X,Y)<\varepsilon$, then fix some $\delta\in (0,\varepsilon - \rho_K(X,Y))$ and some isometric embeddings of $X$ and $Y$ into a Banach space $Z$ such that $\rho^Z_H(B_X,B_Y)<\varepsilon-\delta$. Then set $x\Corr y$, for $x\in E$ and $y\in F$, if and only if $\|x-y\|_Z\leq \max\{(\varepsilon-\delta)\|x\|_X,(\varepsilon-\delta)\|y\|_Y\}$.

\medskip
Suppose conversely that we have such $\delta\in (0,\varepsilon)$ and $\Corr\subseteq E\times F$. Set $E'$ to be the linear span of $E$, analogously $F'$ to be the linear span of $F$. Then set $Z=E'\oplus F'$ and define a norm $\Norm_Z$ on $Z$ as follows: for $(x,y)\in Z$ set 
\[\begin{split}
\|(x,y)\|_Z = \inf\bigg\{ & \|x_0\|_X+\|y_0\|_Y+(\varepsilon-\delta)\Big(\sum_{i\leq n} \max\{\|x_i\|_X,\|y_i\|_Y\}\Big)\setsep\\
& x=x_0+\sum_{i\leq n} x_i,\; y=y_0-\sum_{i\leq n} y_i, x_0\in E', y_0\in F',\; x_i\Corr y_i\bigg\}.
\end{split}\]

It is clear that $\Norm_Z$ satisfies the triangle inequality. Moreover, it is easy to check, using the $\Rat$-homogeneity of $\Corr$, that $\Norm_Z$ is $\Rat$-homogeneous, i.e. for every $z\in Z$ and $q\in\Rat$, $\|qz\|_Z=|q|\|z\|_Z$. By continuity, we also get the full homogeneity for all real scalars.

Let us check that for any $x\in E'$ we have $\|(x,0)\|_Z=\|x\|_X$. Clearly, $\|(x,0)\|_Z\leq \|x\|_X$. In order to prove the other inequality, pick $x_0\in E'$, $x_1,\ldots,x_n\in E$, $y_0\in F'$, $y_1,\ldots,y_n\in F$, $x=x_0+\sum_{i\leq n} x_i$, $y=0=y_0-\sum_{i\leq n} y_i$ and $x_i\Corr y_i$. By the assumptions, we have 
\[\begin{split}
\|x_0\|_X & +\|y_0\|_Y+(\varepsilon-\delta)\Big(\sum_{i\leq n} \max\{\|x_i\|_X,\|y_i\|_Y\}\Big)  = \\
& =  \|x_0\|_X + \Big\|\sum_{i\leq n} y_i\Big\|_Y+
(\varepsilon-\delta)\Big(\sum_{i\leq n} \max\{\|x_i\|_X,\|y_i\|_Y\}\Big)\geq\\
& \geq \|x_0\|_X+\Big\|\sum_{i\leq n} x_i\Big\|_X\geq \|x\|_X,
\end{split}\]
as desired. So, the inequality $\|(x,0)\|_Z\geq \|x\|_X$ is proved. Analogously, we show that for every $y\in F'$ we have $\|y\|_Y=\|(0,y)\|_Z$. So $E'$ and $F'$ are isometrically embedded into $Z$. Now for any $x\in B_X\cap E$ by the assumption there is $y\in F$ such that $\|y\|_Y\leq \|x\|_X$ and $x\Corr y$. So $$\|(x,-y)\|_Z\leq (\varepsilon-\delta)\|x\|_X$$ since $x$ can be written as $x_0+x$, where $x_0=0$, and $-y$ as $y_0-y$, where $y_0=0$. Analogously, for every $y\in B_Y\cap F$ there is $x\in E$ such that $\|x\|_X\leq \|y\|_Y$ and $\|(x,-y)\|_Z\leq (\varepsilon-\delta)\|y\|_Y$. Finally we take the completion of $Z$ and get a Banach space $Z'$ to which $X$ and $Y$ linearly isometrically embed so that $\rho_H^{Z'}(B_X,B_Y)<\varepsilon$.
\end{proof}

\subsubsection{Lipschitz distance}

\begin{defin}[Lipschitz distance]\label{defin:LispchitzDistance}
Let $M$ and $N$ be two metric spaces. Their \emph{Lipschitz distance} is defined as
\[
\rho_L(M,N) = \inf\big\{\log\max\{\Lip(T),\Lip(T^{-1})\}\setsep T:M\rightarrow N\text{ is bi-Lipschitz bijection}\big\},
\]
where 
\[
\Lip(T)=\sup_{m\neq n\in M}\frac{d_N(T(m),T(n))}{d_M(m,n)}
\]
is the Lipschitz norm of $T$.
\end{defin}
\begin{remark}\label{rem:lip}
The previous definition of the Lipschitz distance is from \cite[Definition 7.2.1]{BBI}. We note that Gromov in \cite[Definition 3.1]{Gro} defines the Lipschitz distance (between $M$ and $N$) as 
\[
\inf\big\{|\log \Lip(T)|+|\log \Lip(T^{-1})|\setsep T:M\rightarrow N\text{ is bi-Lipschitz}\big\}.
\]
Nevertheless, one can easily check that these two definitions give equivalent distances. Indeed, if we denote by $\rho'_L$ the Lipschitz distance in the sense of Gromov, then we easily see that $$\rho_L\leq \rho'_L\leq 2\rho_L.$$

More differently, Dutrieux and Kalton in \cite{DuKa} define the Lipschitz distance analogously to the definition of the Banach-Mazur distance, which we recall later, as  \footnote{More precisely, they define it without the logarithm which we add in order to satisfy the triangle inequality.}
\[
\inf\big\{\log \Lip(T)\Lip(T^{-1})\setsep T:M\rightarrow N\text{ is bi-Lipschitz}\big\}.
\]
Denote this distance by $\rho''_L$. Clearly, $\rho''_L$ is not equivalent with $\rho_L$ since for example the intervals $[0,1]$ and $[0,2]$ have distance zero only in $\rho''_L$. However, in \cite{DuKa} the authors work mainly with Banach spaces and if $M$ and $N$ are Banach spaces, it is easy to see that we have $\rho''_{L}(M,N)=\rho'_{L}(M,N)$. That follows from the fact that we may consider only those bi-Lipschitz maps such that both $\log \Lip(T)$ and $\log \Lip(T^{-1})$ are non-negative. Indeed, if say $\Lip(T)<1$, then we define $T'=T/\Lip(T)$ and we get $\log \Lip(T)+\log \Lip(T^{-1})=|\log \Lip(T')|+|\log \Lip((T')^{-1})|$.

However, for Banach spaces we have $\rho'_L(M,N)= 2\rho_L(M,N)$. This again follows after the appropriate rescaling of the maps $T:M\rightarrow N$.
\end{remark}
One of the differences between the Gromov-Hausdorff distance and the Lipschitz distance on metric spaces is that for the former if $M$ and $N$ are metric spaces and $M'$, resp. $N'$ their dense subsets, then $\rho_{GH}(M,N)=\rho_{GH}(M',N')$. That an analogous equality does not hold for the Lipschitz distance is witnessed by the following fact. We thank Benjamin Vejnar for providing us an example on which it is based.
\begin{fact}\label{fact:LipchitzNotWitnessedOnDenseSubsets}
There exist metrics $d_M,d_N\in\Met$ on $\Nat$ such that their completions are isometric, however there is no bi-Lipschitz map between $(\Nat,d_M)$ and $(\Nat,d_N)$.
\end{fact}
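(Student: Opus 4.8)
The plan is to build two countable dense subsets of a single compact (or at least complete separable) metric space $M$ whose induced metrics $d_M, d_N$ on $\Nat$ are not bi-Lipschitz equivalent, even though trivially the completions coincide (being the same space $M$). The obstruction to bi-Lipschitz equivalence should come from \emph{local} geometry: near a single accumulation point the two dense sets approach it at radically different speeds, and a bi-Lipschitz bijection of the \emph{dense} subsets (not of the completions!) must respect the ratio of distances, which forces a contradiction. Since a bijection between the two countable dense sets need not extend to the completion, this is consistent with $\rho_{GH}(d_M,d_N)=0$.

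\textbf{The construction.} I would take $M$ to be a convergent sequence together with its limit, say $M=\{0\}\cup\{2^{-k}\setsep k\in\Nat\}\subseteq\Rea$ — but this is countable, so instead I take $M$ to be a space with one accumulation point of continuum character, e.g. $M=\{0\}\cup([1,2]\times\{0\})$ scaled into clusters, or more cleanly: let $M = \{0\}\cup\bigcup_{k\in\Nat}\big(\{1/k\}\times C_k\big)$ where each $C_k$ is a copy of a fixed Cantor-type set of diameter $\ll 1/k^2$, glued so that $0$ is the unique non-isolated point sitting at the limit of the clusters. Now pick inside $M$ two countable dense sets $D_M$ and $D_N$: in $D_M$ the points chosen from the $k$-th cluster all lie within distance $4^{-k}$ of the cluster's ``center'' $1/k$, while in $D_N$ the $k$-th cluster's points are spread out to fill essentially the whole diameter of $C_k$. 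Enumerate $D_M$ and $D_N$ by $\Nat$ to get $d_M,d_N\in\Met$; their completions are both equal to $M$, hence isometric.

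\textbf{Ruling out a bi-Lipschitz bijection.} Suppose $T\colon(\Nat,d_M)\to(\Nat,d_N)$ were bi-Lipschitz with constant $L$. Identify it with a bijection $D_M\to D_N$. The key observation is that $T$ must, up to the constant $L$, preserve the ``cluster structure'': points of $D_M$ at mutual distance $\le 4^{-k}$ but at distance $\approx 1/k$ from most other points can only be mapped to a set of points of $D_N$ of diameter $\le L\cdot 4^{-k}$, yet every genuine cluster of $D_N$ has diameter comparable to its $C_k$, which I arrange to shrink much more slowly than $4^{-k}$. By a pigeonhole/counting argument across infinitely many clusters one derives that $T$ cannot be injective, or that $\Lip(T)$ or $\Lip(T^{-1})$ is unbounded — contradiction. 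The main obstacle, and the step requiring real care, is choosing the diameters of the $C_k$ and the two nested families of dense subsets so that (i) both induced metrics genuinely lie in $\Met$ with the prescribed common completion, and (ii) the quantitative mismatch between ``cluster diameter in $D_M$'' and ``cluster diameter in $D_N$'' is robust enough that \emph{no} bijection — not just no cluster-preserving one — can be bi-Lipschitz; handling bijections that scramble points across different clusters is where the argument needs a clean invariant (e.g.\ for each point $x$, the infimum of distances to points outside a ball of a fixed radius), which I would extract and show is distorted by at most $L$ under $T$ yet takes incompatible value-sets on $D_M$ and $D_N$.
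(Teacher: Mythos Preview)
Your construction has a concrete flaw: you want both $D_M$ and $D_N$ to be dense in the \emph{same} space $M$, yet you also want the points of $D_M$ in the $k$-th cluster to lie within $4^{-k}$ of the center while the clusters themselves have diameters that ``shrink much more slowly than $4^{-k}$''. If $D_M$ only meets a ball of radius $4^{-k}$ inside a cluster of much larger diameter, then $D_M$ is not dense in that cluster, hence not dense in $M$, and its completion is not $M$. Conversely, if you force $D_M$ to be genuinely dense in each cluster, it must be spread out just as $D_N$ is, and the quantitative mismatch your argument relies on evaporates. The acknowledged difficulty with cluster-scrambling bijections is then moot, because the basic setup does not get off the ground.

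More importantly, you are overlooking the observation that collapses the whole problem. A bi-Lipschitz bijection between dense subsets of complete metric spaces extends, by uniform continuity of Lipschitz maps, to a bi-Lipschitz bijection of the completions. (Your parenthetical ``a bijection between the two countable dense sets need not extend to the completion'' is true for arbitrary bijections, but precisely false for bi-Lipschitz ones.) So if $D_M, D_N \subseteq M$ are both dense in a complete space $M$, any bi-Lipschitz bijection $T\colon D_N \to D_M$ extends to a bi-Lipschitz autohomeomorphism $\bar T$ of $M$. The paper's proof uses exactly this: take any Polish $M$ whose group $G$ of bi-Lipschitz autohomeomorphisms has a point $m$ with $M \setminus G\cdot m$ dense --- for instance $M=[0,1]$ and $m=0$, since every homeomorphism of $[0,1]$ permutes $\{0,1\}$, so $G\cdot 0 = \{0,1\}$. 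Choose $D_M$ dense in $M$ and disjoint from $G\cdot m$, and $D_N$ dense with $m \in D_N$. Then $\bar T(m) \in D_M \cap G\cdot m = \emptyset$, a contradiction. No cluster combinatorics are needed.
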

\begin{proof}
Let $M$ be a Polish metric space, let $G$ be the group of bi-Lipschitz autohomeomorphisms of $M$, and suppose there exists $m\in M$ such that $M\setminus G\cdot m$ is dense in $M$, where $G\cdot m$ is the orbit of $m$ under the action of $G$ on $M$. Let $(x_i)_i$ be some countable dense subset of $M$ such that $\{x_i\setsep i\in\Nat\}\cap G\cdot m=\emptyset$, and let $(y_j)_j$ be another countable dense subset of $M$ such that $y_1=m$. Then there is no bi-Lipschitz map between $(x_i)_i$ and $(y_j)_j$. Indeed, otherwise such a bi-Lipschitz map would extend to some bi-Lipschitz autohomeomorphism $g\in G$ and we would have $g\cdot y_1=g\cdot m=x_k$, for some $k\in\Nat$, which is a contradiction.

To give a simple concrete example, consider $M=[0,1]$ and $m=0$.
\end{proof}

It follows that we cannot in general for $d,p\in\Met$ decide whether $\rho_L(M_d,M_p)<\varepsilon$ just by computing $\rho_L((\Nat,d),(\Nat,p))$. For a correspondence $\Corr\subseteq \Nat^2$ and $n\in\Nat$ we denote by $n\Corr$ the set $\{m\in\Nat\setsep n\Corr m\}$ and by $\Corr n$ the set $\{m\in\Nat\setsep m\Corr n\}$. 
\begin{lemma}\label{lem:LipschitzBySequenceOfCorr}
Let $d,p\in\Met$. Then $\rho_L(M_d,M_p)<r$ if and only if there exists $r' < r$ and  a sequence of correspondences $\Corr_i\subseteq \Nat\times\Nat$ decreasing in inclusion such that
\begin{enumerate}
\item\label{eq:LipschitzBySequenceOfCorr-condition1} for every $\varepsilon > 0$ there exists $i\in\Nat$ such that for every $n\in\Nat$ we have $p\text{-}\diam (n\Corr_i) < \varepsilon$ and $d\text{-}\diam (\Corr_i n) < \varepsilon$;
\item\label{eq:LipschitzBySequenceOfCorr-condition2} for every $i\in\Nat$ and every $n,m,n',m'\in\Nat$ such that $d(n,m)\geq 2^{-i}$ and $n\Corr_i n'$ and $m\Corr_i m'$ we have $p(n',m')\leq \exp(r')d(n,m)$;
\item\label{eq:LipschitzBySequenceOfCorr-condition3} for every $i\in\Nat$ and every $n,m,n',m'\in\Nat$ such that $p(n,m)\geq 2^{-i}$ and $n'\Corr_i n$ and $m'\Corr_i m$ we have $d(n',m')\leq \exp(r')p(n,m)$.
\end{enumerate}
\end{lemma}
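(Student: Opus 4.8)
The plan is to prove the equivalence by moving back and forth between a bi-Lipschitz bijection of the completions $M_d$ and $M_p$ and the shrinking family $(\Corr_i)$, with the one subtle point being that the correspondences only ``see'' the dense copies of $\Nat$, so the map has to be reconstructed from them by a Cantor-intersection argument.

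\emph{From a bijection to correspondences.} Suppose $\rho_L(M_d,M_p)<r$ and fix a bi-Lipschitz bijection $T\colon M_d\to M_p$ with $s:=\log\max\{\Lip(T),\Lip(T^{-1})\}<r$; put $L:=\exp(s)$ (note $L\ge 1$), choose $r'$ with $s<r'<r$, and set $\delta_i:=2^{-i-1}(\exp(r')-L)/L$, a decreasing null sequence. Writing $p$ also for the metric of $M_p$, declare, for $n\in\Nat\subseteq M_d$ and $m\in\Nat\subseteq M_p$,
\[
n\,\Corr_i\,m\quad\Leftrightarrow\quad p(T(n),m)\le\delta_i.
\]
Density of $\Nat$ in $M_p$ makes every $n$ lie in the domain, and density of $\Nat$ in $M_d$ together with $\Lip(T)\le L$ makes every $m$ lie in the range, so each $\Corr_i$ is a correspondence, and $\delta_{i+1}<\delta_i$ gives $\Corr_{i+1}\subseteq\Corr_i$. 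Directly $p\text{-}\diam(n\Corr_i)\le 2\delta_i$, while $\Lip(T^{-1})\le L$ gives $d\text{-}\diam(\Corr_i n)\le 2L\delta_i$, which yields (1). If $d(n,m)\ge 2^{-i}$, $n\Corr_i n'$ and $m\Corr_i m'$, the triangle inequality gives
\[
p(n',m')\le 2\delta_i+L\,d(n,m)\le\bigl(L+2^{i+1}\delta_i\bigr)d(n,m)\le\exp(r')\,d(n,m),
\]
using $L\ge 1$ in the last step; the symmetric computation with $T^{-1}$ gives $d(n',m')\le(L+2^{i+1}L\delta_i)p(n,m)\le\exp(r')p(n,m)$ whenever $p(n,m)\ge 2^{-i}$. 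These are exactly (2) and (3).

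\emph{From correspondences to a bijection.} Conversely, given $r'<r$ and $(\Corr_i)$ as in the statement, observe that for each fixed $n\in\Nat$ the $p$-closures $\overline{n\Corr_i}\subseteq M_p$ form a decreasing sequence of nonempty closed sets (nonempty as $\Corr_i$ is a correspondence) whose diameters tend to $0$ by (1); since $M_p$ is complete, Cantor's intersection theorem gives $\bigcap_i\overline{n\Corr_i}=\{T(n)\}$ for a unique point $T(n)\in M_p$. Symmetrically let $S(m)$ be the unique point of $\bigcap_i\overline{\Corr_i m}$ in $M_d$. For $n\ne m$ in $\Nat$ pick $i$ with $2^{-i}\le d(n,m)$; by (2) we have $p(n',m')\le\exp(r')d(n,m)$ for all $n'\in n\Corr_i$ and $m'\in m\Corr_i$, and since $T(n)\in\overline{n\Corr_i}$, $T(m)\in\overline{m\Corr_i}$, passing to closures yields $p(T(n),T(m))\le\exp(r')d(n,m)$. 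Hence $T$ is Lipschitz with constant at most $\exp(r')$ on the dense subset $\Nat$ and extends to a map $T\colon M_d\to M_p$ with $\Lip(T)\le\exp(r')$; by (3), $S$ extends likewise to $S\colon M_p\to M_d$ with $\Lip(S)\le\exp(r')$.

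\emph{The extensions are mutually inverse.} It remains to see $S\circ T=\id_{M_d}$ and $T\circ S=\id_{M_p}$, for which, by density and continuity, it suffices to check the first on $\Nat$. Fix $n\in\Nat$ and choose $m_k\in n\Corr_k$ for each $k$. Since $T(n)\in\overline{n\Corr_k}$ and $p\text{-}\diam(n\Corr_k)\to 0$ (by (1)), we have $m_k\to T(n)$ in $M_p$, so $S(T(n))=\lim_k S(m_k)$ by continuity of $S$. But $n\Corr_k m_k$ means $n\in\Corr_k m_k$, while $S(m_k)\in\overline{\Corr_k m_k}$, so $d(n,S(m_k))\le d\text{-}\diam(\Corr_k m_k)\to 0$ (again by (1), whose bound is uniform in the second coordinate); hence $S(m_k)\to n$ and $S(T(n))=n$. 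Therefore $T$ is a bi-Lipschitz bijection with $\Lip(T),\Lip(T^{-1})\le\exp(r')$, whence $\rho_L(M_d,M_p)\le r'<r$. I expect the only real difficulty to be precisely this last circle of ideas: the correspondences live on $\Nat$ while the Lipschitz distance concerns the completions, and it is the uniformity over the free coordinate built into condition (1) that both makes the Cantor-intersection definitions of $T$ and $S$ legitimate and forces them to be genuine inverses.
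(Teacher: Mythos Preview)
Your proof is correct and follows essentially the same approach as the paper: in both directions the argument is the Cantor-intersection construction of $T$ and $S$ from the shrinking correspondences (with the verification that they are mutual inverses, which the paper leaves to the reader and you carry out), and in the forward direction the correspondences are defined via proximity to the image under the given bi-Lipschitz map. Your definition $n\Corr_i m \Leftrightarrow p(T(n),m)\le\delta_i$ is in fact a mild simplification of the paper's, which interposes an auxiliary point $\tilde n\in\Nat$; your version works because you observe $L\ge1$, which handles the slight asymmetry between conditions~(2) and~(3).
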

\begin{proof}
For the implication from the right to the left, for every $n\in\Nat$ we define $\phi(n)\in M_p$ and $\psi(n)\in M_d$ as the unique element of $\bigcap_i \overline{n\Corr_i}$ and $\bigcap_i \overline{\Corr_i n}$, respectively. We leave to the reader to verify the simple fact that $\phi:\Nat\rightarrow M_p$ is a Lipschitz map with Lipschitz constant less than $\exp(r)$, which therefore extends to a Lipschitz map $\bar \phi: M_d\rightarrow M_p$ with the same Lischitz constant, and if $\bar \psi$ is defined analogously, then $\bar \phi=(\bar \psi)^{-1}$

For the other implication, suppose that we are given a bi-Lipschitz map $\phi: M_d\rightarrow M_p$ such that $L:=\max\{\Lip(\phi),\Lip(\phi^{-1})\} < \exp(r)$ and pick $\varepsilon>0$ with $L+\varepsilon < \exp(r)$. For every $i\in\Nat$, put $\varepsilon_i:=\tfrac{\varepsilon}{i2^{i+1}(1+L)}$ and define a correspondence $\Corr_i$ by
\[
\Corr_i:=\big\{(n,n')\in\Nat\times\Nat\setsep \exists \tilde{n}\in\Nat\quad d(n,\tilde{n}) < \varepsilon_i\;\; \&\;\; p(\phi(\tilde{n}),n') < \varepsilon_i\big\}.
\]
We claim that the correspondences $(\Corr_i)_i$ are as desired. It is easy to see that $\Corr_i\subseteq \Nat\times\Nat$ are correspondences decreasing in inclusion and that \eqref{eq:LipschitzBySequenceOfCorr-condition1} is satisfied. We check condition \eqref{eq:LipschitzBySequenceOfCorr-condition2} and find the number $r'$, the condition \eqref{eq:LipschitzBySequenceOfCorr-condition3} is checked similarly. Fix some $i\in\Nat$ and $n,m,n',m'\in\Nat$ with $n\Corr_i n'$, $m\Corr_i m'$ and $d(n,m)\geq 2^{-i}$. Let $\tilde{n}$ and $\tilde{m}$ be natural numbers witnessing that $n\Corr_i n'$ and $m\Corr_i m'$, respectively. Then we have 
\[\begin{split}
p(n',m') & \leq 2\varepsilon_i + p(\phi(\tilde{n}),\phi(\tilde{m}))\leq 2\varepsilon_i + Ld(\tilde{n},\tilde{m})\\
& \leq 2\varepsilon_i + L(2\varepsilon_i + d(n,m)) = d(n,m)\left(L + \frac{2\varepsilon_i(1+L)}{d(n,m)}\right)\\
& \leq d(n,m)(L + 2^{i+1}\varepsilon_i(1+L)) = d(n,m)(L + \tfrac{\varepsilon}{i}),
\end{split}\]
so if we put $r' = \log(L + \varepsilon)$ we get that \eqref{eq:LipschitzBySequenceOfCorr-condition2} holds and $r' < r$.
\end{proof}

\subsubsection{Banach-Mazur distance}

\begin{defin}[Banach-Mazur distance]\label{defin:BanachMazur}
We recall that if $X$ and $Y$ are Banach spaces, their (logarithmic) \emph{Banach-Mazur distance} is defined as
\[
\rho_{BM}(X,Y) = \inf\big\{\log \|T\|\|T^{-1}\|\setsep T:X\rightarrow Y\text{ is a linear isomorphism}\big\}.
\]
\end{defin}
In contrast to the Lipschitz distance, Banach-Mazur distance can be verified just by looking at isomorphisms that are defined on some fixed countable dense linear subspaces over $\Rat$. That is made precise in the following lemma.\footnote{After proving the lemma, we were told by Gilles Godefroy that a similar statement is already in \cite{Gri03}}

\begin{lemma}\label{lem:BanachMazurVerifiableOnCountableSubspaces}
Let $X$ and $Y$ be separable Banach spaces, let $(e_n)_{n\in\Nat}$ and $(f_n)_{n\in\Nat}$ be linearly independent and linearly dense sequences in $X$ and $Y$, respectively, and put $V=\Rat\Span\{e_n\setsep n\in\Nat\}$, $W=\Rat\Span\{f_n\setsep n\in\Nat\}$.

Then $\rho_{BM}(X,Y)<r$ if and only if there exists a surjective linear isomorphism $T:X\rightarrow Y$ with $\log \|T\|\|T^{-1}\|<r$ and $T(V)=W$.
\end{lemma}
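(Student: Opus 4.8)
The implication ``$\Leftarrow$'' is immediate, since a $T$ as in the statement is in particular a surjective linear isomorphism of $X$ onto $Y$ with $\log\|T\|\,\|T^{-1}\|<r$. All the content is in ``$\Rightarrow$'', and the plan is a back-and-forth construction by rank-one perturbations that are summable \emph{in operator norm}. First fix a surjective linear isomorphism $T_0\colon X\to Y$ with $\log\|T_0\|\,\|T_0^{-1}\|<r$, and then a small $\varepsilon_0>0$ with $\|T_0^{-1}\|\varepsilon_0<1$ and $(\|T_0\|+\varepsilon_0)\cdot\frac{\|T_0^{-1}\|}{1-\|T_0^{-1}\|\varepsilon_0}<e^r$ (possible because $\|T_0\|\,\|T_0^{-1}\|<e^r$); set $\delta_s=\varepsilon_0 2^{-s-1}$. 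I would construct linear isomorphisms $T_s\colon X\to Y$ together with an increasing sequence of finite sets $S_0=\emptyset\subseteq S_1\subseteq\cdots\subseteq V$, writing $D_s:=\Span_\Rea(S_s)$, so that:
\begin{enumerate}
\item $T_s\restriction D_{s-1}=T_{s-1}\restriction D_{s-1}$ and $\|T_s-T_{s-1}\|<\delta_s$;
\item $T_s(u)\in W$ for every $u\in S_s$;
\item at each odd step $s$, one has $e_n\in S_s$ for the least $n$ with $e_n\notin\Rat\Span(S_{s-1})$;
\item at each even step $s$, one has $f_m=T_s(v)$ for some $v\in S_s$, where $m$ is the least index with $f_m\notin T_{s-1}(\Rat\Span(S_{s-1}))$.
\end{enumerate}

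Granting such a construction, the lemma follows. By (1) the $T_s$ are Cauchy in operator norm; the limit $T$ satisfies $\|T-T_0\|\le\sum_s\delta_s<\varepsilon_0$, so by the Neumann series $T$ is a surjective isomorphism with $\log\|T\|\,\|T^{-1}\|<r$, and moreover $T\restriction D_s=T_s\restriction D_s$ for every $s$ because $T_{s'}\restriction D_s=T_s\restriction D_s$ for all $s'\ge s$ by (1). Hence, by (2), $T(u)\in W$ for all $u\in\bigcup_s S_s$; by (3) the sets $\Rat\Span(S_s)$ exhaust $\{e_n:n\in\Nat\}$ and so cover $V$, and since $T$ is $\Rat$-linear and $W$ a $\Rat$-subspace, $T(V)\subseteq W$. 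Symmetrically, the sets $T_s(\Rat\Span(S_s))$ are increasing in $s$ (as $T$ is frozen on $D_s\supseteq\Rat\Span(S_s)$) and, by (4), exhaust $\{f_m:m\in\Nat\}$; whenever $f_m$ lies in $T_s(\Rat\Span(S_s))$ its preimage there is an element of $D_s$, hence frozen, so $T(v)=f_m$ for some $v\in V$. Thus $W\subseteq T(V)$, and $T(V)=W$.

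It remains to run the inductive step. At an odd step, let $n$ be least with $e_n\notin\Rat\Span(S_{s-1})$; one first notes $e_n\notin D_{s-1}$ (since $S_{s-1}\subseteq V$, a real linear relation of $e_n$ over $S_{s-1}$ would, via a rational linear system with a real solution, yield a rational one), then Hahn--Banach gives $\phi\in X^*$ with $\phi\restriction D_{s-1}=0$, $\phi(e_n)=1$, $\|\phi\|\le 1/\dist(e_n,D_{s-1})$, and one picks $w\in W$ with $\|w-T_{s-1}(e_n)\|$ small enough and sets
\[
T_s(x):=T_{s-1}(x)+\phi(x)\bigl(w-T_{s-1}(e_n)\bigr),\qquad S_s:=S_{s-1}\cup\{e_n\};
\]
then $T_s$ agrees with $T_{s-1}$ on $D_{s-1}$, $T_s(e_n)=w\in W$, $\|T_s-T_{s-1}\|<\delta_s$ (so $T_s$ is again a surjective isomorphism, as $\|T_s-T_0\|<\varepsilon_0$ and $\|T_0^{-1}\|\varepsilon_0<1$), and (1)--(3) hold. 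An even step is symmetric: put $v':=T_{s-1}^{-1}(f_m)$; if $v'$ lay in $D_{s-1}$ then, using (2) and the same rational-system observation, $f_m$ would lie in $T_{s-1}(\Rat\Span(S_{s-1}))$, contrary to the choice of $m$; so $v'\notin D_{s-1}$, and one picks $v\in V$ near $v'$ with $v\notin D_{s-1}$ (possible by density of $V$), a $\phi\in X^*$ vanishing on $D_{s-1}$ with $\phi(v)=1$, and sets $T_s(x):=T_{s-1}(x)+\phi(x)(f_m-T_{s-1}(v))$, $S_s:=S_{s-1}\cup\{v\}$, with $\|v-v'\|$ small enough that $\|T_s-T_{s-1}\|<\delta_s$. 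The one delicate point is exactly this last requirement: the perturbations must be controlled in operator norm, not merely relative to $T_{s-1}$ (whose inverse may grow along the way), and this is what the rank-one corrections supported on functionals annihilating the already-frozen subspace $D_{s-1}$ achieve, while the elementary coincidence of the real and rational linear spans of finite subsets of $V$ (and of $W$) keeps the $\Rat$-structure under control throughout.
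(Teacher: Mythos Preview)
Your proof is correct and follows essentially the same strategy as the paper: a back-and-forth construction by summable rank-one perturbations (via Hahn--Banach functionals vanishing on the already-frozen finite-dimensional subspace), together with the observation that a rational linear system with a real solution has a rational one, to reconcile $\mathbb{R}$-spans with $\mathbb{Q}$-spans. The only organizational difference is that the paper alternately perturbs $T_{k-1}$ and $T_{k-1}^{-1}$ (packaged in its Claim), tracking both $\|T_k-T_{k-1}\|$ and $\|T_k^{-1}-T_{k-1}^{-1}\|$, whereas you perturb only $T_{s-1}$ throughout---handling the ``back'' step by sending a well-chosen $v\in V$ to $f_m$ rather than perturbing the inverse---and recover invertibility of the limit by a single Neumann-series estimate; this is a harmless variant of the same argument.
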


Throughout the proof of the lemma (including the following claim), by an isomorphism we mean a surjective linear isomorphism.

\begin{claim}\label{claim:BMOnCountableSubspaces-firstStep}
Let $ T : X \to Y $ be an isomorphism and $ v_{1}, \dots, v_{n}, v \in V $ be such that $ Tv_{j} \in W $ for $ 1 \leq j \leq n $. Then, given $ \eta > 0 $, there is an isomorphism $ S : X \to Y $ such that
\begin{itemize}
\item $ \Vert S - T \Vert \leq \eta $ and $ \Vert S^{-1} - T^{-1} \Vert \leq \eta $,
\item $ Sv_{j} = Tv_{j} $ for $ 1 \leq j \leq n $,
\item $ Sv \in W $.
\end{itemize}
\end{claim}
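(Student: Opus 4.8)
The plan is to perturb $T$ so that the single extra vector $v$ is mapped into $W$, while not disturbing the finitely many constraints $Tv_1=v_1',\dots,Tv_n=v_n'$ (writing $v_j'=Tv_j\in W$) and changing $T$ and $T^{-1}$ by at most $\eta$ in norm. First I would dispose of the trivial case: if $v$ lies in the rational span of $v_1,\dots,v_n$, then $Tv$ is already a rational combination of $v_1',\dots,v_n'$, hence already in $W$, and we may take $S=T$. So assume $v\notin\Rat\Span\{v_1,\dots,v_n\}$; by $\Rea$-linear independence considerations (the $v_j$ span a finite-dimensional subspace, and over $\Rea$ the vector $v$ is or is not in its span) we may in fact assume $v$ is $\Rea$-linearly independent from $v_1,\dots,v_n$ — if $v$ were in the real span but not the rational span, a separate but easy approximation handles it, or one simply observes that we can first replace $v$ by a nearby vector; I would keep the main line clean by treating the linearly independent case.

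The key step is then an explicit rank-one-type correction. Pick a vector $w\in W$ with $\|w-Tv\|_Y$ as small as we like (possible since $W$ is dense in $Y$), say $\|w-Tv\|_Y<\theta$ for a $\theta>0$ to be chosen. Let $U=\Span\{v_1,\dots,v_n,v\}\subseteq X$ and let $\varphi\in X^*$ be a functional that vanishes on $\Span\{v_1,\dots,v_n\}$, satisfies $\varphi(v)=1$, and has norm $\|\varphi\|\le C$ for a constant $C$ depending only on the geometry of $U$ (existence by Hahn–Banach applied to the quotient $U/\Span\{v_1,\dots,v_n\}$, then extended). Define
\[
S x = Tx + \varphi(x)\,(w - Tv), \qquad x\in X.
\]
Then $Sv_j = Tv_j = v_j'$ for each $j\le n$ because $\varphi(v_j)=0$, and $Sv = Tv + (w-Tv) = w\in W$, so the last two bullets hold. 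Moreover $S = T + \varphi\otimes(w-Tv)$ differs from $T$ in operator norm by at most $\|\varphi\|\,\|w-Tv\|_Y < C\theta$, which is $\le\eta$ once $\theta$ is small enough. Finally $S = T\circ(\id_X + T^{-1}(\varphi\otimes(w-Tv))) = T\circ(\id_X + \varphi\otimes T^{-1}(w-Tv))$; the perturbing operator has norm at most $\|\varphi\|\,\|T^{-1}\|\,\|w-Tv\|_Y < C\|T^{-1}\|\theta$, so for $\theta$ small it is $\le\tfrac12$, whence $\id_X + \varphi\otimes T^{-1}(w-Tv)$ is invertible by a Neumann series, $S$ is a surjective linear isomorphism, and a standard estimate (differentiating the Neumann series, or the resolvent identity) gives $\|S^{-1}-T^{-1}\|\le \|T^{-1}\|\cdot\frac{C\|T^{-1}\|\theta}{1-C\|T^{-1}\|\theta}$, which is $\le\eta$ for $\theta$ small enough. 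Choosing $\theta$ to satisfy all the finitely many smallness requirements finishes the construction.

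The main obstacle I anticipate is not the perturbation estimate itself — that is routine once the rank-one correction is written down — but rather handling the interaction between the $\Rat$-structure and the $\Rea$-structure cleanly: specifically, guaranteeing that the functional $\varphi$ and the correction genuinely leave $v_1,\dots,v_n$ fixed (not just approximately), which forces $\varphi$ to annihilate their span exactly, and making sure the degenerate case $v\in\Rat\Span\{v_1,\dots,v_n\}$ versus $v\in\Rea\Span\{v_1,\dots,v_n\}\setminus\Rat\Span\{v_1,\dots,v_n\}$ is addressed. In the latter subcase $\varphi(v)$ cannot be normalized to $1$ while killing the $v_j$'s; but then $Tv$ is already a real combination of the $v_j'\in W$, and since $W$ is a $\Rat$-linear (not $\Rea$-linear) span one still needs a short argument — e.g. first move $v$ slightly to a linearly independent position, apply the above, then note the claim only requires $\|S-T\|$, $\|S^{-1}-T^{-1\|}$ small, not that $S$ agree with $T$ on $v$. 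I expect this bookkeeping, rather than any analytic difficulty, to be where care is needed.
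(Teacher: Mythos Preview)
Your main line --- the rank-one correction $S = T + \varphi\otimes(w-Tv)$ with $\varphi(v_j)=0$, $\varphi(v)=1$ --- is exactly what the paper does in its Case~(1), and your perturbation estimates are equivalent to theirs (they use the identity $S^{-1}-T^{-1}=S^{-1}(T-S)T^{-1}$ rather than a Neumann series, but this is cosmetic).

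The gap is in the degenerate case $v\in\Rea\Span\{v_1,\dots,v_n\}\setminus\Rat\Span\{v_1,\dots,v_n\}$. Your proposed workaround --- perturb $v$ to a nearby linearly independent $\tilde v$, run the construction, and observe that the claim ``only requires $\|S-T\|$, $\|S^{-1}-T^{-1}\|$ small'' --- does not work: the claim also requires $Sv\in W$ for the given $v$, and if $\varphi$ is built from $\tilde v$ then $\varphi$ vanishes on the real span of the $v_j$'s, hence $\varphi(v)=0$ and $Sv=Tv$, which is precisely the element you have not shown lies in $W$. No rank-one correction that fixes the $v_j$'s can move $Tv$ at all in this situation.

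The paper's resolution is different and essential: it shows this case \emph{never occurs}. Since $v,v_1,\dots,v_n$ all lie in $V=\Rat\Span\{e_i\}$ with the $e_i$ $\Rea$-linearly independent, the relation $v=\sum_j\alpha_j v_j$ translates into a system of linear equations over $\Rat$ in the unknowns $\alpha_j$; having a real solution forces a rational solution, so $v\in\Rat\Span\{v_1,\dots,v_n\}$ and $Tv\in W$ already. This is the ``interaction between the $\Rat$-structure and the $\Rea$-structure'' you flagged, and it is resolved by a linear-algebra observation, not by approximation.
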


\begin{proof}
We consider two cases.

(1) Assume that $ v $ does not belong to the linear span of $ v_{1}, \dots, v_{n} $. In this case, there is $ x^{*} \in X^{*} $ such that $ x^{*}(v) = 1 $ and $ x^{*}(v_{j}) = 0 $ for $ 1 \leq j \leq n $. Let $ \varepsilon > 0 $ be such that $ \varepsilon \leq \eta $, $ \varepsilon < \Vert T^{-1} \Vert^{-1} $, $ (\Vert T^{-1} \Vert^{-1} - \varepsilon)^{-1} \cdot \Vert T^{-1} \Vert \cdot \varepsilon \leq \eta $ and every linear operator $ S : X \to Y $ with $ \Vert S - T \Vert \leq \varepsilon $ is an isomorphism (which is possible, because the set of isomorphisms is open). Let $ w \in W $ be such that $ \Vert w - Tv \Vert \leq \varepsilon/\Vert x^{*} \Vert $, and let
$$ Sx = Tx + x^{*}(x) \cdot (w - Tv), \quad x \in X. $$
Clearly, $ Sv_{j} = Tv_{j} $ for $ j \leq n $ and $ Sv = w \in W $. At the same time, $ \Vert S - T \Vert \leq \Vert x^{*} \Vert \Vert w - Tv \Vert \leq \varepsilon \leq \eta $. Note that $ \Vert Sx \Vert \geq \Vert Tx \Vert - \varepsilon \Vert x \Vert \geq (\Vert T^{-1} \Vert^{-1} - \varepsilon) \Vert x \Vert $ for $ x \in X $, and that $ S $ is an isomorphism with $ \Vert S^{-1} \Vert \leq (\Vert T^{-1} \Vert^{-1} - \varepsilon)^{-1} $ in particular. Finally, we obtain $ \Vert S^{-1} - T^{-1} \Vert = \Vert S^{-1} (T - S) T^{-1} \Vert \leq \Vert S^{-1} \Vert \Vert T - S \Vert \Vert T^{-1} \Vert \leq (\Vert T^{-1} \Vert^{-1} - \varepsilon)^{-1} \cdot \varepsilon \cdot \Vert T^{-1} \Vert \leq \eta $.

(2) Assume that, on the other hand, $ v $ belongs to the linear span of $ v_{1}, \dots, v_{n} $. We just need to check that $ v $ belongs to the $ \mathbb{Q} $-linear span of $ v_{1}, \dots, v_{n} $ as well, since then clearly $ Tv \in W $ and the choice $ S = T $ works. There are a large enough $ m \in \mathbb{N} $ and rational numbers $ q^{i}, q_{j}^{i} $ such that
$$ v = \sum_{i=1}^{m} q^{i} e_{i}, \quad v_{j} = \sum_{i=1}^{m} q_{j}^{i} e_{i}. $$
For some real numbers $ \alpha_{1}, \dots, \alpha_{n} $, we have $ v = \sum_{j=1}^{n} \alpha_{j} v_{j} $. That is,
$$ \sum_{i=1}^{m} q^{i} e_{i} = \sum_{j=1}^{n} \alpha_{j} \sum_{i=1}^{m} q_{j}^{i} e_{i} = \sum_{i=1}^{m} \Big( \sum_{j=1}^{n} q_{j}^{i} \alpha_{j} \Big) e_{i}. $$
As $ e_{1}, \dots, e_{n} $ are assumed to be linearly independent, we obtain
$$ \sum_{j=1}^{n} q_{j}^{i} \alpha_{j} = q^{i}, \quad i = 1, \dots, m. $$
Hence, the system of linear equations $ \sum_{j=1}^{n} q_{j}^{i} x_{j} = q^{i}, i = 1, \dots, m, $ has a solution. It follows from the methods of solving systems of linear equations that it has a solution $ \beta_{1}, \dots, \beta_{n} $ consisting of rational numbers. By a similar computation as above, we can obtain $ v = \sum_{j=1}^{n} \beta_{j} v_{j} $.
\end{proof}

\begin{proof}[Proof of Lemma~\ref{lem:BanachMazurVerifiableOnCountableSubspaces}]
Let $ T_{0} : X \to Y $ be an isomorphism with $ \Vert T_{0} \Vert \Vert T_{0}^{-1} \Vert < e^{r} $. Let us pick a small enough $ \varepsilon > 0 $ such that $ (\Vert T_{0} \Vert +\varepsilon) (\Vert T_{0}^{-1} \Vert + \varepsilon) < e^{r} $. We are going to find sequences $ T_{1}, T_{2}, \dots $ of isomorphisms, $ x_{1}, x_{2}, \dots $ of points in $ V $ and $ y_{1}, y_{2}, \dots $ of points in $ W $ such that
\begin{itemize}
\item $ \Vert T_{k} - T_{k-1} \Vert \leq 2^{-k}\varepsilon $ and $ \Vert T_{k}^{-1} - T_{k-1}^{-1} \Vert \leq 2^{-k}\varepsilon $,
\item $ T_{k}e_{j} = y_{j} $ and $ T_{k}^{-1}f_{j} = x_{j} $ for $ j \leq k $.
\end{itemize}

Let us assume that $ k \in \mathbb{N} $ and that we have already found $ T_{j} $, $ x_{j} $ and $ y_{j} $ for $ j < k $. Applying Claim~\ref{claim:BMOnCountableSubspaces-firstStep}, we obtain an isomorphism $ \tilde{T}_{k-1} : X \to Y $ such that
\begin{itemize}
\item $ \Vert \tilde{T}_{k-1} - T_{k-1} \Vert \leq 2^{-k-1}\varepsilon $ and $ \Vert \tilde{T}_{k-1}^{-1} - T_{k-1}^{-1} \Vert \leq 2^{-k-1}\varepsilon $,
\item $ \tilde{T}_{k-1}e_{j} = T_{k-1}e_{j} $ for $ j < k $ and $ \tilde{T}_{k-1}x_{j} = T_{k-1}x_{j} $ for $ j < k $,
\item $ \tilde{T}_{k-1}e_{k} \in W $.
\end{itemize}
Let us put $ y_{k} = \tilde{T}_{k-1}e_{k} $. Applying Claim~\ref{claim:BMOnCountableSubspaces-firstStep} once more, we obtain an isomorphism $ S_{k} : Y \to X $ such that
\begin{itemize}
\item $ \Vert S_{k} - \tilde{T}_{k-1}^{-1} \Vert \leq 2^{-k-1}\varepsilon $ and $ \Vert S_{k}^{-1} - \tilde{T}_{k-1} \Vert \leq 2^{-k-1}\varepsilon $,
\item $ S_{k}f_{j} = \tilde{T}_{k-1}^{-1}f_{j} $ for $ j < k $ and $ S_{k}y_{j} = \tilde{T}_{k-1}^{-1}y_{j} $ for $ j \leq k $,
\item $ S_{k}f_{k} \in V $.
\end{itemize}
Let us put $ x_{k} = S_{k}f_{k} $ and $ T_{k} = S_{k}^{-1} $. Let us check that the choice works. We have $ \Vert T_{k} - T_{k-1} \Vert = \Vert S_{k}^{-1} - T_{k-1} \Vert \leq \Vert S_{k}^{-1} - \tilde{T}_{k-1} \Vert + \Vert \tilde{T}_{k-1} - T_{k-1} \Vert \leq 2^{-k-1}\varepsilon + 2^{-k-1}\varepsilon = 2^{-k}\varepsilon $ and $ \Vert T_{k}^{-1} - T_{k-1}^{-1} \Vert = \Vert S_{k} - T_{k-1}^{-1} \Vert \leq \Vert S_{k} - \tilde{T}_{k-1}^{-1} \Vert + \Vert \tilde{T}_{k-1}^{-1} - T_{k-1}^{-1} \Vert \leq 2^{-k-1}\varepsilon + 2^{-k-1}\varepsilon = 2^{-k}\varepsilon $. For $ j < k $, we have $ T_{k}e_{j} = S_{k}^{-1} \tilde{T}_{k-1}^{-1} \tilde{T}_{k-1} e_{j} = S_{k}^{-1} \tilde{T}_{k-1}^{-1} T_{k-1} e_{j} = S_{k}^{-1} \tilde{T}_{k-1}^{-1} y_{j} = S_{k}^{-1} S_{k} y_{j} = y_{j} $ and $ T_{k}^{-1}f_{j} = S_{k}f_{j} = \tilde{T}_{k-1}^{-1}f_{j} = \tilde{T}_{k-1}^{-1} T_{k-1} T_{k-1}^{-1} f_{j} = \tilde{T}_{k-1}^{-1} T_{k-1} x_{j} = \tilde{T}_{k-1}^{-1} \tilde{T}_{k-1} x_{j} = x_{j} $. Finally, $ T_{k}e_{k} = S_{k}^{-1} \tilde{T}_{k-1}^{-1} \tilde{T}_{k-1} e_{k} = S_{k}^{-1} \tilde{T}_{k-1}^{-1} y_{k} = S_{k}^{-1} S_{k} y_{k} = y_{k} $ and $ T_{k}^{-1}f_{k} = S_{k}f_{k} = x_{k} $.

So, the sequences $ T_{k} $, $ x_{k} $ and $ y_{k} $ are found. Clearly, the sequence $ T_{0}, T_{1}, \dots $ is Cauchy and has a limit $ T $ with $ \Vert T - T_{0} \Vert \leq \sum_{k=1}^{\infty} 2^{-k}\varepsilon = \varepsilon $. Similarly, the sequence $ T_{0}^{-1}, T_{1}^{-1}, \dots $ has a limit $ S $ with $ \Vert S - T_{0}^{-1} \Vert \leq \varepsilon $. Moreover, $ TS = \lim_{k \to \infty} T_{k}T_{k}^{-1} = \lim_{k \to \infty} I = I $, and so $ T $ is an isomorphism with $ T^{-1} = S $. It follows that
$$ \Vert T \Vert \Vert T^{-1} \Vert \leq (\Vert T_{0} \Vert +\varepsilon) (\Vert T_{0}^{-1} \Vert + \varepsilon) < e^{r}. $$
At the same time, $ Te_{j} = y_{j} \in W $ and $ T^{-1}f_{j} = x_{j} \in V $ for every $ j $. Hence, we arrive at $ T(V) = W $.
\end{proof}

The last three distances we shall present are all related to the coarse (or large scale) geometry of metric (and Banach) spaces. We refer the reader to \cite[Chapter 8]{BBI} or the monograph \cite{NoYu} for an introduction into this subject.
\subsubsection{Hausdorff-Lipschitz and net distances}

Gromov defines in \cite[Definition 3.19]{Gro} a distance defined as some variation of both the Gromov-Hausdorff and Lipschitz distances.

\begin{defin}[Hausdorff-Lipschitz distance]\label{defin:HausdorffLipschitz}
For metric spaces $M$ and $N$, their \emph{Hausdorff-Lipschitz distance} is defined as 
\[
\rho_{HL}(M,N)=\inf \big\{\rho_{GH}(M,M')+\rho_L(M',N')+\rho_{GH}(N',N)\setsep M',N'\text{ metric spaces}\big\}.
\]
\end{defin}
The Hausdorff-Lipschitz distance corresponds to the notion of \emph{quasi-isometry} or \emph{coarse Lipschitz equivalence}, because for metric spaces $M$ and $N$ we have $\rho_{HL}(M,N)<\infty$ if and only if the spaces $M$ and $N$ are quasi-isometric, or coarse Lipschitz equivalent (see e.g. \cite[Section 8.3]{BBI} for further information). For information about coarse geometry of Banach spaces we refer to the survey \cite{Lan} or the monograph \cite{Ostbook}.

\medskip
Following \cite[Definition 10.18]{BenLin}, by an \emph{$(a,b)$-net} in a metric space $M$, where $a,b$ are positive reals, we mean a subset $\Net\subseteq M$ such that for every $m\neq n\in\Net$ we have $d(m,n)\geq a$, and for every $x\in M$ there exists $n\in\Net$ with $d(x,n)<b$. If the constants $a$ and $b$ are not important, we just call the subset $\Net$ \emph{a net}. Observe that a maximal $\varepsilon$-separated subset $\Net\subseteq M$ (which exists by Zorn's lemma) is an $(\varepsilon,\varepsilon)$-net. Dutrieux and Kalton \cite{DuKa} consider the net distance which we define as follows (let us note that a slightly different definition of $\rho_L$ is used in \cite{DuKa}).

\begin{defin}[Net distance]\label{defin:netDistance}
The \emph{net distance} between two Banach spaces $X$ and $Y$ is defined as \[
\rho_N(X,Y) = \inf \big\{\rho_L(\Net_X,\Net_Y)\setsep \Net_X,\Net_Y\text{ are nets in }X,Y\text{ respectively}\big\}.
\]
\end{defin}

The next observation is in a sense a quantitative version, for Banach spaces, of \cite[Proposition 8.3.4]{BBI}, where it is proved that two metric spaces are quasi-isometric if and only if they have Lipschitz equivalent nets.

\begin{prop}\label{prop:HLandNetDistanceAreEqual}
For Banach spaces $X$ and $Y$ we have $\rho_N(X,Y)=\rho_{HL}(X,Y)$.
\end{prop}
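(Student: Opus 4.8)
The plan is to prove the two inequalities $\rho_N(X,Y) \le \rho_{HL}(X,Y)$ and $\rho_{HL}(X,Y) \le \rho_N(X,Y)$ separately. The second is the easier direction: given any nets $\Net_X \subseteq X$ and $\Net_Y \subseteq Y$, a maximal $\varepsilon$-separated net $\Net$ in a Banach space $Z$ satisfies $\rho_{GH}(Z, \Net) \le \varepsilon$ (indeed the inclusion is an isometric embedding realizing Hausdorff distance at most $\varepsilon$), and in fact by taking finer and finer nets one gets $\rho_{GH}(X, \Net_X) = 0$ whenever $\Net_X$ is a net coming from a maximal $\varepsilon$-separated set — but more carefully, for an arbitrary $(a,b)$-net one has $\rho_{GH}(X,\Net_X) \le b$. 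This is not good enough directly, so instead I would argue: for any $\eta>0$ one can find a net $\Net_X'$ inside (or refining) $\Net_X$ — actually the cleanest route is to observe that $\rho_{GH}(X,\Net_X) = 0$ is false in general, so one should instead note that a net $\Net_X$ with parameters $(a,b)$ can be thinned/thickened so that $\rho_{GH}(X,\Net_X)$ is as small as we wish while keeping a bi-Lipschitz copy of the original net available. On reflection, the honest statement is: $\rho_{HL}(X,Y) \le \rho_{GH}(X,\Net_X) + \rho_L(\Net_X,\Net_Y) + \rho_{GH}(\Net_Y,Y)$ trivially by definition of $\rho_{HL}$, and $\rho_{GH}(X,\Net_X) \le b$ for an $(a,b)$-net; since in a Banach space one can choose, for any $\varepsilon>0$, an $(\varepsilon,\varepsilon)$-net, and since $\rho_L$ between two nets of a space does not depend much on which net (nets of the same space are bi-Lipschitz with constant controlled by the ratio of parameters — cf.\ the classical fact behind \cite[Proposition 8.3.4]{BBI}), one passes to the infimum and gets $\rho_{HL}(X,Y) \le \rho_N(X,Y)$. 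I would spell this out using that any two nets $\Net, \Net'$ in the same Banach space $Z$ satisfy $\rho_L(\Net,\Net') < \infty$ with an explicit bound, and that $\rho_N$ is unchanged if we restrict to $(\varepsilon,\varepsilon)$-nets with $\varepsilon \to 0$, whose Gromov--Hausdorff distance to the ambient space tends to $0$.

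For the inequality $\rho_N(X,Y) \le \rho_{HL}(X,Y)$, fix metric spaces $M', N'$ and a bi-Lipschitz bijection $T : M' \to N'$ witnessing $\rho_{GH}(X,M') + \rho_L(M',N') + \rho_{GH}(N',N) < \rho_{HL}(X,Y) + \eta$. The key point is to transport nets across small Gromov--Hausdorff distance: if $\rho_{GH}(X,M') < r$, then there is a correspondence $\Corr \subseteq X \times M'$ distorting distances by less than $2r$; choosing a maximal $\varepsilon$-separated net $\Net_X$ in $X$ and selecting for each point of $\Net_X$ a $\Corr$-partner in $M'$, one obtains a subset $\Net_{M'} \subseteq M'$ which is ``almost'' a net in $M'$ — it is $(\varepsilon - 2r)$-separated and $(\varepsilon + 2r)$-dense — and the natural bijection $\Net_X \to \Net_{M'}$ is a bi-Lipschitz map whose constant tends to $1$ as $r/\varepsilon \to 0$, hence contributes to $\rho_L$ an amount $\to 0$. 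Applying $T$ pushes $\Net_{M'}$ to a net-like subset of $N'$, and pulling back through the correspondence witnessing $\rho_{GH}(N',N) < r$ gives a net-like subset of $N$, hence (after another negligible bi-Lipschitz adjustment) an actual net $\Net_Y$ in $Y=N$. Composing the four bi-Lipschitz maps, $\rho_L(\Net_X, \Net_Y)$ is bounded by $\rho_L(M',N')$ plus an error controlled by the ratios of the Gromov--Hausdorff radii to the net separation constant; sending $\varepsilon$ to be large relative to $r$ (equivalently shrinking the correspondence distortion) makes the error vanish, and taking infima gives $\rho_N(X,Y) \le \rho_{HL}(X,Y)$.

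The main obstacle I anticipate is the bookkeeping in the second inequality: specifically, making precise that a subset obtained from a genuine net by transport through an $\varepsilon'$-distorting correspondence is bi-Lipschitz equivalent (with constant $\to 1$) to an actual net of the target space, and that iterating this three times (two Gromov--Hausdorff passages and one Lipschitz passage) the accumulated multiplicative errors can be absorbed into $\eta$. One has to choose the net separation constant $\varepsilon$ \emph{after} the metric spaces $M',N'$ and the map $T$ are fixed, large compared to the Gromov--Hausdorff radii $r$ but this is consistent since $r$ is already fixed; then the distortion of each transported-net bijection is at most $\tfrac{\varepsilon + 2r}{\varepsilon - 2r}$, whose logarithm $\to 0$, and the Lipschitz distance is subadditive under composition, so $\rho_L(\Net_X,\Net_Y) \le \rho_L(M',N') + 2\log\tfrac{\varepsilon+2r}{\varepsilon-2r}$. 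A secondary technical point is verifying that the ``almost-net'' subsets really are nets after a harmless perturbation — one may need to add finitely many points locally to restore density or remove points to restore separation, which only improves the Lipschitz constant in the limit; I would phrase this via maximal separated subsets to avoid fuss. Once these estimates are in place, letting $\eta \to 0$ and $\varepsilon/r \to \infty$ closes both inequalities.
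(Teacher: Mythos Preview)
Your plan for the inequality $\rho_N(X,Y) \le \rho_{HL}(X,Y)$ is essentially the paper's argument and is correct: choose a $C$-separated net $\Net_X \subseteq X$ with $C$ large relative to the Gromov--Hausdorff radii, push it through the two correspondences and the bi-Lipschitz map $T$, and compute that the composite $\phi:\Net_X \to Y$ has Lipschitz constant at most $\exp(K) + O(1/C)$. The image $\phi(\Net_X)$ is automatically a net in $Y$, so your ``almost-net'' worry is harmless.

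The gap is in the direction $\rho_{HL}(X,Y) \le \rho_N(X,Y)$. You want to show that $\rho_N$ is unchanged when the infimum is restricted to $(\varepsilon,\varepsilon)$-nets with $\varepsilon \to 0$, and you propose to derive this from the fact that any two nets in the same Banach space are bi-Lipschitz with constant controlled by the parameter ratios. But that fact only yields a \emph{bounded} constant, not one tending to~$1$: for instance $\mathbb{Z}$ and $\{3k, 3k+1 : k \in \mathbb{Z}\}$ are both nets in $\mathbb{R}$ with $\rho_L \ge \log(3/2)$, since any bijection between them must have large-scale slope $3/2$. So the triangle inequality
\[
\rho_L(\Net_X',\Net_Y') \le \rho_L(\Net_X',\Net_X) + \rho_L(\Net_X,\Net_Y) + \rho_L(\Net_Y,\Net_Y')
\]
gives at best $\rho_{HL} \le \rho_N + c$ for some fixed $c>0$ that does not vanish.

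The missing idea, which the paper uses and singles out in Remark~\ref{remark:ConePropertyOfBanach}, is the cone property of Banach spaces: if $\Net_X$ is an $(a,b)$-net in $X$ then $\Net_X/C = \{x/C : x \in \Net_X\}$ is an $(a/C, b/C)$-net in $X$, and if $T:\Net_X \to \Net_Y$ is bi-Lipschitz then the rescaled map $T_C(x/C) = T(x)/C$ between $\Net_X/C$ and $\Net_Y/C$ has exactly the same Lipschitz constants as $T$. Thus $\rho_L(\Net_X/C, \Net_Y/C) = \rho_L(\Net_X, \Net_Y)$ while $\rho_{GH}(X, \Net_X/C) \le b/C \to 0$, giving $\rho_{HL}(X,Y) \le \rho_L(\Net_X,\Net_Y)$ directly. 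Replacing the given nets by \emph{different} fine nets, as you suggest, does not work; rescaling the \emph{given} nets simultaneously does.
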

\begin{proof}
Fix Banach spaces $X$ and $Y$ and a positive real $K$. Suppose that $\rho_N(X,Y)<K$. So there exist an $(a,b)$-net $\Net_X\subseteq X$ and an $(a',b')$-net $\Net_Y\subseteq Y$ and a bi-Lipschitz map $T:\Net_X\rightarrow \Net_Y$ with $\log\max\{ \Lip(T), \Lip(T^{-1})\}<K$. Take any $\varepsilon>0$. By rescaling the nets $\Net_X$ and $\Net_Y$ by a sufficiently large constant $C$ if necessary, that is, taking $\Net_X/C=\{x/C\setsep x\in\Net_X\}$ and $\Net_Y/C$, we may suppose that the nets $\Net_X$ and $\Net_Y$ are an $(a,\varepsilon)$-net, resp. an $(a',\varepsilon)$-net. Then we clearly have $\rho_{GH}(X,\Net_X)\leq \varepsilon$ and $\rho_{GH}(Y,\Net_Y)\leq\varepsilon$, so $$\rho_{HL}(X,Y)\leq \rho_{GH}(X,\Net_X)+\rho_L(\Net_X,\Net_Y)+\rho_{GH}(\Net_Y,Y)< K+2\varepsilon.$$ Since $\varepsilon>0$ was arbitrary, it shows that $\rho_{HL}(X,Y)\leq K$.

Conversely, suppose that $\rho_{HL}(X,Y)<K$. So there exist metric spaces $X'$ and $Y'$ such that $\rho_{GH}(X,X')+\rho_L(X',Y')+\rho_{GH}(Y',Y)<K$. By Fact \ref{fact:GHbyCorrespondences} there are correspondences $\Corr_X\subseteq X\times X'$ and $\Corr_Y\subseteq Y'\times Y$ witnessing that $\rho_{GH}(X,X') < K$ and $\rho_{GH}(Y',Y) < K$. Let $C>0$ be a sufficiently large constant, more precisely specified later, and find some $C$-maximal separated set $\Net_X$ in $X$, which is therefore a $(C,C)$-net. Since $C$ is large, for every $n\neq m\in\Net_X$ we have that $\{x\in X'\setsep n\Corr_X x\}\cap \{x\in X'\setsep m\Corr_X x\}=\emptyset$, so we pick some injective map $f_1:\Net_X\rightarrow X'$ such that for every $n\in\Net_X$ we have $n\Corr_X f_1(n)$. Since $\rho_L(X',Y')<K$ there exists a bi-Lipschitz map $T:X'\rightarrow Y'$ with $\max\{\Lip(T),\Lip(T^{-1})\}< \exp(K)$. Again since $C$ is large enough it follows that for every $n\neq m\in\Net_X$ we have that $\{y\in Y\setsep (T\circ f_1)(n)\Corr_Y y\}\cap \{y\in Y\setsep (T\circ f_1)(m)\Corr_Y y\}=\emptyset$, so we pick some injective map $f_2: (T\circ f_1)[\Net_X]\rightarrow Y$ such that for every $z\in (T\circ f_1)[\Net_X]$ we have $z\Corr_Y f_2(z)$. Set $\phi=f_2\circ T\circ f_1: \Net_X\rightarrow Y$. It follows that the range of $\phi$ is a net $\Net_Y$ in $Y$. Let us compute the Lipschitz constant of $\phi$ and $\phi^{-1}$. For any $n\neq m\in\Net_X$ we have 
\[\begin{split}
\|\phi(n)-\phi(m)\|_Y & \leq d_{Y'}((T\circ f_1)(n),(T\circ f_1)(m))+2K \\
& < \exp(K)d_{X'}(f_1(n),f_1(m)) + 2K \\
& \leq \exp(K)(\|n-m\|_X + 2K) + 2K\\
& \leq \Big(\exp(K) + \tfrac{2K(\exp(K) + 1)}{C}\Big)\|n-m\|_X.
\end{split}\]
However, $\frac{2K(\exp(K) + 1)}{C}\to 0$ as $C\to \infty$. The computation of $\Lip(\phi^{-1})$ is analogous, so we get that $\rho_N(X,Y)\leq K$, and we are done.
\end{proof}
\begin{remark}\label{remark:ConePropertyOfBanach}
Note that in Proposition \ref{prop:HLandNetDistanceAreEqual} the only geometric property of Banach spaces that we used in the proof is that any rescaling of a Banach space $X$ is isometric to $X$. Spaces with this property are called \emph{cones} \cite[Definition 8.2.1]{BBI}. So we have proved that if $\rho_N$ was defined in an obvious way on metric spaces, it would coincide with $\rho_{HL}$ on cones.
\end{remark}

Our next result shows that it is possible to express the Hausdorff-Lipschitz distance, up to uniform equivalence, in terms of correspondences. This observation will be used further.

\begin{defin}Let $d,e\in\Met$ and $\varepsilon > 0$. We say that $d$ and $e$ are \emph{$HL(\varepsilon)$-close} if there exists a correspondence $\Corr\subseteq \Nat\times\Nat$ such that for every $i,i',j,j'\in\Nat$ with $i\Corr j$ and $i'\Corr j'$ we have
\begin{align}
\label{eq:HL1} e(j,j')\leq d(i,i') + \varepsilon \cdot \max \{ 1, d(i,i') \},\\
\label{eq:HL2} d(i,i')\leq e(j,j') + \varepsilon \cdot \max \{ 1, e(j,j') \}.
\end{align}
\end{defin}

\begin{lemma}\label{lem:HLByCorrespondences}There are continuous functions $\varphi_i:(0,\infty)\to (0,\infty)$, $i\in\{1,2\}$, such that $\lim_{\varepsilon\to 0} \varphi_i(\varepsilon) = 0$ and, whenever $d,e\in\Met$ and $\varepsilon > 0$ are given, we have
\[\begin{split}
\rho_{HL}(d,e) < \varepsilon &  \Rightarrow \text{$d$ and $e$ are $HL(\varphi_1(\varepsilon))$-close};\\
\text{$d$ and $e$ are $HL(\varepsilon)$-close} & \Rightarrow \rho_{HL}(d,e) < \varphi_2(\varepsilon).
\end{split}\]
\end{lemma}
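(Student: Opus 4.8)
The plan is to prove the two implications separately. In each direction one constructs the required object explicitly --- a correspondence on $\Nat\times\Nat$ for the first implication, a pair of auxiliary metric spaces $M',N'$ for the second --- and then reads off admissible functions $\varphi_1,\varphi_2$ from the resulting estimates; these will be elementary functions of $\varepsilon$ (with $\varphi_2$ essentially governed by $\sqrt\varepsilon$), so continuity and $\varphi_i(\varepsilon)\to 0$ as $\varepsilon\to 0$ will be automatic. Throughout I use that $\rho_{GH}$ is insensitive to passing to dense subsets, so $\rho_{GH}((\Nat,d),M')=\rho_{GH}(M_d,M')$ and likewise $\rho_{HL}(d,e)=\rho_{HL}(M_d,M_e)$.

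For the implication $\rho_{HL}(d,e)<\varepsilon\Rightarrow d,e$ are $HL(\varphi_1(\varepsilon))$-close, I would fix metric spaces $M',N'$ and a small $\gamma>0$ with $\rho_{GH}(M_d,M')+\rho_L(M',N')+\rho_{GH}(N',M_e)<\varepsilon-3\gamma$, calling the three summands $a,b,c$. Applying Fact~\ref{fact:GHbyCorrespondences} to $(\Nat,d)$ and $M'$ gives a correspondence $\Corr_1\subseteq\Nat\times M'$ of distortion $<2(a+\gamma)$, and to $N'$ and $(\Nat,e)$ a correspondence $\Corr_2\subseteq N'\times\Nat$ of distortion $<2(c+\gamma)$; from $\rho_L(M',N')<b+\gamma$ one gets a bi-Lipschitz bijection $T\colon M'\to N'$ with $e^{-(b+\gamma)}d_{M'}(z,z')\le d_{N'}(Tz,Tz')\le e^{b+\gamma}d_{M'}(z,z')$. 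I then define $\Corr\subseteq\Nat\times\Nat$ by $n\Corr m$ iff there is $z\in M'$ with $n\Corr_1 z$ and $Tz\Corr_2 m$; this is readily checked to be a correspondence. Chaining the three estimates, for $n\Corr m$, $n'\Corr m'$ witnessed by $z,z'$ one obtains
\[
e(m,m')\le d(n,n')+\big[(e^{b+\gamma}-1)+2(a+\gamma)e^{b+\gamma}+2(c+\gamma)\big]\max\{1,d(n,n')\},
\]
and symmetrically with $d$ and $e$ interchanged; since $a+\gamma,b+\gamma,c+\gamma<\varepsilon$, the bracket is $<(e^{\varepsilon}-1)+2\varepsilon e^{\varepsilon}+2\varepsilon$, so one may set $\varphi_1(\varepsilon):=(e^{\varepsilon}-1)+2\varepsilon e^{\varepsilon}+2\varepsilon$ (any slightly larger continuous choice also works). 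A minor technical wrinkle is that $M',N'$ need not be separable, but one may first replace them by separable subspaces built from countable portions of $\Corr_1,\Corr_2$ without affecting the argument.

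For the implication $d,e$ are $HL(\varepsilon)$-close $\Rightarrow\rho_{HL}(d,e)<\varphi_2(\varepsilon)$, the key idea is to extract nets at the \emph{intermediate scale} $\delta:=\sqrt\varepsilon$. Given an $HL(\varepsilon)$-correspondence $\Corr\subseteq\Nat\times\Nat$, let $A\subseteq\Nat$ be a maximal $2\delta$-separated subset of $(\Nat,d)$; by density of $\Nat$ in $M_d$, $A$ is then a net of mesh $O(\delta)$ in $M_d$, so $\rho_{GH}(M_d,A)=O(\delta)$. Pick a selector $\sigma\colon A\to\Nat$ with $a\Corr\sigma(a)$. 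Because $2\delta=2\sqrt\varepsilon\gg\varepsilon$, the inequalities \eqref{eq:HL1} and \eqref{eq:HL2} force, for $a\neq a'\in A$,
\[
(1-O(\sqrt\varepsilon))\,d(a,a')\le e(\sigma(a),\sigma(a'))\le(1+O(\sqrt\varepsilon))\,d(a,a');
\]
in particular $\sigma$ is injective, $\sigma(A)$ is $\Omega(\delta)$-separated, and $\sigma\colon A\to\sigma(A)$ is a bi-Lipschitz bijection with $\rho_L(A,\sigma(A))=O(\sqrt\varepsilon)$. Using that $\Corr$ is a correspondence and $A$ is coarsely dense in $M_d$, a short computation via \eqref{eq:HL1} shows $\sigma(A)$ is $O(\delta)$-coarsely dense in $M_e$, hence $\rho_{GH}(\sigma(A),M_e)=O(\delta)$. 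Taking $M'=A$, $N'=\sigma(A)$ in the definition of $\rho_{HL}$ yields $\rho_{HL}(d,e)\le\rho_{GH}(M_d,A)+\rho_L(A,\sigma(A))+\rho_{GH}(\sigma(A),M_e)=O(\sqrt\varepsilon)$, and one lets $\varphi_2(\varepsilon)$ be this explicit $O(\sqrt\varepsilon)$ bound for small $\varepsilon$ and any continuous positive extension otherwise.

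The main obstacle is this second implication, and the choice $\delta=\sqrt\varepsilon$ is exactly what makes it work: one cannot take the net too fine, since distances below $\varepsilon$ are destroyed by the additive term in \eqref{eq:HL1}--\eqref{eq:HL2} and $\sigma$ would fail to be injective; nor too coarse, since a net of mesh $\asymp1$ is Gromov--Hausdorff-far from the ambient space. The scale $\sqrt\varepsilon$ is the geometric mean of the two competing scales, balancing the Gromov--Hausdorff error ($\asymp$ mesh $\asymp\delta$) against the Lipschitz error ($\asymp\varepsilon/\delta$), so that both tend to $0$. Everything else --- verifying the correspondence properties, the chained estimates in direction one, and the coarse-density and separation claims for $\sigma(A)$ --- is routine.
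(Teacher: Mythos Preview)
Your proposal is correct and follows essentially the same approach as the paper: in the first direction you compose the two Gromov--Hausdorff correspondences with the bi-Lipschitz bijection to build a single correspondence on $\Nat\times\Nat$, and in the second direction you pass to a maximal separated net at scale $\asymp\sqrt\varepsilon$, push it across by a selector $\sigma$, and estimate the three pieces of $\rho_{HL}$ separately. The paper does exactly this, with $\delta=\varepsilon+\sqrt\varepsilon$ and a $\delta$-separated net in place of your $2\sqrt\varepsilon$-separated net.

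One small point deserves care: your separation threshold $2\sqrt\varepsilon$ exceeds $\varepsilon$ only for $\varepsilon<4$, so injectivity of $\sigma$ (via $e(\sigma(a),\sigma(a'))\ge 2\sqrt\varepsilon-\varepsilon>0$) is not guaranteed for large $\varepsilon$; and ``any continuous positive extension'' of $\varphi_2$ is not enough, since the implication must still hold there. The paper's choice $\delta=\varepsilon+\sqrt\varepsilon$ sidesteps this, since then $e(\sigma(a),\sigma(a'))\ge\delta-\varepsilon=\sqrt\varepsilon>0$ for every $\varepsilon>0$, yielding an explicit $\varphi_2$ valid on all of $(0,\infty)$. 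This is a trivial adjustment to your argument.
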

\begin{proof}First, let us assume that $\rho_{HL}(d,e) < \varepsilon$, that is, there are $d',e'\in\Met$ with $\rho_{GH}(d,d') + \rho_{L}(d',e') + \rho_{GH}(e',e) < \varepsilon$. By Fact~\ref{fact:GHbyCorrespondences}, there are correspondences $\Corr_1\subseteq \Nat\times\Nat$ and $\Corr_3\subseteq\Nat\times\Nat$ witnessing that $\rho_{GH}(d,d')< \varepsilon$ and $\rho_{GH}(e',e) < \varepsilon$. Further, let $f:M_{d'}\to M_{e'}$ be a bi-Lipschitz bijection witnessing that $\rho_{L}(d',e') < \varepsilon$. Consider now the correspondence
\[\begin{split}
\Corr := \big\{(i,j)\in\Nat\times\Nat \setsep & \text{there are } k,l\in\Nat\text{ such that }(i,l)\in\Corr_1, (k,j)\in\Corr_3, \\
& d'(l,f^{-1}(k)) < \varepsilon \text{ and } e'(f(l),k) < \varepsilon\big\}.
\end{split}\]
This is indeed a correspondence since given $i\in\Nat$ we find $l$ with $(i,l)\in\Corr_1$, pick $k\in\Nat$ with $e'(f(l),k) < \min\{\varepsilon,\tfrac{\varepsilon}{\Lip(f^{-1})}\}$ and find $j$ with $(k,j)\in\Corr_3$; thus, we have $(i,j)\in\Corr$ and similarly for every $j\in\Nat$ there is $i$ with $(i,j)\in\Corr$.

Fix $i,i',j,j'\in\Nat$ with $i\Corr j$ and $i'\Corr j'$. Then there are $l,l'\in \Nat$ and $k,k'\in \Nat$ with $i\Corr_1 l$, $i'\Corr_1 l'$, $k\Corr_3 j$, $k'\Corr_3 j'$, $d'(l,f^{-1}(k)) < \varepsilon$, $e'(f(l),k) < \varepsilon$, $d'(l',f^{-1}(k')) < \varepsilon$ and $e'(f(l'),k') < \varepsilon$. We have
\[\begin{split}
d(i,i') & \leq d'(l,l') + 2\varepsilon\leq d'(f^{-1}(k),f^{-1}(k')) + 4\varepsilon\leq \Lip(f^{-1})e'(k,k') + 4\varepsilon\\
& \leq \exp(\varepsilon)(e(j,j') + 2\varepsilon) + 4\varepsilon\\
& = e(j,j') + \big(\exp(\varepsilon) - 1\big)e(j,j') + 2\varepsilon\big(\exp(\varepsilon) + 2\big).
\end{split}\]
By symmetry, similar inequality holds when the roles of $d$ and $e$ are changed. Hence, if $\varphi_1(\varepsilon) = \exp(\varepsilon) - 1 + 2\varepsilon\exp(\varepsilon) + 4\varepsilon$, then $d$ and $e$ are $HL(\varphi_1(\varepsilon))$-close.

Conversely, let $\Corr\subseteq \Nat\times\Nat$ be a correspondence witnessing that $d$ and $e$ are $HL(\varepsilon)$-close. Put $\delta = \varepsilon + \sqrt{\varepsilon}$. Let $\mathcal{N}_{d}$ be a maximal $\delta$-separated set in $(\Nat,d)$. For every $i\in\mathcal{N}_d$, we pick some $r(i)\in\Nat$ such that $i\Corr r(i)$. Then we put $\mathcal{N}_e:=\{r(i)\setsep i\in\mathcal{N}_d\}$. Clearly,
\[
\rho_{GH}((\Nat,d),\mathcal{N}_d)\leq\delta.
\]
We claim that for every $j\in\Nat$ there is $j'\in\mathcal{N}_e$ with $e(j,j') < \delta + \varepsilon\cdot\max\{1,\delta\}$, which gives
\[
\rho_{GH}((\Nat,e),\mathcal{N}_e)\leq \delta + \varepsilon\cdot\max\{1,\delta\}.
\]
Indeed, if $j\in\Nat$ is given, there is $i$ with $i\Corr j$. Pick $i'\in\mathcal{N}_d$ with $d(i,i') < \delta$. Using \eqref{eq:HL1}, we obtain $e(j,r(i')) < \delta + \varepsilon\cdot\max\{1,\delta\}$. 

Now, let us compute the Lipschitz constant for $r$ and $r^{-1}$. Consider $i,i'\in\mathcal{N}_{d}$, $i\neq i'$. If $d(i,i')\geq 1$, by \eqref{eq:HL1}, we get $e(r(i),r(i'))\leq (1+\varepsilon)d(i,i')$. If $d(i,i')\leq 1$, by \eqref{eq:HL1} and using that $\mathcal{N}_d$ is $\delta$-separated, we get $e(r(i),r(i'))\leq d(i,i') + \varepsilon \leq (1+\tfrac{\varepsilon}{\delta})d(i,i')$. Hence, $\Lip(r)\leq \max\{1+\varepsilon,1+\tfrac{\varepsilon}{\delta}\}$. Note that for every $k,k'\in\mathcal{N}_d$, $k\neq k'$, with $e(r(k),r(k'))\leq 1$, by \eqref{eq:HL2}, we have $e(r(k),r(k'))\geq d(k,k') - \varepsilon\geq \delta - \varepsilon$; hence, similar computation gives $\Lip(r^{-1})\leq \max\{1+\varepsilon,1 + \tfrac{\varepsilon}{\delta - \varepsilon}\} = 1 + \max\{\varepsilon,\sqrt{\varepsilon}\}$. Thus, we have $\rho_{L}(\mathcal{N}_d,\mathcal{N}_e)\leq \log(1 + \max\{\varepsilon,\sqrt{\varepsilon}\})$. Finally, if 
\[
\varphi_2(\varepsilon) = 2\varepsilon + 2\sqrt{\varepsilon} + \log(1 + \max\{\varepsilon,\sqrt{\varepsilon}\}) + \varepsilon\cdot\max\{1,\varepsilon + \sqrt{\varepsilon}\},
\]
we get
\[
\rho_{HL}(d,e)\leq \delta + \log(1 + \max\{\varepsilon,\sqrt{\varepsilon}\}) + \delta + \varepsilon\cdot\max\{1,\delta\} = \varphi_2(\varepsilon).
\]
\end{proof}

\subsubsection{Uniform distance}

The following definition comes from \cite{DuKa}\footnote{More precisely, they define it without the logarithm which we add in order to satisfy the triangle inequality.}.

\begin{defin}[Uniform distance]\label{defin:UniformDist}Let $X$ and $Y$ be Banach spaces. If $u:X\to Y$  is uniformly continuous, we put
\[
	\Lip_{\infty}u:= \inf_{\eta>0}\;\sup\left\{\frac{\|u(x)-u(y)\|}{\|x-y\|}\setsep \|x-y\|\geq \eta\right\}.
\]
The \emph{uniform distance} between $X$ and $Y$ is defined as
\[
\rho_U(X,Y) = \inf \big\{\log ((\Lip_{\infty}u)(\Lip_{\infty}{u^{-1}}))\setsep u:X\to Y \text{ is uniform homeomorphism} \big\}.
\]
\end{defin}

Let us note the easy fact that we have
\[
\Lip_{\infty} u = \inf\big\{A>0 \setsep \exists B>0\; \forall x,y\in X:\; \|u(x)-u(y)\|\leq A\|x-y\| + B\big\}.
\]
The following is an analogue of Lemma~\ref{lem:LipschitzBySequenceOfCorr}.

\begin{lemma}\label{lem:UniformBySequenceOfCorr}
Let $\mu,\nu\in\Banach$. Then $\rho_U(X_{\mu},X_{\nu})<r$ if and only if there exist $B>0$, $r'\in (0,r)$ and a sequence of correspondences $\Corr_i\subseteq V\times V$ decreasing in inclusion such that
\begin{enumerate}
\item for every $i\in\Nat$ and every $v,w,v',w'\in V$ such that $v\Corr_i w$ and $v'\Corr_i w'$ we have $\nu(w-w')\leq \exp(r')\mu(v-v') + B$;
\item for $i\in\Nat$ and every $v,w,v',w'\in V$ such that $v\Corr_i w$ and $v'\Corr_i w'$ we have $\mu(v-v')\leq \nu(w-w') + B$;
\item for every $\varepsilon > 0$ there exist $\delta > 0$ and $i\in\Nat$ such that for every $v,v'\in V$ with $\mu(v-v')<\delta$ we have $\nu(w-w')<\varepsilon$ whenever $v\Corr_i w$ and $v'\Corr_i w'$;
\item for every $\varepsilon > 0$ there exist $\delta > 0$ and $i\in\Nat$ such that for every $w,w'\in V$ with $\nu(w-w')<\delta$ we have $\mu(v-v')<\varepsilon$ whenever $v\Corr_i w$ and $v'\Corr_i w'$.
\end{enumerate}
\end{lemma}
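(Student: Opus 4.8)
The plan is to follow the proof of Lemma~\ref{lem:LipschitzBySequenceOfCorr} closely, the point being that the role of the scale thresholds $2^{-i}$ there is now played by the additive constant $B$ (which handles the large-scale behaviour measured by $\Lip_\infty$) and by conditions (3)--(4) (which encode uniform continuity on the small scale). I will freely use the characterization $\Lip_\infty u=\inf\{A>0\setsep \exists B>0\ \forall x,y:\ \|u(x)-u(y)\|\le A\|x-y\|+B\}$ recorded just after Definition~\ref{defin:UniformDist}, together with the fact that rescaling $u\mapsto\lambda u$ multiplies $\Lip_\infty u$ by $\lambda$ and $\Lip_\infty u^{-1}$ by $\lambda^{-1}$, so that the product $\Lip_\infty u\cdot\Lip_\infty u^{-1}$ (hence $\rho_U$) is rescaling-invariant while the two factors individually are not.

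For the implication ``$\Rightarrow$'', assume $\rho_U(X_\mu,X_\nu)<r$ and fix a uniform homeomorphism $u:X_\mu\to X_\nu$ realizing a value below $r$. Exploiting rescaling, I first arrange $\Lip_\infty u^{-1}<1$ and $\Lip_\infty u<e^{r'}$ for a suitable $r'\in(0,r)$; this asymmetry is exactly what makes condition (1) carry a factor $e^{r'}$ while condition (2) carries none. Fix $B_1,B_2>0$ with $\nu(u(x)-u(y))\le e^{r'}\mu(x-y)+B_1$ and $\mu(u^{-1}(p)-u^{-1}(q))\le\nu(p-q)+B_2$, choose $\varepsilon_i\downarrow 0$ with $\varepsilon_1\le 1$, and, as in Lemma~\ref{lem:LipschitzBySequenceOfCorr}, set
\[
\Corr_i:=\big\{(v,w)\in V\times V\setsep \exists\,\tilde v\in V\quad \mu(v-\tilde v)<\varepsilon_i\ \&\ \nu(u(\tilde v)-w)<\varepsilon_i\big\}.
\]
Density of $V$ in both $X_\mu$ and $X_\nu$ together with continuity of $u$ and $u^{-1}$ shows each $\Corr_i$ is a correspondence, and they decrease in inclusion. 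Inserting $u(\tilde v),u(\tilde v')$ and using the triangle inequality gives conditions (1)--(2) with, say, $B:=B_1+B_2+2(1+e^{r'})+4$; conditions (3)--(4) are immediate from the uniform continuity of $u$ and of $u^{-1}$, choosing $i$ with $\varepsilon_i$ small relative to the prescribed $\varepsilon$.

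For ``$\Leftarrow$'', given $B$, $r'$ and the correspondences, define $\phi:V\to X_\nu$ by taking $\phi(v)$ to be the unique point of $\bigcap_i\overline{\{w\in V\setsep v\Corr_i w\}}$ (closures in $X_\nu$); this exists because these sets are nonempty (correspondences), closed, nested, and, by condition (3) applied with $v'=v$, have $\nu$-diameters tending to $0$. Define $\psi:V\to X_\mu$ symmetrically from the sets $\{v\in V\setsep v\Corr_i w\}$ using condition (4). Conditions (3) and (4) make $\phi$ and $\psi$ uniformly continuous on $(V,\mu)$ and $(V,\nu)$ respectively, so they extend to uniformly continuous $\bar\phi:X_\mu\to X_\nu$, $\bar\psi:X_\nu\to X_\mu$; letting $i\to\infty$ in conditions (1) and (2) along points of the relevant sections yields $\nu(\bar\phi(x)-\bar\phi(y))\le e^{r'}\mu(x-y)+B$ and $\mu(\bar\psi(p)-\bar\psi(q))\le\nu(p-q)+B$, i.e.\ $\Lip_\infty\bar\phi\le e^{r'}$ and $\Lip_\infty\bar\psi\le1$. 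It remains to check $\bar\psi\circ\bar\phi=\id_{X_\mu}$ and $\bar\phi\circ\bar\psi=\id_{X_\nu}$, and for this it suffices to treat $v\in V$: choosing $w_i$ with $v\Corr_i w_i$ one has $w_i\to\bar\phi(v)$, hence $\bar\psi(\bar\phi(v))=\lim_i\psi(w_i)$; since $v\in\{v'\in V\setsep v'\Corr_i w_i\}$ while $\psi(w_i)$ lies in the closure of this set, condition (4) forces $\mu(v-\psi(w_i))\to0$, so $\bar\psi(\bar\phi(v))=v$, and the other identity is symmetric. Thus $\bar\phi$ is a uniform homeomorphism with $\Lip_\infty\bar\phi\cdot\Lip_\infty\bar\phi^{-1}\le e^{r'}$, whence $\rho_U(X_\mu,X_\nu)\le r'<r$.

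The main obstacle I anticipate is the ``$\Leftarrow$'' direction, specifically proving that $\bar\phi$ and $\bar\psi$ are genuinely mutually inverse (so that $\bar\phi$ is a uniform \emph{homeomorphism}, not merely a uniformly continuous surjection), which is where the nested, decreasing structure of the $\Corr_i$ is essential; a secondary but fiddly point is the bookkeeping of the additive constants, which must be organized so that condition (2) ends with multiplicative constant exactly $1$ --- this is precisely what forces the product of the two $\Lip_\infty$'s below $e^r$ rather than merely below $e^{2r}$.
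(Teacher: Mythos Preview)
Your proposal is correct and follows essentially the same route as the paper: the paper likewise rescales the given uniform homeomorphism (normalizing to $\Lip_\infty u^{-1}=1$ rather than your $\Lip_\infty u^{-1}<1$, an immaterial difference) and defines $\Corr_i$ by the same two-step approximation $\mu(v-\tilde v)<\tfrac{1}{i}$, $\nu(u(\tilde v)-w)<\tfrac{1}{i}$; for the converse it defines $\phi,\psi$ via the intersections $\bigcap_i\overline{v\Corr_i}$ and $\bigcap_i\overline{\Corr_i w}$ exactly as you do. You have in fact supplied the details (notably the mutual-inverse verification) that the paper explicitly omits as ``straightforward'' and ``similar to the proof of Lemma~\ref{lem:LipschitzBySequenceOfCorr}''.
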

\begin{proof}
For the implication from the right to the left, for every $n\in V$ we define $\phi(n)\in X_{\nu}$ and $\psi(n)\in X_{\mu}$ as the unique element of $\bigcap_i \overline{n\Corr_i}$ and $\bigcap_i \overline{\Corr_i n}$, respectively. We leave to the reader to verify the simple fact that $\phi:(V,\mu)\rightarrow X_{\nu}$ is a uniformly continuous map with $\Lip_{\infty}\phi\leq\exp(r')$, which therefore extends to a uniformly continuous map $\bar \phi: X_\mu\rightarrow X_\nu$ and if $\bar \psi$ is defined analogously, then $\bar \phi=(\bar \psi)^{-1}$ and $\Lip_{\infty} \phi \Lip_{\infty} \psi < \exp(r)$.

For the other implication, suppose that we are given a uniform homeomorphism $u: X_{\mu}\rightarrow X_{\nu}$ such that $\Lip_{\infty} u^{-1} = 1$ and $\Lip_{\infty} u < \exp(r')$ for some $r' < r$. For every $i\in\Nat$ define correspondence $\Corr_i$ by
\[
\Corr_i:=\big\{(v,w)\in V\times V\setsep \exists \tilde{v}\in V\quad \mu(v-\tilde{v}) < \tfrac{1}{i}\;\; \&\;\; \nu(u(\tilde{v}) - w) < \tfrac{1}{i}\big\}.
\]
It is straightforward to check that $\Corr_i\subseteq \Nat\times\Nat$ are correspondences decreasing in inclusion satisfying all the conditions from the lemma. We omit further details, because this is similar to the proof of Lemma~\ref{lem:LipschitzBySequenceOfCorr}.
\end{proof}

\begin{remark}
As pointed out to us by the referee, it is natural to consider a distance $\rho'$ which would be defined as $\rho_U$ using $\Lip_\infty f$, where $f$ is a mapping which is not necessarily a uniform homeomorphism. Let us comment on this.

Recall that a mapping $f:X\to Y$ is a \emph{coarse Lipschitz map} if $\Lip_\infty f < \infty$, and that spaces $X$ and $Y$ are said to be \emph{coarse Lipschitz equivalent} (or sometimes also \emph{quasi-isometric}) if there are coarse Lipschitz maps $f:X\to Y$ and $g:Y\to X$ with $\sup_{y\in Y} \|f(g(y)) - y\| <\infty$ and $\sup_{x\in X} \|g(f(x)) - x\| < \infty$, in which case we say that $f$ and $g$ are \emph{coarsely inverse} to each other. It is well-known that two Banach spaces are coarse Lipschitz equivalent if and only if they have bi-Lipschitz equivalent nets (it holds even for general metric spaces, see e.g.\cite[Proposition 8.3.4]{BBI}). We leave to the reader the straightforward verification that for any Banach spaces $X$ and $Y$, $2\rho_N(X,Y)=\rho'(X,Y)$, where $\rho'$ is defined as $\inf\{\log((\Lip_\infty f)(\Lip_\infty g))\colon f: X\rightarrow Y, g: Y\rightarrow X\text{ are coarse Lipschitz maps coarsely inverse to each other}\}$, the factor $2$ is caused by different computation of distortion of Lipschitz maps, see Remark~\ref{rem:lip} for more details. Therefore we have covered this distance already above.
\end{remark}

\subsection{Analytic pseudometrics and reductions between them}

Here we recall the notions introduced in \cite{CDKpart1}, where we refer the interested reader for more information.

\begin{defin}
Let $X$ be a standard Borel space. A pseudometric $\rho:X\times X\to [0,\infty]$ is called \emph{an analytic pseudometric}, resp. \emph{a Borel pseudometric}, if for every $r > 0$ the set $\{(x,y)\in X^2\setsep \rho(x,y)<r\}$ is analytic, resp. Borel.

We emphasize that pseudometrics in our definition may attain $\infty$ as a value.
\end{defin}

Let us observe that distances considered in the previous subsection may be thought of as examples of analytic pseudometrics.

\begin{enumerate}[leftmargin=0cm,itemindent=.5cm,start=1,label={\bfseries \arabic*. }]
	\item {\bf Gromov-Hausdorff distance} Equip the Polish space $\Met$ with the Gromov-Hausdorff distance $\rho_{GH}$ defined in Definition \ref{defin:GHdistance}. We also consider the pseudometric $\rho_{GH}$ on the space $\Banach$ of codes for separable Banach spaces, denoted there by $\rho_{GH}^\Banach$. Note that for Banach spaces $X$ and $Y$, $\rho_{GH}^\Banach(X,Y)$ is defined as the Gromov-Hausdorff distance of the unit balls $B_X$ and $B_Y$ (see e.g. the introduction in \cite{KalOst}). Both $\rho_{GH}$ and $\rho_{GH}^\Banach$ are analytic, see \cite[Proposition 17]{CDKpart1}.

\smallskip

	\item {\bf Kadets distance} Equip the Polish space $\Banach$ with the Kadets distance $\rho_K$ defined in Definition \ref{defin:Kadetsdistance}. Using Lemma~\ref{lem:KadetsDescription}, it is not difficult to check that $\rho_K$ is analytic on $\Banach$. We leave the details to the reader.

\smallskip
    
    \item {\bf Lipschitz distance} Equip the Polish spaces $\Met$ and $\Banach$ with the Lipschitz distance $\rho_L$ introduced in Definition \ref{defin:LispchitzDistance}, where for $d,p\in\Met$ and $\mu,\nu\in\Banach$ by $\rho_L(d,p)$ and $\rho_L(\mu,\nu)$ we understand $\rho_L(M_d,M_p)$ and $\rho_L(X_\mu,X_\nu)$, respectively. We leave it to the reader to verify, using Lemma \ref{lem:LipschitzBySequenceOfCorr}, that $\rho_L$ is analytic on $\Met$ as well as on $\Banach$. Whenever we consider the pseudometric $\rho_L$ on $\Banach$ and we want to emphasize it, we write $\rho_L^\Banach$ instead of just $\rho_L$.

\smallskip

 \item {\bf Banach-Mazur distance} Equip the Polish space $\Banach$ by the Banach-Mazur distance $\rho_{BM}$ defined in Definition \ref{defin:BanachMazur}. We leave it to the reader to verify, using Lemma \ref{lem:BanachMazurVerifiableOnCountableSubspaces}, that $\rho_{BM}$ is an analytic pseudometric on $\Banach$.

\smallskip

\item {\bf Hausdorff-Lipschitz and net distances} Equip the Polish spaces $\Met$ and $\Banach$ with the Hausdorff-Lipschitz distance $\rho_{HL}$ from Definition \ref{defin:HausdorffLipschitz}. It is easy to check that for $d,p\in\Met$ we then have $$\rho_{HL}(d,p)=\inf\{\rho_{GH}(d,e_1)+\rho_L(e_1,e_2)+\rho_{GH}(e_2,p)\setsep e_1,e_2\in\Met\}.$$ Analogously, for elements from $\Banach$. It therefore follows from the fact that $\rho_{GH}$ and $\rho_L$ are analytic that $\rho_{HL}$ is analytic as well.

Moreover, equip the Polish space $\Banach$ with the net distance $\rho_N$ from Definition \ref{defin:netDistance}. It is clearly analytic as it coincides there with $\rho_{HL}$.

\smallskip

\item {\bf Uniform distance} Equip the Polish space $\Banach$ with the uniform distance $\rho_{U}$ from Definition \ref{defin:UniformDist}. We leave it to the reader to verify, using Lemma~\ref{lem:UniformBySequenceOfCorr}, that $\rho_U$ is an analytic pseudometric on $\Banach$.

\end{enumerate}

Now we recall the notion of reducibility between analytic pseudometrics as was introduced in \cite{CDKpart1}.

\begin{defin}\label{defin:Borel-UnifRedukce}
Let $X$, resp. $Y$ be standard Borel spaces and let $\rho_X$, resp. $\rho_Y$ be analytic pseudometrics on $X$, resp. on $Y$. We say that $\rho_X$ is \emph{Borel-uniformly continuous reducible} to $\rho_Y$, $\rho_X\leq_{B,u} \rho_Y$ in symbols, if there exists a Borel function $f: X\rightarrow Y$ such that, for every $\varepsilon>0$ there are $\delta_X>0$ and $\delta_Y>0$ satisfying
\[
\forall x,y\in X:\quad \rho_X(x,y)<\delta_X\Rightarrow \rho_Y(f(x),f(y))<\varepsilon
\]
and
\[
\forall x,y\in X:\quad \rho_Y(f(x),f(y))<\delta_Y\Rightarrow \rho_X(x,y)<\varepsilon.
\]
In this case we say that $f$ is a \emph{Borel-uniformly continuous reduction}. If $\rho_X\leq_{B,u} \rho_Y$ and $\rho_Y\leq_{B,u} \rho_X$, we say that $\rho_X$ is \emph{Borel-uniformly continuous bi-reducible} with $\rho_Y$ and write $\rho_X\sim_{B,u}\rho_Y$.

Moreover, if $f$ is injective we say it is an \emph{injective Borel-uniformly continuous reduction}.

If $f$ is an isometry from the pseudometric space $(X,\rho_X)$ into $(Y,\rho_Y)$, we say it is a \emph{Borel-isometric reduction}.

If there is $C>0$ such that for every $x,y\in X$ we have
\[
	\rho_Y(f(x),f(y))\leq C\rho_X(x,y)\quad\text{ and }\quad \rho_X(x,y)\leq C\rho_Y(f(x),f(y)),
\]
we say that $f$ is a \emph{Borel-Lipschitz reduction}.

If there are $\varepsilon > 0$ and $C>0$ such that for every $x,y\in X$ we have
\[\begin{split}
	\rho_X(x,y) < \varepsilon & \implies \rho_Y(f(x),f(y))\leq C\rho_X(x,y)\\
    \text{and}\quad \rho_Y(f(x),f(y)) < \varepsilon & \implies \rho_X(x,y)\leq C\rho_Y(f(x),f(y)),
\end{split}\]
we say that $f$ is a \emph{Borel-Lipschitz on small distances reduction}.
\end{defin}

The definition of a Borel-uniformly continuous reduction seems to be the most useful one. Sometimes we are able to demonstrate the reducibility between some pseudometrics by maps with stronger properties and this is the reason why we mentioned the remaining notions above.

\section{Reductions between pseudometrics on spaces of metric spaces}\label{sectionReduction1}

In this section we prove the reducibility results between pseudometrics on the spaces of metric spaces.

By \cite[Theorem 11]{CDKpart1}, we have the following.

\begin{thm}\label{thm:gh}Let $0<p<q$.
The following pseudometrics are mutually Borel-uniformly continuous bi-reducible: $\rho_{GH}$, $\rho_{GH}\upharpoonright \Met_p$, $\rho_{GH}\upharpoonright \Met^q$,  $\rho_{GH}\upharpoonright \Met_p^q$.
\end{thm}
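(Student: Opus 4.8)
The overall plan is a short cycle of reductions. The ``easy'' half consists of the four inclusion maps $\Met_p^q\hookrightarrow\Met_p\hookrightarrow\Met$ and $\Met_p^q\hookrightarrow\Met^q\hookrightarrow\Met$: each is Borel (even continuous) and leaves $\rho_{GH}$ literally unchanged, since the Gromov--Hausdorff distance of a metric space does not depend on the chosen countable parametrization; thus these are Borel-isometric reductions. By transitivity of $\leq_{B,u}$ it is then enough to produce a single Borel-uniformly continuous reduction of $\rho_{GH}$ on all of $\Met$ to $\rho_{GH}\upharpoonright\Met_p^q$; combining it with the inclusions yields $\rho_{GH}\upharpoonright\Met_p\leq_{B,u}\rho_{GH}\leq_{B,u}\rho_{GH}\upharpoonright\Met_p^q\leq_{B,u}\rho_{GH}\upharpoonright\Met_p$, and likewise with $\Met^q$, so all four pseudometrics become mutually bi-reducible. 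Hence the whole content is the one reduction $\rho_{GH}\leq_{B,u}\rho_{GH}\upharpoonright\Met_p^q$.

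For this reduction one cannot hope for a pointwise rescaling of the metric: any monotone reparametrization sending $[0,\infty)$ into a bounded interval has a non-uniformly continuous inverse, so it forgets large-scale information and fails the backward implication; and, dually, no ``add $p$ to every nonzero distance and rescale'' device can land in the uniformly discrete class $\Met_p^q$ without being broken by inputs that are already uniformly discrete (such as $\Nat$ with its usual metric, copies of which sit in $\Met$, $\Met_p$ and $\Met^q$ alike). The map $\Phi\colon\Met\to\Met_p^q$ must therefore be \emph{structural}, and the natural candidate is a \emph{multi-scale encoding}: fix a two-sided geometric family of scales $(2^k)_{k\in\Int}$ and, for each $k$, a pairwise essentially disjoint ``band'' $B_k\subseteq(p,q)$ together with a fixed ``scale marker'' value $r_k\in B_k$ (a distinguished distance let one read $k$ off the metric). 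Given $d\in\Met$, let $d^{(k)}$ be $\min\{d,2^k\}$ affinely rescaled to take values in $B_k$, further modified so that the value $r_k$ is always attained (a fixed ``ruler'' attached to the layer), and let $\Phi(d)$ be the metric on $\Nat\times\Int$ (reindexed to $\Nat$) which equals $d^{(k)}$ inside each layer $\Nat\times\{k\}$ and equals $q$ between distinct layers. One checks that $\Phi(d)\in\Met_p^q$ (the only delicate point is the triangle inequality across three layers, which uses that the cross-layer value is maximal) and that $\Phi$ is Borel. The forward direction is then immediate from the correspondence description of $\rho_{GH}$ (Fact~\ref{fact:GHbyCorrespondences}): a correspondence witnessing $\rho_{GH}(d,e)<r$, promoted layer by layer (and taken to be the identity on the rulers), has distortion less than $2r$ inside each layer (truncation and the rescalings are $1$-Lipschitz on distances, the latter even contractive) and distortion $0$ across layers, so $\rho_{GH}(\Phi(d),\Phi(e))<r$; in particular $\Phi$ is non-expanding.

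The backward direction is where the work lies, and pinpointing its difficulty is really the point: one must show that a bounded, uniformly discrete metric space can still remember the entire, unbounded-scale, Gromov--Hausdorff geometry of $M_d$. Granting this, the decoding proceeds by (i) observing that, since the cross-layer value $q$ strictly exceeds every within-layer distance by a fixed margin, a correspondence of distortion $<2\delta$ (for small $\delta$) maps each layer of $\Phi(d)$ into a single layer of $\Phi(e)$, and the induced map of layer-indices is a bijection; (ii) reading the scale markers to force this bijection to be the identity on all indices $k$ with $|k|$ below a threshold that tends to $\infty$ as $\delta\to0$; (iii) restricting the correspondence to matched layers and undoing the rescalings to recover that $\min\{d,2^k\}$ and $\min\{e,2^k\}$ are Gromov--Hausdorff close, with a loss that worsens with $|k|$, so that one must balance the admissible $k$ against $\delta$; (iv) a limiting/diagonal argument assembling these scalewise estimates into $\rho_{GH}(d,e)<\varepsilon$ (using, in spirit, that $\rho_{GH}$ is an increasing limit of its truncations). \textbf{Steps (iii)--(iv) are the main obstacle}: a large but finite truncation scale does not by itself control $\rho_{GH}(d,e)$, and compressing the scale-$2^k$ geometry (of diameter up to $2^k$) into a band of bounded width inevitably inflates the decoding error, so the band widths, the marker spacing, and the thresholds have to be engineered so that the scales one can genuinely recover still exhaust $\Met$ in the limit; this balancing is the heart of the proof. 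The measurability bookkeeping (that $\Phi$ is Borel and that all thresholds can be chosen uniformly in $d,e$) is routine. Alternatively, one may deduce the statement from the general reducibility machinery for analytic pseudometrics developed in \cite{CDKpart1}.
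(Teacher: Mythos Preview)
The paper does not prove this theorem at all: it simply cites \cite[Theorem~11]{CDKpart1}. Your final sentence, invoking that reference, is therefore exactly the paper's ``proof'', and is the only part of your proposal that actually establishes the statement.

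Your direct multi-scale construction, however, has a genuine gap that cannot be repaired by tuning the band widths or marker spacing. Consider the family $d_n,e_n\in\Met$ given by $d_n(i,j)=n$ and $e_n(i,j)=2n$ for $i\neq j$. Then $\rho_{GH}(d_n,e_n)=n/2\to\infty$. In your encoding, the only layers that distinguish $d_n$ from $e_n$ are those with $2^{k}$ of order $n$; for $2^{k}<n$ both truncations collapse to the constant metric $2^{k}$, and for $2^{k}\gg n$ the rescaling factor $w_k/2^{k}$ shrinks the discrepancy $n$ to at most $w_k$. Hence the layerwise distortion of $\Phi(d_n)$ versus $\Phi(e_n)$ is bounded by $\max_{k\approx\log_2 n} w_k$, and since $w_k\to 0$ as $|k|\to\infty$ (forced by $\sum_k w_k\le q-p$), one gets $\rho_{GH}(\Phi(d_n),\Phi(e_n))\to 0$. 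So the backward implication fails for every $\varepsilon\le 1$: there is no $\delta>0$ with $\rho_{GH}(\Phi(d),\Phi(e))<\delta\Rightarrow\rho_{GH}(d,e)<\varepsilon$. The heuristic ``$\rho_{GH}$ is an increasing limit of its truncations'' is true pointwise but not uniformly in $d,e$, and it is precisely this non-uniformity that kills the argument; your step~(iv) cannot be completed. The reduction in \cite{CDKpart1} avoids this by passing through an intermediate orbit pseudometric rather than by a direct metric-rewriting of this kind.
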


Next, we show that Lipschitz and Gromov-Hausdorff distances are Borel-uniformly continuous bi-reducible.

\begin{thm}\label{thm:ReductionGHboundedToLip}
Fix real numbers $p<q$. Then the identity map on $\Met_p^q$ is a Borel-uniformly continuous reduction from $\rho_{GH}$ to $\rho_L$ and from $\rho_L$ to $\rho_{GH}$.

Moreover, the identity is not only Borel-uniformly continuous, but also Borel-Lipschitz on small distances.
\end{thm}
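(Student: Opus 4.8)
The plan is to work on the compact-type space $\Met_p^q$, where both distances are tame: the Gromov-Hausdorff distance is controlled by the permutation relation $\simeq_\varepsilon$ (via Lemma~\ref{lem:GHequivalenceUniformlyDiscrete}, since $\rho_{GH}$ of spaces in $\Met_p$ is either small or detected by $\simeq_{2r}$), and the Lipschitz distance is controlled by a single bi-Lipschitz bijection of $(\Nat,d)$ onto $(\Nat,p)$ (rather than of completions), because the metrics are uniformly discrete with uniformly bounded values, so every countable dense subset is the whole space. The identity map on $\Met_p^q$ is trivially Borel, so the whole content is the quantitative two-sided comparison of $\rho_{GH}$ and $\rho_L$ on this space.

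First I would prove the direction $\rho_{GH} \to \rho_L$. Suppose $\rho_{GH}(d,e)$ is small, say less than $r < p/2$. By Lemma~\ref{lem:GHequivalenceUniformlyDiscrete} there is $\pi\in S_\infty$ with $|d(\pi(n),\pi(m)) - e(n,m)| \leq 2r$ for all $n,m$. Viewing $\pi$ as a bijection $T:(\Nat,e)\to(\Nat,d)$, for $n\neq m$ we have $e(n,m)\geq p$, hence
\[
\frac{d(\pi(n),\pi(m))}{e(n,m)} \leq \frac{e(n,m)+2r}{e(n,m)} \leq 1 + \frac{2r}{p},
\]
and similarly $d(\pi(n),\pi(m))\geq e(n,m)-2r \geq (1-\tfrac{2r}{p})e(n,m)$, so $\Lip(T^{-1}) \leq (1-\tfrac{2r}{p})^{-1}$ once $2r<p$. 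Thus $\rho_L(d,e) \leq \log\max\{1+\tfrac{2r}{p},\,(1-\tfrac{2r}{p})^{-1}\} = -\log(1-\tfrac{2r}{p})$, which tends to $0$ as $r\to 0$ and is $\leq C r$ for a constant $C$ depending only on $p$ once $r$ is below a fixed threshold (e.g. $r<p/4$). This gives both uniform continuity and Lipschitz-on-small-distances in this direction.

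For the converse direction $\rho_L \to \rho_{GH}$, suppose $\rho_L(d,e) < r$ with $r$ small; there is a bi-Lipschitz bijection $T:(\Nat,e)\to(\Nat,d)$ with $\Lip(T),\Lip(T^{-1}) \leq e^{r}$. Write $\pi = T$. For any $n,m$ (the case $n=m$ being trivial) we have $e^{-r} e(n,m) \leq d(\pi(n),\pi(m)) \leq e^{r} e(n,m)$, hence
\[
|d(\pi(n),\pi(m)) - e(n,m)| \leq (e^{r}-1)\,e(n,m) \leq (e^{r}-1)\,q,
\]
using the upper bound $q$ on the metric values. Therefore $e \simeq_{(e^r-1)q} d$, so by Lemma~\ref{lem:lehciImplikace} we get $\rho_{GH}(d,e) \leq \tfrac{q}{2}(e^{r}-1)$, which tends to $0$ as $r\to 0$ and is bounded by $C' r$ for $r$ below a fixed threshold. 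Combining the two directions establishes that the identity is a Borel-uniformly continuous reduction both ways, and the explicit linear bounds $-\log(1-\tfrac{2r}{p}) \leq C r$ and $\tfrac{q}{2}(e^r-1)\leq C' r$ valid for small $r$ give the Borel-Lipschitz-on-small-distances claim.

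The main obstacle, such as it is, is making sure Lemma~\ref{lem:GHequivalenceUniformlyDiscrete} applies: it requires $\rho_{GH}(d,e) < p/2$, so the reduction statement is only about small distances, which is exactly what the uniform-continuity and Lipschitz-on-small-distances formulations allow. One should also double-check that in the $\rho_L\to\rho_{GH}$ direction the bi-Lipschitz bijection between the dense countable pieces $(\Nat,d)$ and $(\Nat,e)$ — not just between the completions — is what the definition of $\rho_L$ on $\Met$ provides here; since $d,e\in\Met_p^q$ have no Cauchy sequences that are not eventually constant, the completions coincide with $(\Nat,d)$ and $(\Nat,e)$ and there is nothing to worry about. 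Everything else is the bookkeeping of the constants above.
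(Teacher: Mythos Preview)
Your proposal is correct and follows essentially the same approach as the paper: use Lemma~\ref{lem:GHequivalenceUniformlyDiscrete} together with the lower bound $p$ to pass from $\rho_{GH}$ to $\rho_L$, and use a bi-Lipschitz bijection together with the upper bound $q$ and Lemma~\ref{lem:lehciImplikace} for the converse. One cosmetic difference: in the first direction you bound $\Lip(T^{-1})$ by $(1-\tfrac{2r}{p})^{-1}$, whereas the paper observes that the symmetric argument (using $d\in\Met_p$ rather than $e\in\Met_p$) gives the slightly tighter $\Lip(\pi^{-1})\leq 1+\tfrac{2r}{p}$, yielding $\rho_L\leq \log(1+\tfrac{2r}{p})$ instead of $-\log(1-\tfrac{2r}{p})$; both bounds are of course linear in $r$ for small $r$, so this does not affect the conclusion.
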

\begin{proof}
Take some $d,e\in \Met_p^q$ and suppose that $\rho_{GH}(d,e)<\varepsilon < p/2$. By Lemma \ref{lem:GHequivalenceUniformlyDiscrete} there is a permutation $\pi\in S_\infty$ witnessing that $d\simeq_{2\varepsilon} e$. The permutation $\pi$ also defines a bi-Lipschitz map between $(\Nat,d)$ and $(\Nat,e)$. Let us compute the Lipschitz constant of $\pi$. We have
\[
\mathrm{Lip}(\pi)=\sup_{m\neq n\in\Nat} \frac{e(\pi(m),\pi(n))}{d(m,n)}\leq \sup_{m\neq n\in\Nat} \frac{d(m,n)+2\varepsilon}{d(m,n)}\leq 1 + \frac{2\varepsilon}{p}.
\]
The same argument shows that $\mathrm{Lip}(\pi^{-1})\leq 1 + \frac{2\varepsilon}{p}$ and so we have $\rho_L(d,e)\leq \log(1+\tfrac{2\varepsilon}{p})\leq \tfrac{2\varepsilon}{p}$.

Conversely, suppose that $\rho_L(d,e)<\varepsilon <1$. Then there is a bi-Lipschitz map $\pi:(\Nat,d)\to (\Nat,e)$ such that $\max\{\mathrm{Lip}(\pi),\mathrm{Lip}(\pi^{-1})\}\leq 1+\delta$, where $1+\delta<\exp(\varepsilon)$. So for any $m,n\in\Nat$ we have 
\[\begin{split}
|d(m,n)-e(\pi(m),\pi(n))| \leq \max\big\{|(1+\delta)e(\pi(m),\pi(n))-e(\pi(m),\pi(n))|,\\
|(1+\delta)d(m,n)-d(m,n)|\big\} = \max\big\{\delta e(\pi(m),\pi(n)),\delta d(m,n)\big\}\leq \delta q.
\end{split}
\]
Thus, we have $d\simeq_{\delta q} e$ and, by Lemma~\ref{lem:lehciImplikace}, $\rho_{GH}(d,e)\leq \tfrac{\delta q}{2} < \tfrac{q(\exp(\varepsilon)-1)}{2}\leq \tfrac{q(\exp(1)-1)}{2}\varepsilon$.
\end{proof}

\begin{thm}\label{thm:ReductionLipToGH}
There is an injective Borel-uniformly continuous reduction from $\rho_{L}$ on $\Met$ to $\rho_{GH}$ on $\Met$.

Moreover, the reduction is not only Borel-uniformly continuous, but also Borel-Lipschitz on small distances.
\end{thm}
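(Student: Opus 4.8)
The plan is to exhibit an explicit \emph{bundle} gadget $\Phi\colon\Met\to\Met$ which converts \emph{multiplicative} distortion of metric spaces into \emph{additive} distortion. Given $d\in\Met$ with completion $M=M_d$, put on $M\times\Rea$ the function
\[
D_d\big((x,s),(y,t)\big)\;=\;|s-t|\;+\;\min\bigl\{\,e^{\min\{s,t\}}\,d(x,y),\,1\,\bigr\}.
\]
First I would verify that $D_d$ is a metric: in the triangle inequality one splits according to whether the $\Rea$-coordinate $u$ of the middle point lies below, between, or above $s$ and $t$, and uses the identity $|s-u|+|t-u|-|s-t|=2(\min\{s,t\}-u)_+$ together with the elementary estimate $2\xi+e^{-\xi}\ge1$ ($\xi\ge0$) and the monotonicity of $a\mapsto\min\{\mu a,1\}$; this disposes of the one delicate case, where the path drops to a coarser scale and climbs back. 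The space $(M\times\Rea,D_d)$ is complete, separable and \emph{perfect}, and $\{x_i\}\times\Rat$ is a countable dense subset definable from a dense subset $\{x_i\}$ of $M$, so transporting $D_d$ along a fixed bijection $\{x_i\}\times\Rat\cong\Nat$ gives $\Phi(d)\in\Met$; as $D_d$ depends continuously on $d$, the map $\Phi$ is Borel. The key feature is that the \emph{slice} $M\times\{s\}$ carries the truncated rescaling $(M,\min\{e^sd,1\})$ of $M$, and letting $s$ range over all of $\Rea$ records $M$ at every scale (positive $s$ for small distances, negative $s$ for large ones — this is why the whole line, not a half-line, is needed); from two slices one recovers $d(x,y)$ and the \emph{absolute} value of $s$ via the profile $s\mapsto\min\{e^sd(x,y),1\}$, which is strictly increasing until it reaches $1$, so $\Phi$ is injective on isometry types, and injectivity on codes is then arranged by harmlessly encoding $d$ into $\Phi(d)$ in the usual way.

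For the easy inequality, assume $\rho_L(M,M')<\varepsilon$ and fix a bi-Lipschitz bijection $f\colon M\to M'$ of the completions with $e^{-\varepsilon}d\le d'\circ(f\times f)\le e^{\varepsilon}d$ (recall from Fact~\ref{fact:LipchitzNotWitnessedOnDenseSubsets} that $\rho_L$ must be witnessed on the completions). Then $F(x,s)=(f(x),s)$ is a bijection of $\Phi(M)=M\times\Rea$ onto $\Phi(M')=M'\times\Rea$, and a short three-case check gives $\bigl|\min\{\mu u',1\}-\min\{\mu u,1\}\bigr|\le e^{\varepsilon}-1$ whenever $u'\in[e^{-\varepsilon}u,e^{\varepsilon}u]$ and $\mu>0$; hence $F$ has additive distortion at most $e^{\varepsilon}-1$, and Fact~\ref{fact:GHbyCorrespondences} yields $\rho_{GH}(\Phi(M),\Phi(M'))\le(e^{\varepsilon}-1)/2$, which is $\le\varepsilon$ for $\varepsilon\le1$. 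This already delivers one half of "Borel-uniformly continuous" and one half of "Borel-Lipschitz on small distances".

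The substantial half is the converse. Suppose $\rho_{GH}(\Phi(M),\Phi(M'))<\delta$ with $\delta$ small; since $\Phi(M),\Phi(M')$ are perfect, Remark~\ref{rem:GHPerfectSpaces} (cf. Lemma~\ref{lem:GHPerfectSpaces}) supplies a bijection $\Psi\colon M\times\Rea\to M'\times\Rea$ with $\sup\bigl|D_d-D_{d'}\circ(\Psi\times\Psi)\bigr|<2\delta$, and the job is to "straighten" $\Psi$ into a bi-Lipschitz bijection of $M$ onto $M'$. Each fibre $\{x\}\times\Rea$ is an isometric copy of $\Rea$, and the $2\delta$-distortion bound forces: (i) $\Psi$ carries each fibre into an $O(\delta)$-neighbourhood of a single fibre of $\Phi(M')$, and (ii) $\Psi$ preserves the height function up to one \emph{global} additive constant $h$ and without reflection. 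Step (ii) is exactly where it matters that the factor $e^{\min\{s,t\}}$ destroys both the translation and the reflection symmetry of the $\Rea$-direction: a height shift by $h$ forces $d'\approx e^{-h}d$, and a slice comparison then forces $|h|=O(\delta)$, so there is no "hidden shift" making $\rho_{GH}$ of the gadgets small while $\rho_L(M,M')$ stays large. Let $\phi\colon M\to M'$ be the bijection read off from $\Psi$ on fibres; evaluating the distortion of $\Psi$ on the slice at the scale $s$ with $e^sd(x,y)=\tfrac12$ (or $e^sd'(\phi x,\phi y)=\tfrac12$), i.e. in the unclamped linear regime, converts the additive $O(\delta)$ bound into $|\log d(x,y)-\log d'(\phi x,\phi y)|=O(\delta)$ for all $x,y$, whence $\rho_L(M,M')=O(\delta)$.

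I expect the main obstacles to lie entirely in this last direction: establishing the fibre structure up to $O(\delta)$ — above all the "no global shift of heights" claim, which is precisely what upgrades the reduction from merely uniformly continuous to Lipschitz on small distances — together with the bookkeeping around the scales where $e^sd$ is close to $1$ (clamping) and around passing from the only approximately fibre-preserving $\Psi$ to an honest bi-Lipschitz bijection of the completions, where perfectness of $M\times\Rea$ and the freedom to perturb $\Psi$ come in. Combining the two directions gives the injective Borel-uniformly continuous reduction, and the linear estimates obtained for small $\delta$ and small $\varepsilon$ give, with $C$ and $\varepsilon_0$ the obvious maximum and minimum, the asserted "Borel-Lipschitz on small distances" strengthening, in parallel with the identity map in Theorem~\ref{thm:ReductionGHboundedToLip}.
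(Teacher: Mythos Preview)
Your gadget is essentially the same as the paper's --- the paper uses $(\Nat\times\mathbb Z)\cup\{\clubsuit\}$ with
\[
\tilde d\big((i,k),(j,l)\big)=|10k-10l|+\min\{1,2^{\min\{k,l\}}d(i,j)\},
\]
which is a discretised version of your $D_d$. The forward estimate and the strategy of reading a bi-Lipschitz map off from a low-distortion correspondence by using the fibre structure are the same. But there is a fatal gap in your converse direction, and it is exactly the point you flag as the crux (``no global shift of heights'').

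Your space $\Phi(M)=(M\times\Rea,D_d)$ has a built-in one-parameter family of isometries onto rescalings of $M$: the height translation $\tau_h(x,s)=(x,s+h)$ satisfies
\[
D_{cd}\big((x,s),(y,t)\big)=D_d\big((x,s+\log c),(y,t+\log c)\big),
\]
so $\Phi(M)$ and $\Phi\big((M,cd)\big)$ are isometric for every $c>0$. Hence $\rho_{GH}(\Phi(M),\Phi(M'))=0$ whenever $M'$ is a rescaling of $M$, while $\rho_L$ need not vanish on rescalings: for instance $\rho_L([0,1],[0,2])=\log 2$. So $\Phi$ is not a reduction, and your claimed ``slice comparison then forces $|h|=O(\delta)$'' cannot work --- with $\delta=0$ you would be forcing $h=0$, contradicting the isometry above. (Relatedly, your argument that $\Phi$ is ``injective on isometry types'' breaks for the same reason: the profile $s\mapsto\min\{e^sd(x,y),1\}$ determines $d(x,y)$ only once you know the $s$-labelling, and the abstract metric on $\Phi(M)$ determines that labelling only up to translation.)

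The paper cures exactly this by adjoining an anchor point $\clubsuit$ at distance $|10k+4|+1$ from level $k$. This is the \emph{only} substantive difference from your construction, but it is essential: it pins down level $0$ (the minimum distance to $\clubsuit$ is $5$, achieved only at $k=0$) and thereby kills the translation freedom; any low-distortion correspondence must match $\clubsuit$ to $\clubsuit$ and then preserve levels exactly. Once levels are pinned, the rest of your outline --- reading off per-level correspondences, chaining them into the decreasing sequence required by Lemma~\ref{lem:LipschitzBySequenceOfCorr}, and extracting the multiplicative bound at the scale where $e^s d(x,y)\approx\tfrac12$ --- goes through just as in the paper. If you want to keep the continuous-$\Rea$ version, you can likewise adjoin a single point at distance, say, $|s+1|+1$ from $(x,s)$; this breaks both translation and reflection and the remainder of your argument should then succeed.
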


\begin{proof}
For every $ d \in \Met $, we define a metric $ \tilde{d} $ on $ (\mathbb{N} \times \mathbb{Z}) \cup \{ \clubsuit \} $ by
$$ \tilde{d}\big( (i,k), (j,l) \big) = |10k - 10l| + \min \big\{ 1, 2^{\min\{k,l\}}d(i,j) \big\}, $$
$$ \tilde{d}\big( (i,k), \clubsuit \big) = |10k + 4| + 1. $$
We leave it to the reader to verify the elementary fact that $\tilde{d}$ is a metric.

Since $ d \mapsto \tilde{d} $ is an injective continuous mapping from $ \Met $ into $ \mathbb{R}^{((\mathbb{N} \times \mathbb{Z}) \cup \{ \clubsuit \})^{2}} $, it is easy to show that there is an injective continuous mapping $ f : \Met \to \Met $ such that $ M_{f(d)} $ is isometric to the completion of $ ((\mathbb{N} \times \mathbb{Z}) \cup \{ \clubsuit \}, \tilde{d}) $. Hence, to prove the theorem, it is sufficient to show that
$$ \rho_{GH}(\tilde{d}, \tilde{e}) \leq (\exp \rho_{L}(d, e)) - 1 $$
and
$$ \rho_{GH}(\tilde{d}, \tilde{e}) < 1/4 \quad \Rightarrow \quad \rho_{L}(d, e) \leq \log (1 + 24 \rho_{GH}(\tilde{d}, \tilde{e})) $$
for every $ d, e \in \Met $.

Assume that $ (\exp \rho_{L}(d, e)) - 1 < \varepsilon $ and pick some $ L : M_{d} \to M_{e} $ with $ \Lip L < 1 + \varepsilon $ and $ \Lip L^{-1} < 1 + \varepsilon $. We define a correspondence
$$ \Corr = \{ (\clubsuit, \clubsuit) \} \cup \big\{ \big( (i, k), (j, k) \big) : d(i, L^{-1}(j)) < 2^{-k-1} \varepsilon, e(L(i), j) < 2^{-k-1} \varepsilon \big\}. $$
Our aim is to show that $ | \tilde{d}(a, a') - \tilde{e}(b, b') | < 2 \varepsilon $ whenever $ a \Corr b $ and $ a' \Corr b' $. In the case that $ a = a' = \clubsuit $ (and so $ b = b' = \clubsuit $), this is trivial. Assume that $ a \neq \clubsuit = a' $ (and so $ b \neq \clubsuit = b' $), and denote $ a = (i, k), b = (j, k) $. We want to show that $ | |10k + 4| + 1 - |10k + 4| - 1 | < 2 \varepsilon $, which is obvious. We obtain the same conclusion in the case $ a = \clubsuit \neq a' $. So, assume that $ a \neq \clubsuit \neq a' $ (and so $ b \neq \clubsuit \neq b' $), and denote $ a = (i, k), b = (j, k), a' = (i', k'), b' = (j', k') $. We want to show that $ | |10k - 10k'| + \min \{ 1, 2^{\min\{k,k'\}}d(i,i') \} - |10k - 10k'| - \min \{ 1, 2^{\min\{k,k'\}}e(j,j') \} | < 2 \varepsilon $, which can be slightly simplified to
$$ \big| \min \big\{ 1, 2^{\min\{k,k'\}}d(i,i') \big\} - \min \big\{ 1, 2^{\min\{k,k'\}}e(j,j') \big\} \big| < 2 \varepsilon. $$
Due to the symmetry, it is sufficient to show that the number under the absolute value is less than $ 2 \varepsilon $. If $ 1 \leq 2^{\min\{k,k'\}}e(j,j') $, then we just write $ \min \{ 1, 2^{\min\{k,k'\}}d(i,i') \} - 1 \leq 0 < 2 \varepsilon $. In the opposite case that $ 1 > 2^{\min\{k,k'\}}e(j,j') $, we write
\begin{align*}
d(i, i') & \leq d(L^{-1}(j), L^{-1}(j')) + d(i, L^{-1}(j)) + d(i', L^{-1}(j')) \\
 & < (1+\varepsilon) e(j, j') + 2^{-k-1} \varepsilon + 2^{-k'-1} \varepsilon \\
 & \leq (1+\varepsilon) e(j, j') + 2^{-\min\{k,k'\}-1} \varepsilon + 2^{-\min\{k,k'\}-1} \varepsilon
\end{align*}
and
\begin{align*}
\min \big\{ 1 & , 2^{\min\{k,k'\}}d(i,i') \big\} - \min \big\{ 1, 2^{\min\{k,k'\}}e(j,j') \big\} \\
 & \leq 2^{\min\{k,k'\}}d(i,i') - 2^{\min\{k,k'\}}e(j,j') \\
 & < 2^{\min\{k,k'\}} (1+\varepsilon) e(j, j') + 2^{-1} \varepsilon + 2^{-1} \varepsilon - 2^{\min\{k,k'\}}e(j,j') \\
 & = 2^{\min\{k,k'\}} \varepsilon \cdot e(j, j') + \varepsilon < \varepsilon + \varepsilon = 2\varepsilon.
\end{align*}
By Fact~\ref{fact:GHbyCorrespondences}, this completes the verification of $ \rho_{GH}(\tilde{d}, \tilde{e}) \leq \varepsilon $.

Now, assume that $ \rho_{GH}(\tilde{d}, \tilde{e}) < \varepsilon < 1/4 $ for some $ d, e \in \Met $. This is witnessed by a correspondence $ \Corr \subseteq [(\mathbb{N} \times \mathbb{Z}) \cup \{ \clubsuit \}]^{2} $ provided by Fact~\ref{fact:GHbyCorrespondences}. We verify first that $ \clubsuit \Corr \clubsuit $, showing that there is no $ m $ with $ m \Corr \clubsuit $ but $ \clubsuit $. If $ m \Corr \clubsuit $, then there are points which have distance to $ m $ in $ (5-2\varepsilon, 5+2\varepsilon) $ and in $ (7-2\varepsilon, 7+2\varepsilon) $. If two points have distance in $ (3, 7) $, then these points belong to $ (\mathbb{N} \times \{ 0 \}) \cup \{ \clubsuit \} $. As $ (5-2\varepsilon, 5+2\varepsilon) \subseteq (3, 7) $, we obtain $ m \in (\mathbb{N} \times \{ 0 \}) \cup \{ \clubsuit \} $. The case $ m \in \mathbb{N} \times \{ 0 \} $ is also excluded, since distances of these points to other points belong to the set $ [0, 1] \cup \{ 5 \} \cup [10, \infty) $, which is disjoint from $ (7-2\varepsilon, 7+2\varepsilon) $.

It follows that
\begin{equation} \label{liptogh1}
(i,k) \Corr (j,l) \quad \Rightarrow \quad k = l.
\end{equation}
Indeed, we have $ 2 > 2\varepsilon > | \tilde{d}((i,k), \clubsuit) - \tilde{e}((j,l), \clubsuit) | = ||10k+4| - |10l+4|| $, and this is possible only if $ k = l $.

By \eqref{liptogh1}, the relations
$$ \Corr_{k} = \big\{ (i, j) \in \mathbb{N}^{2} : (i,k) \Corr (j,k) \big\}, \quad k \in \mathbb{Z}, $$
are correspondences. Let us show that
\begin{equation} \label{liptogh2}
i \Corr_{k} j, \; i' \Corr_{k} j' \; \& \; d(i, i') \leq 2^{-k-1} \quad \Rightarrow \quad e(j, j') \leq d(i, i') + 2^{1-k} \varepsilon.
\end{equation}
Using $ \tilde{e}((j,k), (j',k)) < \tilde{d}((i,k), (i',k)) + 2\varepsilon $, we obtain
$$ \min \big\{ 1, 2^{k}e(j, j') \big\} < \min \big\{ 1, 2^{k}d(i, i') \big\} + 2\varepsilon \leq 2^{k}d(i, i') + 2\varepsilon. $$
In particular, using $ d(i, i') \leq 2^{-k-1} $, the minimum on the left hand side is less than $ 1/2 + 2\varepsilon $, hence less than $ 1 $ and equal to $ 2^{k}e(j, j') $. Thus, \eqref{liptogh2} follows.

Further, we show that
\begin{equation} \label{liptogh3}
i \Corr_{k} j \; \& \; i \Corr_{k'} j' \quad \Rightarrow \quad e(j, j') \leq 2^{1-\min\{k,k'\}} \varepsilon.
\end{equation}
Using $ | \tilde{e}((j,k), (j',k')) - \tilde{d}((i,k), (i,k')) | < 2\varepsilon $, we obtain $ | |10k - 10k'| + \min \{ 1, 2^{\min\{k,k'\}}e(j, j') \} - |10k - 10k'| | < 2\varepsilon $. That is,
$$ \min \{ 1, 2^{\min\{k,k'\}}e(j, j') \} < 2\varepsilon, $$
which gives \eqref{liptogh3}.

Let us consider the decreasing sequence of correspondences given by
$$ \Corr^{*}_{s} = \bigcup_{k=s}^{\infty} \Corr_{k} = \big\{ (i, j) \in \mathbb{N}^{2} : (\exists k \geq s)\big( (i,k) \Corr (j,k) \big) \big\}, \quad s = 1, 2, \dots, $$
and let us observe that
\begin{equation} \label{liptogh4}
e\text{-}\diam (i \Corr^{*}_{s}) \leq 2^{1-s} \varepsilon, \quad d\text{-}\diam (\Corr^{*}_{s} j) \leq 2^{1-s} \varepsilon.
\end{equation}
The first inequality follows from \eqref{liptogh3}, the second one holds due to the symmetry.

We claim moreover that
\begin{equation} \label{liptogh5}
i \Corr^{*}_{s} j, \; i' \Corr^{*}_{s} j' \; \& \; d(i, i') \geq 2^{-s-2} \quad \Rightarrow \quad e(j, j') \leq (1+24\varepsilon) d(i, i').
\end{equation}
Let $ k \in \mathbb{Z} $ be such that $ 2^{-k-2} \leq d(i, i') < 2^{-k-1} $. We have $ k \leq s $, and so $ i \Corr^{*}_{k} j, i' \Corr^{*}_{k} j' $ in particular. Let us pick $ n, n' \in \mathbb{N} $ such that $ i \Corr_{k} n, i' \Corr_{k} n' $. We obtain from \eqref{liptogh2} that $ e(n, n') \leq d(i, i') + 2^{1-k} \varepsilon $. Also, we obtain from \eqref{liptogh4} that $ e(j, n) \leq 2^{1-k} \varepsilon $ and $ e(j', n') \leq 2^{1-k} \varepsilon $. By the triangle inequality,
$$ e(j, j') \leq d(i, i') + 2^{1-k} \varepsilon + 2^{1-k} \varepsilon + 2^{1-k} \varepsilon = d(i, i') + 24 \cdot 2^{-k-2} \varepsilon \leq (1+24\varepsilon) d(i, i'), $$
which gives \eqref{liptogh5}. Let us note that, due to the symmetry, we have also
\begin{equation} \label{liptogh6}
i \Corr^{*}_{s} j, \; i' \Corr^{*}_{s} j' \; \& \; e(j, j') \geq 2^{-s-2} \quad \Rightarrow \quad d(i, i') \leq (1+24\varepsilon) e(j, j').
\end{equation}

Finally, applying Lemma~\ref{lem:LipschitzBySequenceOfCorr} and using \eqref{liptogh4}, \eqref{liptogh5} and \eqref{liptogh6}, we obtain $ \rho_{L}(d, e) \leq \log (1+24\varepsilon) $.
\end{proof}

\begin{thm}\label{thm:GhbiReducibleWithLipUniDiscrete}
The pseudometrics $\rho_{GH}$ on $\Met$ and $\rho_L$ on $\Met$ are Borel-uniformly continuous bi-reducible.
\end{thm}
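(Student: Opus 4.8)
The plan is to obtain bi-reducibility by pure bookkeeping, composing reductions that are already available, with no new construction required. One direction, namely $\rho_L\leq_{B,u}\rho_{GH}$ on $\Met$, is precisely the content of Theorem~\ref{thm:ReductionLipToGH}, so the only thing left to argue is $\rho_{GH}\leq_{B,u}\rho_L$ on $\Met$.

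For that inequality I would fix any pair $0<p<q$ (say $p=1$, $q=2$) and chain together three Borel-uniformly continuous reductions. First, by Theorem~\ref{thm:gh} we have $\rho_{GH}\leq_{B,u}\rho_{GH}\upharpoonright\Met_p^q$, witnessed by some Borel map $g:\Met\to\Met_p^q$. Second, by Theorem~\ref{thm:ReductionGHboundedToLip} the identity on $\Met_p^q$ is a Borel-uniformly continuous reduction (in fact Borel-Lipschitz on small distances) from $\rho_{GH}\upharpoonright\Met_p^q$ to $\rho_L\upharpoonright\Met_p^q$. Third, the inclusion $\iota:\Met_p^q\hookrightarrow\Met$ is continuous, hence Borel; and since for $d,e\in\Met_p^q$ the quantity $(\rho_L\upharpoonright\Met_p^q)(d,e)$ is by definition $\rho_L(M_d,M_e)$, which is literally the same real number as the value $\rho_L(d,e)$ computed in $\Met$, the map $\iota$ is in fact a Borel-isometric reduction from $\rho_L\upharpoonright\Met_p^q$ to $\rho_L$ on $\Met$. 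Composing, $\iota\circ g:\Met\to\Met$ is Borel, and since $\leq_{B,u}$ is transitive (a routine check, carried out in \cite{CDKpart1}), $\iota\circ g$ is a Borel-uniformly continuous reduction from $\rho_{GH}$ to $\rho_L$ on $\Met$. Together with Theorem~\ref{thm:ReductionLipToGH} this gives $\rho_{GH}\sim_{B,u}\rho_L$ on $\Met$.

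The only point that needs a moment's care is the observation that $\rho_L\upharpoonright\Met_p^q$ genuinely is the restriction of the pseudometric $\rho_L$ on $\Met$, so that $\iota$ is isometric; but this is immediate from the fact that $\rho_L$ is a quantity attached to the pair of completions $(M_d,M_e)$ and is independent of any ambient coding space — in contrast to, say, the Gromov--Hausdorff or Kadets distances, where one genuinely infima over embeddings. I therefore expect no real obstacle here: the theorem is a direct consequence of Theorems~\ref{thm:gh}, \ref{thm:ReductionGHboundedToLip} and \ref{thm:ReductionLipToGH} plus transitivity of $\leq_{B,u}$.
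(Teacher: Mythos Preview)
Your proposal is correct and matches the paper's own proof almost verbatim: the paper also derives $\rho_{GH}\leq_{B,u}\rho_L$ by the chain $\rho_{GH}\leq_{B,u}(\rho_{GH}\upharpoonright\Met_p^q)\leq_{B,u}(\rho_L\upharpoonright\Met_p^q)\leq_{B,u}\rho_L$ via Theorems~\ref{thm:gh} and~\ref{thm:ReductionGHboundedToLip}, and invokes Theorem~\ref{thm:ReductionLipToGH} for the other direction. The only cosmetic difference is the choice of $p,q$ (the paper takes $\Met_2^4$), and you spell out the trivial isometric inclusion $\Met_p^q\hookrightarrow\Met$ explicitly while the paper leaves it implicit.
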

\begin{proof}
By Theorem~\ref{thm:gh} and Theorem \ref{thm:ReductionGHboundedToLip} we get
\[
\rho_{GH}\; \leq_{B,u}\; (\rho_{GH}\upharpoonright \Met_2^4)\; \leq_{B,u}\; (\rho_{L}\upharpoonright \Met_2^4)\; \leq_{B,u}\;\rho_{L}.
\]

For the other direction, we use Theorem \ref{thm:ReductionLipToGH}.
\end{proof}

Finally, using an analogous proof, we obtain that the Hausdorff-Lipschitz distance on metric spaces is reducible to the Gromov-Hausdorff distance. We will see later it is actually bi-reducible with it (Theorem~\ref{thmGHtoK}).

\begin{thm}\label{thm:ReductionHLToGH}
There is an injective Borel-uniformly continuous reduction from $\rho_{HL}$ on $\Met$ to $\rho_{GH}$ on $\Met$.
\end{thm}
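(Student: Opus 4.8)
The plan is to use a reduction even simpler than (though in spirit analogous to) the one in the proof of Theorem~\ref{thm:ReductionLipToGH}: instead of a multi-level ``telescope'', I would just compose every metric with one fixed concave homeomorphism of $[0,\infty)$. Concretely, define $f\colon\Met\to\Met$ by
$$ f(d)(m,n):=\log\big(1+d(m,n)\big). $$
Since $t\mapsto\log(1+t)$ is increasing, vanishes at $0$ and is concave, it is subadditive, so $\log(1+d(m,n))\le\log\big(1+d(m,k)+d(k,n)\big)\le\log(1+d(m,k))+\log(1+d(k,n))$; hence $f(d)$ is again a metric on $\Nat$, and $f$ maps $\Met$ into $\Met$. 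Moreover $f$ is clearly continuous (a fortiori Borel) and injective. Thus everything reduces to the two uniform-continuity estimates relating $\rho_{HL}(d,e)$ and $\rho_{GH}(f(d),f(e))$.

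For the forward estimate, assume $\rho_{HL}(d,e)<\varepsilon$. By Lemma~\ref{lem:HLByCorrespondences} there is a correspondence $\Corr\subseteq\Nat\times\Nat$ witnessing that $d$ and $e$ are $HL(\varphi_1(\varepsilon))$-close. The key elementary point is that, for $i\Corr j$ and $i'\Corr j'$, writing $s=d(i,i')$ and $t=e(j,j')$, the inequalities \eqref{eq:HL1} and \eqref{eq:HL2} yield $\tfrac{1+t}{1+s}\le 1+\varphi_1(\varepsilon)$ and $\tfrac{1+s}{1+t}\le 1+\varphi_1(\varepsilon)$ (checked by splitting into the cases $s\le 1$ and $s\ge 1$: the logarithm converts the additive perturbation at small scales and the multiplicative perturbation at large scales alike into a bounded additive perturbation). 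Consequently $|f(d)(i,i')-f(e)(j,j')|=|\log(1+s)-\log(1+t)|\le\log(1+\varphi_1(\varepsilon))$ for all matched pairs, so Fact~\ref{fact:GHbyCorrespondences} gives $\rho_{GH}(f(d),f(e))\le\tfrac12\log(1+\varphi_1(\varepsilon))$, which tends to $0$ as $\varepsilon\to 0$.

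For the backward estimate, assume $\rho_{GH}(f(d),f(e))<\varepsilon$. By Fact~\ref{fact:GHbyCorrespondences} there is a correspondence $\Corr\subseteq\Nat\times\Nat$ with $|\log(1+d(i,i'))-\log(1+e(j,j'))|<2\varepsilon$ whenever $i\Corr j$ and $i'\Corr j'$; equivalently $e^{-2\varepsilon}<\tfrac{1+d(i,i')}{1+e(j,j')}<e^{2\varepsilon}$. A one-line manipulation, using $1+t\le 2\max\{1,t\}$, turns this into precisely inequalities \eqref{eq:HL1} and \eqref{eq:HL2} with parameter $2(e^{2\varepsilon}-1)$, i.e. $d$ and $e$ are $HL\big(2(e^{2\varepsilon}-1)\big)$-close; by Lemma~\ref{lem:HLByCorrespondences} this gives $\rho_{HL}(d,e)<\varphi_2\big(2(e^{2\varepsilon}-1)\big)\to 0$ as $\varepsilon\to 0$. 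Together with the forward estimate, this exhibits $f$ as an injective Borel-uniformly continuous reduction.

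I do not expect a real obstacle here; the only thing needing care is the choice of the modification function. It must be concave with value $0$ at $0$ (so that composing with a metric again yields a metric) and must grow like $\log t$ at infinity (so that a multiplicative error in $d$ becomes a bounded additive error in $f(d)$, \emph{and conversely} a bounded error in $f(d)$ becomes a controlled $HL$-type error in $d$), which pins it down to $\log(1+t)$ up to harmless reparametrization; the bookkeeping with the $\max\{1,\cdot\}$ terms in the definition of $HL(\varepsilon)$-closeness is the only place a constant could slip. Unlike Theorem~\ref{thm:ReductionLipToGH}, one should not expect this reduction to be Borel-Lipschitz on small distances, since the modulus $\varphi_2$ furnished by Lemma~\ref{lem:HLByCorrespondences} already carries a $\sqrt{\cdot}$ term.
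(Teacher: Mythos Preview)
Your argument is correct, and it is genuinely different from (and considerably simpler than) the paper's proof. The paper reuses the multi-level ``telescope'' construction from Theorem~\ref{thm:ReductionLipToGH}, restricted to non-positive levels $k\in\Nat^-$, so that the existing estimates from that proof can be copied almost verbatim to handle the single correspondence $\Corr_0$; this yields the $HL$-closeness condition and then Lemma~\ref{lem:HLByCorrespondences} finishes. Your approach replaces that whole apparatus by the single observation that composing with $t\mapsto\log(1+t)$ converts $HL(\varepsilon)$-closeness into a uniform additive perturbation of the metric (and vice versa), which is exactly what the Gromov--Hausdorff distance detects via Fact~\ref{fact:GHbyCorrespondences}. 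Both routes hinge on Lemma~\ref{lem:HLByCorrespondences}; your reduction is more transparent and self-contained, while the paper's has the minor expository advantage of recycling computations already done. Your closing remark is also apt: neither argument yields a Borel-Lipschitz-on-small-distances reduction, because the $\sqrt{\varepsilon}$ term in $\varphi_2$ is the bottleneck.
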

\begin{proof}Denote by $\Nat^-$ the set $\{k\in\mathbb{Z}\setsep k\leq 0\}$. For every $ d \in \Met $, we define a metric $ \tilde{d} $ on $ (\mathbb{N} \times \Nat^-) \cup \{ \clubsuit \} $ by
$$ \tilde{d}\big( (i,k), (j,l) \big) = |10k - 10l| + \min \big\{ 1, 2^{\min\{k,l\}}d(i,j) \big\}, $$
$$ \tilde{d}\big( (i,k), \clubsuit \big) = |10k + 4| + 1. $$
Note that this is the same construction which we used already in the proof of Theorem~\ref{thm:ReductionLipToGH} with the exception that the underlying set is $ (\Nat \times \Nat^-) \cup \{ \clubsuit \} $ and in the proof of Theorem~\ref{thm:ReductionLipToGH} it is $ (\Nat \times \mathbb{Z}) \cup \{ \clubsuit \} $. Hence, to prove the theorem, it is sufficient to show that for every $\varepsilon > 0$ there are $\delta_1 > 0$ and $\delta_2 > 0$ such that 
$$ \rho_{GH}(\tilde{d}, \tilde{e}) < \delta_1 \Rightarrow \rho_{HL}(d, e) \leq \varepsilon$$
and
$$ \rho_{HL}(d, e) < \delta_2 \Rightarrow \rho_{GH}(\tilde{d}, \tilde{e}) \leq \varepsilon$$
for every $ d, e \in \Met $.

By Lemma~\ref{lem:HLByCorrespondences}, there exists $\delta' > 0$ such that
\[\begin{split}
\text{$d$ and $e$ are $HL(\delta')$-close}\quad & \Rightarrow \quad \rho_{HL}(d, e) < \varepsilon,\\
\rho_{HL}(d, e) < \delta' \quad & \Rightarrow \quad \text{$d$ and $e$ are $HL(\varepsilon)$-close}.
\end{split}\]
We claim that it suffices to put $\delta_1 = \min\{\tfrac{1}{5},\tfrac{\delta'}{24}\}$ and $\delta_2 = \delta'$.

Assume that $\rho_{GH}(\tilde{d}, \tilde{e}) < \delta_1$. This is witnessed by a correspondence $\Corr\subseteq [(\Nat \times \Nat^-) \cup \{ \clubsuit \}]^2$. Then, using verbatim the same arguments as in the proof of Theorem~\ref{thm:ReductionLipToGH}, the relation $\Corr_0 = \{(i,j)\in\Nat^2\setsep (i,0)\Corr(j,0)\}$ is a correspondence and whenever $i\Corr_0 j$ and $i'\Corr_0 j'$, we have
\[\begin{split}
	d(i,i')\leq \tfrac{1}{2}\quad & \Rightarrow \quad e(j,j')\leq d(i,i') + 2\delta_1,\\
	d(i,i')\geq \tfrac{1}{4}\quad & \Rightarrow \quad e(j,j')\leq (1+24\delta_1)d(i,i');    
\end{split}\]
and similarly for the symmetric situation when the roles of $d$ and $e$ are changed. In particular, $\Corr_0$ witnesses the fact that $d$ and $e$ are $HL(24\delta_1)$-close and since $24\delta_1 \leq \delta'$, we have $\rho_{HL}(d,e) < \varepsilon$.

Assume that $\rho_{HL}(d,e) < \delta_2$. Then $d$ and $e$ are $HL(\varepsilon)$-close, which is witnessed by a correspondence $\Corr'\subseteq \Nat^2$. Similarly as in the proof of Theorem~\ref{thm:ReductionLipToGH}, we define a correspondence
\[
	\Corr = \{(\clubsuit,\clubsuit)\}\cup \big\{((i,k),(j,k))\setsep i\Corr'j,\; k\leq 0\big\}.
\]
Our aim is to show that $|\tilde{d}(a,a') - \tilde{e}(b,b')| < 2\varepsilon$ whenever $a\Corr b$ and $a'\Corr b'$. Using verbatim the same arguments as in the proof of Theorem~\ref{thm:ReductionLipToGH}, it is sufficient to show that for $a=(i,k)$, $b=(j,k)$, $a'=(i',k')$, $b'=(j',k')$ with $1 > 2^{\min\{k,k'\}}e(j,j')$, $i\Corr'j$, $i'\Corr'j'$ we have
\begin{equation}\label{eq:HlWanted}
	2^{\min\{k,k'\}}d(i,i') - 2^{\min\{k,k'\}}e(j,j') < 2\varepsilon.
\end{equation}
Fix $a,a',b,b'$ as above. If $e(j,j')\leq 1$, using that $\Corr'$ witnesses $d$ and $e$ are $HL(\varepsilon)$-close, we get $2^{\min\{k,k'\}}(d(i,i') - e(j,j')) < 2\varepsilon$. On the other hand, if $e(j,j')\geq 1$, we get $2^{\min\{k,k'\}}(d(i,i') - e(j,j'))\leq 2^{\min\{k,k'\}}((1+\varepsilon)e(j,j') - e(j,j')) = 2^{\min\{k,k'\}}\varepsilon e(j,j') < \varepsilon$. Hence, \eqref{eq:HlWanted} holds and so the correspondence $\Corr$ witnesses that $\rho_{GH}(\tilde{d},\tilde{e})\leq \varepsilon$.
\end{proof}

\section{Reductions from pseudometrics on \texorpdfstring{$\Banach$}{B} to pseudometrics on \texorpdfstring{$\Met$}{M}}\label{sectionReduction2}
We start with a reduction from the Banach-Mazur distance to the Lipschitz distance. An essential ingredient is Lemma~\ref{lem:BanachMazurVerifiableOnCountableSubspaces}.
\begin{thm}\label{thm:BMtoGH}
There is a Borel-uniformly continuous reduction from $\rho_{BM}$ to $\rho_L$ on $\Met_p^q$, where $0<p<q$.

Moreover, the reduction is not only Borel-uniformly continuous, but also Borel-Lipschitz on small distances.
\end{thm}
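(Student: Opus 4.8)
The plan is to attach to each code $\nu\in\Banach$ of a Banach space $X$ a metric space $M(X)$ that records the norm of $X$ ``at all scales at once''. Concretely, I would take the underlying set to be a multi-scale version of the countable $\Rat$-vector space $V$ — something like $(V\times\Int)\cup\{\clubsuit\}$ with one distinguished anchor point $\clubsuit$ — and put on it a metric built level by level from truncated rescalings of $\|\cdot\|_X$, in the spirit of the metric $\tilde d$ used in the proof of Theorem~\ref{thm:ReductionLipToGH} but with $\|v-w\|_X$ playing the role of $d(i,j)$: the level-$k$ block carries (a truncation of) $2^{k}\|\cdot\|_X$, so that over all $k$ the whole norm is visible, while the cross-level distances are designed to (i) keep the space bounded and uniformly discrete, (ii) make the anchor and the level decomposition recognizable from the metric up to small distortion, and (iii) tie the vector $v$ at level $k$ to the same vector $v$ at the other levels, and more generally encode enough of the $\Rat$-linear structure of $V$ to survive small-distortion bi-Lipschitz maps. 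The assignment $\nu\mapsto$ (a code for $M(X)$) is continuous, hence Borel. To land in $\Met_p^q$ one either accommodates the countably many levels inside a bounded uniformly discrete region directly, or builds $M(X)$ into $\Met$ and composes with the bi-reduction between $\rho_L$ on $\Met$ and $\rho_L$ on $\Met_p^q$ provided by Theorems~\ref{thm:gh}, \ref{thm:ReductionGHboundedToLip} and \ref{thm:GhbiReducibleWithLipUniDiscrete}, which is again Lipschitz on small distances.

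For the ``easy'' half, assume $\rho_{BM}(X,Y)<r$. By Lemma~\ref{lem:BanachMazurVerifiableOnCountableSubspaces} there is a linear isomorphism $T\colon X\to Y$ with $T(V)=W$ and $\log\|T\|\|T^{-1}\|<r$; since $W$ is a $\Rat$-vector space we may rescale $T$ by a suitable positive rational without destroying $T(V)=W$, and so assume $\max\{\|T\|,\|T^{-1}\|\}<e^{r/2}(1+\delta)$ with $\delta$ arbitrarily small. As $T$ is a bijection of $V$ onto $W$, it induces a bijection of the underlying set of $M(X)$ onto that of $M(Y)$ fixing the anchor; because each level metric is a truncated rescaling of the norm (and truncations only lower distortion), this bijection extends to a bi-Lipschitz bijection of the completions with both Lipschitz constants at most $\max\{\|T\|,\|T^{-1}\|\}$. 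Hence $\rho_L(M(X),M(Y))\le\tfrac12\rho_{BM}(X,Y)$, the desired Lipschitz bound in this direction.

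For the reverse half I would start from a bi-Lipschitz bijection $\varphi$ between the completions with $\max\{\Lip\varphi,\Lip\varphi^{-1}\}<e^{r'}$, $r'$ small, and run a quantitative rigidity argument. First, the distance-profile of the anchor (as in the verification that $\clubsuit\,\mathcal{R}\,\clubsuit$ inside the proof of Theorem~\ref{thm:ReductionLipToGH}) forces $\varphi(\clubsuit)=\clubsuit$; then, as with \eqref{liptogh1}, $\varphi$ approximately preserves the level decomposition, so it restricts to small-distortion bi-Lipschitz bijections $\varphi_k$ of the level-$k$ blocks, i.e.\ of $(X,\min\{1,2^{k}\|\cdot\|_X\})$ onto the corresponding space for $Y$. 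Comparing $\varphi$ across consecutive levels bounds $\|\varphi_k v-\varphi_{k+1}v\|_Y$ by $O(r'2^{-k})$ for large $k$, so $\varphi_k v$ converges as $k\to\infty$ to a point $Tv\in Y$, and the same comparison (used at the largest level where the truncation is inactive for the pair in question) shows $T$ is a $(1+O(r'))$-bi-Lipschitz near-isometry of $V$ onto a dense subset of $W$, hence extends to $X\to Y$ with, after a harmless translation, $T(0)=0$. The remaining task is to upgrade this near-isometry to a genuine linear isomorphism with $\log\|T\|\|T^{-1}\|=O(r')$, using the $\Rat$-linear structure coded into $M(X)$: the multi-scale design lets one read off $\Rat$-linear combinations precisely at the scales where the truncation is inactive, and there the small-distortion hypothesis pins $T$ down up to a $\Rat$-affine error that vanishes linearly with $r'$; Lemma~\ref{lem:BanachMazurVerifiableOnCountableSubspaces} then packages the outcome as $\rho_{BM}(X,Y)=O(r')$.

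I expect this last upgrade — manufacturing an honest linear isomorphism with quantitatively controlled Banach--Mazur distortion out of the multi-scale data — to be the real obstacle: a bi-Lipschitz bijection of Banach spaces of small distortion need not be close to any affine map in general (approximate metric midpoints can fail to be unique), so the rigidity cannot come from Banach-space geometry and must be forced by combinatorial structure explicitly built into $M(X)$; arranging that structure to be simultaneously compatible with the truncations that keep $M(X)$ inside $\Met_p^q$ and robust under small-distortion bi-Lipschitz maps is the delicate point, and it is here that Lemma~\ref{lem:BanachMazurVerifiableOnCountableSubspaces} (reducing everything to the countable dense $\Rat$-subspace) is essential. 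By contrast, checking the metric axioms for $M(X)$, Borelness of $\nu\mapsto M(X)$, recognition of the anchor and the levels, and the cross-level comparison estimates are all routine and closely parallel the computations already performed for Theorem~\ref{thm:ReductionLipToGH}.
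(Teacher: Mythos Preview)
Your proposal has a genuine gap, and you have correctly located it yourself: the step where the small-distortion bi-Lipschitz bijection is ``upgraded'' to a linear isomorphism with controlled Banach--Mazur distortion. The multi-scale construction you describe --- levels $(V\times\mathbb{Z})\cup\{\clubsuit\}$ with level-$k$ metric built from a truncation of $2^k\|\cdot\|_X$ --- encodes only the \emph{metric} of $X$ at all scales, not its linear structure. Running your argument, the cross-level comparison produces a limit map $T:X\to Y$ that is $(1+O(r'))$-bi-Lipschitz and onto, but nothing more: this yields $\rho_L^{\Banach}(X,Y)=O(r')$, not $\rho_{BM}(X,Y)=O(r')$. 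Your sentence ``the multi-scale design lets one read off $\Rat$-linear combinations precisely at the scales where the truncation is inactive'' is not justified; at those scales you only read off \emph{distances} $\|v-w\|_X$, and a small-distortion map of metrics carries no information about sums or scalar multiples. As you note, approximate isometries of Banach spaces need not be close to linear maps, so no general Banach-space rigidity will rescue this. Lemma~\ref{lem:BanachMazurVerifiableOnCountableSubspaces} is useful only once you already have a candidate \emph{linear} map; it does not help manufacture one.

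The paper takes a quite different route that sidesteps this issue entirely. Rather than a multi-scale norm encoding, it builds a graph metric on a countable set that explicitly contains combinatorial \emph{gadgets} for the algebraic operations: for each $a\in V$ and $q\in\Rat$ there is a path of a length determined by $q$ joining $a$ to $qa$, and for each pair $a,b$ there is a small ``tripod'' configuration joining $a$, $b$ and $a+b$. The norm values $\nu(a-b)$ are recorded separately by families of paths whose edge-lengths lie in a narrow window (so that the whole space sits directly in $\Met_2^{15}$). The point is that these path and tripod gadgets are metrically rigid: any bi-Lipschitz bijection with distortion below $4/3$ must send paths to paths of the same combinatorial length and tripods to tripods, hence must send $qa$ to $qS(a)$ and $a+b$ to $S(a)+S(b)$ for some bijection $S:V\to V$, which is therefore genuinely $\Rat$-linear. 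The norm-recording paths then give $\|S\|,\|S^{-1}\|\le\Lip(T),\Lip(T^{-1})$ directly. So the linearity is not recovered analytically from the norm data but \emph{forced combinatorially} by the design of the gadget graph. This is precisely the ingredient your sketch does not supply.
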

\begin{proof}
Without loss of generality, we assume that $p=2$ and $q=15$. The structure of the proof is the following. First, we describe a construction which to each $\nu\in\Banach$ assigns
a metric space $M_\nu$. Next, we show that for $\nu,\lambda\in\Banach$ we have $\rho_L(M_\nu,M_\lambda)\leq \rho_{BM}(\nu,\lambda)$ and
\[
\rho_L(M_\nu,M_\lambda) < \log(\tfrac{4}{3}) \implies \rho_{BM}(\nu,\lambda)\leq 2\rho_L(M_\nu,M_\lambda).
\]
Finally, we show it is possible to make such an assignment in a Borel way.

Fix some countable sequence $(c_i)_{i\in\Nat}$ of positive real numbers such that for every positive real number $r>0$ there exists $i\geq 7$ such that $c_i\cdot r\in (2, 9/4)$. Also, let $\pi:\Rat\rightarrow \Nat\setminus\{1\}$ be some bijection and let $\preceq$ be some linear order on $V$. To each $\nu\in\Banach$ we assign a countable metric space $M_\nu$ with the following underlying set: $$V\;\cup\;\big\{p_{a,b}^{m,k}\setsep a\preceq b\in V, m\geq 7, k\leq m\big\}\;\cup$$ $$ \cup \; \big\{f_{a,q}^m\setsep a\in V,q\in \Rat,m\leq \pi(q)\big\}\;\cup \;\big\{x_{a,b}^i\setsep a\preceq b\in V, i\leq 3\big\}.$$

\medskip

The metric $d_\nu$ on $M_\nu$ is defined as follows.
\begin{itemize}
\item For every $a\neq b\in V$ we set $d_\nu(a,b)=15$.
\item For every $a\preceq b\in V$, $m\geq 7$ we define the number $K^m_{a,b}$ to be $\max\{2,\min\{3, c_m\cdot \nu(a-b)\}\}$. Then we set $d_\nu(a,p^{m,1}_{a,b})=d_\nu(p^{m,1}_{a,b},p^{m,2}_{a,b})=\ldots=d_\nu(p^{m,m}_{a,b},b)=K^m_{a,b}$.
\item For every $a\in V$ and $q\in\Rat$ we set $d_\nu(a,f_{a,q}^1)=7$, $d_\nu(f_{a,q}^1,f_{a,q}^2)=\ldots=d_\nu(f_{a,q}^{\pi(q)},qa)=10$.
\item For every $a\preceq b\in V$ we set $d_\nu(a,x_{a,b}^1)=d_\nu(b,x_{a,b}^2)=d_\nu(x_{a,b}^1,x_{a,b}^3)=d_\nu(x_{a,b}^2,x_{a,b}^3)=d_\nu(x_{a,b}^3,a+b)=5$.
\item On the rest of $M_\nu^2$, we take the greatest extension of $d_\nu$ defined above with $15$ as the upper bound, which is nothing but the graph metric (bounded by $15$).
\end{itemize}

We shall call the pairs of elements from $M_\nu$, for which the distance was defined directly before taking the extension, \emph{edges}. In order to simplify some notation, whenever we write $p_{b,a}^{m,k}$, where $a\preceq b$, we mean the element $p_{a,b}^{m,k}$. Also by $p_{a,b}^{m,0}$ we mean the element $a$, and by $p_{a,b}^{m,m+1}$ we mean the element $b$. We shall call the pairs $p^{m,k}_{a,b}, p^{m,k+1}_{a,b}$ \emph{neighbors}.

\medskip
Consider two norms $\nu,\lambda\in\Banach$. Denote the elements of $M_\lambda$ by $M_\lambda = $ $V\;\cup\;\{q_{a,b}^{m,k}\setsep a\preceq b\in V, m\geq 7, k\leq m\}\;\cup\; \{g_{a,q}^m\setsep a\in V,q\in \Rat,m\leq \pi(q)\}\;\cup \;\{y_{a,b}^i\setsep a\preceq b\in V, i\leq 3\}$ and the numbers $\max\{2,\min\{3, c_m\cdot \lambda(a-b)\}\}$ by $L^m_{a,b}$.

We claim that $\rho_L(M_\nu,M_\lambda)\leq \rho_{BM}(\nu,\lambda)$. If $\rho_{BM}(\nu,\lambda)<\varepsilon$, by Lemma \ref{lem:BanachMazurVerifiableOnCountableSubspaces}, there exists a surjective $\Rat$-linear isomorphism $T:(V,\nu)\rightarrow (V,\lambda)$ with $\|T\|\|T^{-1}\|<\exp(\varepsilon)$. Fix $\varepsilon'>0$. We may assume that $\min\{\|T\|,\|T^{-1}\|\}\geq 1-\varepsilon'$. We use $T$ to define a bi-Lipschitz bijection $T':M_\nu\rightarrow M_\lambda$. For every $a\in V$ we set $T'(a)=T(a)$ and for all elements of the form $p_{a,b}^{m,k}$, $f_{a,q}^m$, and $x_{a,b}^i$, with appropriate indices, whenever $T(a)\preceq T(b)$, we set $T'(p_{a,b}^{m,k})=q_{T(a),T(b)}^{m,k}$, $T'(f_{a,q}^m)=g_{T(a),q}^m$, and $T'(x_{a,b}^i)=y_{T(a),T(b)}^i$; similarly, if $T(b)\prec T(a)$, we set $T'(p_{a,b}^{m,k})=q_{T(b),T(a)}^{m+1-k,k}$, $T'(f_{a,q}^m)=g_{T(a),q}^m$, $T'(x_{a,b}^1)=y_{T(b),T(a)}^2$, $T'(x_{a,b}^2)=y_{T(b),T(a)}^1$ and $T'(x_{a,b}^3)=y_{T(b),T(a)}^3$.
Let us compute the Lipschitz constants of $T'$. If $a=b$, then obviously $\tfrac{L^m_{T(a),T(b)}}{K^m_{a,b}} = 1$. Otherwise, we have
\[\begin{split}
\frac{L^m_{T(a),T(b)}}{K^m_{a,b}} & =\frac{\max\{2,\min\{3, c_m\cdot \lambda(T(a)-T(b))\}\}}{\max\{2,\min\{3, c_m\cdot \nu(a-b)\}\}}\\
& \leq \max\left\{1,\frac{\lambda(T(a)-T(b))}{\nu(a-b)}\right\}\leq \max\{1,\|T\|\}\leq \|T\| + \varepsilon',
\end{split}\]
where in the first inequality we used the easy fact that for $x,y > 0$ we have $\frac{\max\{2,\min\{3,x\}\}}{\max\{2,\min\{3,y\}\}}\leq \max\{1,\frac{x}{y}\}$. It follows that $\Lip(T')\leq \|T\| + \varepsilon'$.  Indeed, it follows from the definition of $T'$ that it maps edges onto edges. Moreover, for every edge $(x,y)\in M_\nu^2$ we have $d_\lambda(T'(x),T'(y))\leq (\|T\| + \varepsilon')d_\nu(x,y)$, so the same inequality extends to the graph metrics -- the extensions of $d_\nu$ and $d_\lambda$ on the whole $M_\nu$ and $M_\lambda$ respectively. We obtain in particular that $ \Lip(T) \leq (\|T\| + \varepsilon') \| T^{-1} \|/(1 - \varepsilon') $. Since an analogous inequality holds for $ \Lip((T')^{-1}) $ and $ \varepsilon' > 0 $ was arbitrary, we have $ \rho_L(M_\nu,M_\lambda) \leq \log (\|T\| \|T^{-1}\|) < \varepsilon $. Thus, we conclude that $\rho_L(M_\nu,M_\lambda)\leq \rho_{BM}(\nu,\lambda)$.

\medskip
Conversely, assume that $\exp(\rho_L(M_\nu,M_\lambda))<4/3$, that is, there exists a bijection $T:M_\nu\rightarrow M_\lambda$ with $\Lip(T)<4/3$ and $\Lip(T^{-1})<4/3$. We will show that $\rho_{BM}(\nu,\lambda)\leq 2\rho_L(M_\nu,M_\lambda)$.

First we claim that $T$ maps $V\subseteq M_\nu$ bijectively onto $V\subseteq M_\lambda$. Indeed, the points $a\in V\subseteq M_\nu$ are characterized as those points $x$ of $M_\nu$ for which there exist infinitely many points $y\in M_\nu$ with $\nu(x-y)\leq 3$. On the other hand, the points from $M_\nu\setminus V$ are characterized as those points $x$ of $M_\nu$ for which there are at most two points distinct from $x$ of distance less than $4$ from $x$. Since $\Lip(T)<4/3$, we get $T(V)\subseteq V$ and similarly we have $T^{-1}(V)\subseteq V$, which proves the claim. We denote by $S$ the induced bijection between $(V,\nu)$ and $(V,\lambda)$.

We claim that $S$ is $\Rat$-linear. Let us check that it is homogeneous for all rationals, that is, $S(qa)=qS(a)$ for all $a\in V$ and $q\in\Rat$, which in particular gives that $S(0)=0$. For each $a\in V\subseteq M_\nu$ and $q\in\Rat$ there is a path of points $a,f_{a,q}^1,\ldots,f_{a,q}^{\pi(q)},qa$. The map $T$ must send this path to some path $T(a),g_{T(a),q'}^1,\ldots,g_{T(a),q'}^{\pi(q')}, q'T(a)$. However, $q'$ is determined by the length of the path which must be the same as the length of the former path. Therefore $q'=q$ and $S(qa)=T(qa)=qT(a)=qS(a)$. Next, we show that for $a\neq b\in V$ we have $S(a+b)=S(a)+S(b)$. There is a ``triangle of paths'' formed by the points $a,b,x_{a,b}^1,x_{a,b}^2,x_{a,b}^3,a+b$. $T$ must preserve this triangle, so it maps it to a triangle formed by the points $T(a),T(b),y_{T(a),T(b)}^1,y_{T(a),T(b)}^2,y_{T(a),T(b)}^3, T(a)+T(b)$. That shows that $S(a+b)=T(a+b)=T(a)+T(b)=S(a)+S(b)$.

It remains to compute the Lipschitz constant of $S$, resp. $S^{-1}$, as a map from $(V,\nu)$ to $(V,\lambda)$. In order to do it, we claim that for every $a\preceq b$, $ a \neq b $, $ m \geq 7 $ and $k\leq m$ we have $T(p^{m,k}_{a,b})=q^{m,k}_{S(a),S(b)}$ if $S(a)\preceq S(b)$, and $T(p^{m,k}_{a,b})=q^{m,m+1-k}_{S(b),S(a)}$ if $S(b)\preceq S(a)$. We only treat the former case, the other is treated analogously. First observe that $T(p^{m,1}_{a,b})=q^{m',k'}_{S(a),b'}$, for some $m'$ and $b'$, and $k'=1$ or $k'=m'$. Indeed, $p^{m,1}_{a,b}$ is a neighbor of $a$, so $d_\nu(a,p^{m,1}_{a,b})\leq 3$. Therefore $d_\lambda(S(a), T(p^{m,1}_{a,b}))<4$, so $S(a)$ and $T(p^{m,1}_{a,b})$ are also neighbors. Analogously, we show that for every $0\leq k\leq m$ we have that $T(p^{m,k}_{a,b})$ and $T(p^{m,k+1}_{a,b})$ are neighbors, which implies that $T$ indeed maps the `path' $a,p^{m,1}_{a,b},p^{m,2}_{a,b},\ldots,p^{m,m}_{a,b},b$ onto the path $S(a), q^{m,1}_{S(a),S(b)}, q^{m,2}_{S(a),S(b)},\ldots, q^{m,m}_{S(a),S(b)}, S(b)$.

We are now ready to compute the Lipschitz constants. We do it for $S$. Pick some $a\preceq b$, $ a \neq b $. We want to compute $\frac{\lambda(S(a)- S(b))}{\nu(a-b)}$. We consider only the case when $S(a)\preceq S(b)$, the other case is analogous. By the choice of $ (c_{i})_{i \in \mathbb{N}} $, there exists $ m \geq 7 $ such that $c_m\cdot \nu(a-b) \in (2,9/4)$. It follows that $d_\nu(a,p^{m,1}_{a,b})\in (2,9/4)$, so we have
\[
d_\lambda(S(a), q^{m,1}_{S(a),S(b)})=d_\lambda(T(a), T(p^{m,1}_{a,b}))\leq \Lip(T)d_\nu(a, p^{m,1}_{a,b})<3,
\]
which implies that
\[
\lambda(S(a) - S(b))\leq \tfrac{d_\lambda(S(a),q^{m,1}_{S(a),S(b)})}{c_m}\leq \tfrac{\Lip(T)d_\nu(a,p^{m,1}_{a,b})}{c_m}=\Lip(T)\nu(a - b).
\]
That shows that $\|S\| \leq \Lip(T)$. Analogously, we get $\|S^{-1}\| \leq \Lip(T^{-1})$; hence, we have $\rho_{BM}(\nu,\lambda)\leq 2\log \max \{ \Lip(T), \Lip(T^{-1}) \}$. Considering all bi-Lipschitz maps $T$ with $\Lip(T)<4/3$ and $\Lip(T^{-1})<4/3$, we obtain $\rho_{BM}(\nu,\lambda)\leq 2\rho_L(M_\nu,M_\lambda)$ whenever $\rho_L(M_\nu,M_\lambda) < \log(4/3)$.

\medskip
Finally, to verify that the map $\Banach \ni \nu\to (M_\nu,d_\nu)$ is Borel, let us denote by $N$ the underlying set of $M_\nu$ (which is the same for every $\nu\in\Banach$). Now  it suffices to fix some bijection $\phi:\mathbb{N} \to N$ and check that the distances in $M_\nu$ depend on distances of $\nu$ in a continuous (when considering $\nu$ as a member of $\Rea^V$) way.
\end{proof}
A consequence of the last theorem and Theorem~\ref{thm:ReductionGHboundedToLip} is that the Banach-Mazur distance is Borel-uniformly continuous reducible to the Gromov-Hausdorff distance. We will see later it is actually bi-reducible with it.
\begin{cor}
We have $\rho_{BM}\leq_{B,u} \rho_{GH}$.
\end{cor}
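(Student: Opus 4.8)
The plan is to obtain the reduction simply as a composition of reductions already at our disposal, so the whole argument is bookkeeping. First I would invoke Theorem~\ref{thm:BMtoGH} with the specific constants $p = 2$, $q = 15$ from its proof: this produces a Borel map $f \colon \Banach \to \Met$, $\nu \mapsto (M_\nu, d_\nu)$ (where $M_\nu$ is identified with an element of $\Met$ via a fixed bijection of its underlying set with $\Nat$), which is a Borel-uniformly continuous reduction from $\rho_{BM}$ to $\rho_L$. The key point to record here is that every nonzero distance in $M_\nu$ lies in $[2,15]$ — the directly prescribed distances are $15$, the numbers $K^m_{a,b}\in[2,3]$, $5$, $7$, $10$, and the remaining ones come from the graph metric capped at $15$, whose edges all have length $\geq 2$ — so $f$ in fact takes values in $\Met_2^{15}\subseteq\Met$, and the target pseudometric of $f$ may be taken to be $\rho_L\upharpoonright\Met_2^{15}$.

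Second, I would apply Theorem~\ref{thm:ReductionGHboundedToLip} with the same constants $p=2$, $q=15$: it asserts that the identity on $\Met_2^{15}$ is a Borel-uniformly continuous reduction from $\rho_L$ to $\rho_{GH}$, i.e. $\rho_L\upharpoonright\Met_2^{15}\leq_{B,u}\rho_{GH}\upharpoonright\Met_2^{15}$. Third, the inclusion $\iota\colon\Met_2^{15}\hookrightarrow\Met$ is Borel and preserves $\rho_{GH}$ exactly (a Borel-isometric reduction), hence witnesses $\rho_{GH}\upharpoonright\Met_2^{15}\leq_{B,u}\rho_{GH}$.

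It remains to observe the routine transitivity of $\leq_{B,u}$: if $g\colon X\to Y$ and $h\colon Y\to Z$ are Borel-uniformly continuous reductions, then $h\circ g$ is Borel, and for $\varepsilon>0$ one first chooses $\delta_Y,\delta_Y'>0$ witnessing uniform continuity of $h$ for $\varepsilon$, then $\delta_X>0$ witnessing it for $g$ and $\delta_Y$, and $\delta_X'>0$ witnessing it for $g$ and $\delta_Y'$; these $\delta_X,\delta_X'$ work for $h\circ g$. Chaining the three reductions above, $\iota\circ\mathrm{id}\circ f\colon\Banach\to\Met$ is a Borel-uniformly continuous reduction from $\rho_{BM}$ to $\rho_{GH}$, which is precisely $\rho_{BM}\leq_{B,u}\rho_{GH}$.

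I do not anticipate any genuine difficulty here: all the analytic content is already contained in Theorems~\ref{thm:BMtoGH} and~\ref{thm:ReductionGHboundedToLip}, and the only thing one must be careful about is that the image of the first reduction lands inside the doubly bounded class $\Met_2^{15}$ on which the second reduction is available, and that the two theorems are invoked with matching constants. (If one wished to also transfer injectivity or the ``Borel-Lipschitz on small distances'' property, one would need to track those through the composition as well, but neither is needed for the stated conclusion.)
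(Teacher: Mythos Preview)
Your proposal is correct and follows precisely the route the paper indicates: the corollary is stated immediately after Theorem~\ref{thm:BMtoGH} as a consequence of that theorem together with Theorem~\ref{thm:ReductionGHboundedToLip}, and you have simply spelled out the composition (including the observation that the image of the first reduction lands in $\Met_2^{15}$) in more detail than the paper does.
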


Next we show that the Hausdorff-Lipschitz distance on Banach spaces is reducible to the Gromov-Hausdorff distance. Again, we will see later it is
actually bi-reducible with it (Theorem~\ref{thmGHtoK}).

The reduction is obtainable already from Theorem~\ref{thm:ReductionHLToGH}. However, the proof which follows is in this concrete case more natural and gives a slightly better result, that is, the reduction is even Borel-Lipschitz on small distances. 
\begin{thm}\label{thm:ReductionHLdistToGH}
There is an injective Borel-uniformly continuous reduction from $\rho_{HL}$, equivalently $\rho_N$, on $\Banach$ to $\rho_{GH}$ on $\Met$.

Moreover, the reduction is not only Borel-uniformly continuous, but also Borel-Lipschitz on small distances. 
\end{thm}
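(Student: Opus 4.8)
The plan is to reuse, essentially verbatim, the ``multiscale coarsening'' construction $d\mapsto\tilde d$ from the proof of Theorem~\ref{thm:ReductionHLToGH} (so $\tilde d$ lives on $(\Nat\times\Nat^-)\cup\{\clubsuit\}$), but to apply it to a code of the Banach space $X_\nu$ itself rather than to an externally given metric space, and then to replace the passage through Lemma~\ref{lem:HLByCorrespondences} — which is only uniformly continuous because of its $\sqrt{\varepsilon}$-terms — by a direct argument exploiting that Banach spaces are cones, i.e. that $\rho_{HL}=\rho_N$ on $\Banach$ by Proposition~\ref{prop:HLandNetDistanceAreEqual}. Concretely, fix a bijection between $V$ and $\Nat$ and, for $\nu\in\Banach$, let $d_\nu\in\Met$ be the metric obtained by transporting $(v,w)\mapsto\nu(v-w)$, so that $M_{d_\nu}$ is isometric to $X_\nu$ and $\nu\mapsto d_\nu$ is continuous and injective; put $M_\nu:=M_{\widetilde{d_\nu}}$. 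Then $\nu\mapsto M_\nu$ is a composition of continuous maps followed by a fixed re-coding, hence Borel, and it is injective because $\widetilde{d_\nu}$ determines $d_\nu$ (one recovers $d_\nu(i,j)=2^m\,\widetilde{d_\nu}((i,-m),(j,-m))$ for $m$ large), which determines $\nu$. It remains to check that $\nu\mapsto M_\nu$ reduces $\rho_{HL}$ (equivalently $\rho_N$) to $\rho_{GH}$ and is Borel-Lipschitz on small distances.

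For $\rho_{GH}(M_\nu,M_\lambda)\le C\,\rho_{HL}(X_\nu,X_\lambda)$ on small distances: if $\rho_N(X_\nu,X_\lambda)<\delta$, pick nets $\Net_X\subseteq X_\nu$, $\Net_Y\subseteq X_\lambda$ and a bi-Lipschitz bijection $\psi:\Net_X\to\Net_Y$ with $\max\{\Lip\psi,\Lip\psi^{-1}\}<e^\delta$; since a rescaling of a net is again a net and $\rho_L$ is scale-invariant, we may assume the nets are $(\cdot,\eta)$-nets for an arbitrarily small $\eta>0$, and after a further small perturbation that they lie in the countable dense sets coding $X_\nu,X_\lambda$. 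The relation ``$x\Corr y$ iff $x$ is $\eta$-close to some $a\in\Net_X$ with $\psi(a)$ $\eta$-close to $y$'' is then a correspondence between these dense sets along which distances are distorted by a factor $\le e^\delta$ up to an additive error $O(\eta)$. Feeding this correspondence, together with $(\clubsuit,\clubsuit)$ and the identity on levels, into the definition of $\widetilde{(\cdot)}$, and using that the truncation $\min\{1,\cdot\}$ turns the multiplicative distortion into an additive one (exactly as in the first half of the proof of Theorem~\ref{thm:ReductionLipToGH}), Fact~\ref{fact:GHbyCorrespondences} gives $\rho_{GH}(\widetilde{d_\nu},\widetilde{d_\lambda})\le\tfrac12\big((e^\delta-1)+O(\eta)\big)$; letting $\eta\to0$ yields $\rho_{GH}(M_\nu,M_\lambda)\le\tfrac12(e^\delta-1)$, which is $\le C\delta$ for small $\delta$ and is finite for all $\delta$.

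For the reverse inequality $\rho_{HL}(X_\nu,X_\lambda)\le C\,\rho_{GH}(M_\nu,M_\lambda)$ on small distances, suppose $\rho_{GH}(\widetilde{d_\nu},\widetilde{d_\lambda})<\varepsilon<1/4$. By exactly the argument of Theorem~\ref{thm:ReductionHLToGH} (the $\clubsuit$-point pins the levels, and the level-$k$ correspondences $\Corr_k$ together with diameter estimates as in \eqref{liptogh4} give the large-scale control), $d_\nu$ and $d_\lambda$ are $HL(C'\varepsilon)$-close: there is a correspondence $\Corr$ between the dense sets coding $X_\nu,X_\lambda$ with $\|y-y'\|_\lambda\le\|x-x'\|_\nu+C'\varepsilon\max\{1,\|x-x'\|_\nu\}$ and symmetrically, whenever $x\Corr y$, $x'\Corr y'$. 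Now, instead of invoking Lemma~\ref{lem:HLByCorrespondences} we use that $X_\nu$ is a cone: let $\Net$ be a maximal $1$-separated subset of the dense set coding $X_\nu$ (a $(1,1)$-net of $X_\nu$), choose for each $n\in\Net$ some $\sigma(n)$ with $n\Corr\sigma(n)$, and set $\Net':=\sigma(\Net)$. Since all distances in $\Net$ are $\ge 1$, the additive error $C'\varepsilon$ becomes a multiplicative factor $1+O(\varepsilon)$, so $\sigma$ is injective and $\rho_L(\Net,\Net')\le\log\frac{1+O(\varepsilon)}{1-O(\varepsilon)}=O(\varepsilon)$; moreover $\Net'$ is readily seen to be a net of $X_\lambda$. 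Hence $\rho_{HL}(X_\nu,X_\lambda)=\rho_N(X_\nu,X_\lambda)\le\rho_L(\Net,\Net')=O(\varepsilon)$. Combining the two displayed estimates shows that $\nu\mapsto M_\nu$ is a Borel-uniformly continuous reduction, injective and Borel-Lipschitz on small distances.

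I expect the crux to be the reverse direction, at two points. First, extracting the $HL(C'\varepsilon)$-closeness of $d_\nu$ and $d_\lambda$ from a Gromov--Hausdorff correspondence between $\widetilde{d_\nu}$ and $\widetilde{d_\lambda}$ requires the same delicate level-bookkeeping and diameter estimates as in Theorem~\ref{thm:ReductionHLToGH} (one must check the $\clubsuit$-point is matched and the correspondence is level-preserving, and piece together a single correspondence controlling all scales). Second, and this is the step that actually buys the Lipschitz dependence and fails for general metric spaces: one must observe that for a cone one may, in the definition of $\rho_{HL}=\rho_N$, replace the space by \emph{any} of its nets, so that working with a $1$-separated net turns the additive error of an $HL$-correspondence into a multiplicative error of the same order, thereby avoiding the loss incurred by discretizing a general metric space at a suboptimal scale in Lemma~\ref{lem:HLByCorrespondences}. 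The Borelness and injectivity claims are routine once the construction is fixed.
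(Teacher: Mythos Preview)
Your approach is correct but genuinely different from the paper's. The paper's reduction is far more direct: to each $(X,\Norm_X)$ it simply assigns the truncated metric space $(X,d_X)$ with $d_X(x,y)=\min\{\|x-y\|_X,1\}$. For the forward direction the paper rescales nets to be $(\cdot,\varepsilon)$-dense (using the cone property) and then invokes Theorem~\ref{thm:ReductionGHboundedToLip} on $\Met^1_p$ directly. For the reverse direction it uses that $(X,d_X)$ is perfect, applies Remark~\ref{rem:GHPerfectSpaces} to turn the GH-correspondence into an actual bijection $\phi:X\to Y$, and then shows $\phi$ is large-scale bi-Lipschitz by chaining along short segments---here the geodesic/length structure of $X$ is used explicitly, not just the cone property.

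Your route instead recycles the multiscale construction $d\mapsto\tilde d$ on $(\Nat\times\Nat^-)\cup\{\clubsuit\}$ from Theorem~\ref{thm:ReductionHLToGH}, extracts $HL(C'\varepsilon)$-closeness from the level-$0$ correspondence, and then---this is the key improvement over Lemma~\ref{lem:HLByCorrespondences} you correctly identified---upgrades $HL$-closeness to a Lipschitz net-estimate by restricting to a $1$-separated net and using $\rho_{HL}=\rho_N$ on cones. This works, and it has the mild advantage of never leaving the category of correspondences (no bijection or perfectness is needed) and of making transparent exactly where the cone hypothesis enters. The price is a considerably heavier construction than the paper's one-line truncation $\min\{\Norm,1\}$, which already encodes the relevant ``coarsening at scale $1$'' without the level apparatus or the $\clubsuit$-point. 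Both arguments ultimately exploit the same phenomenon: in a cone one may freely choose the scale of the net, so the additive error in an $HL$-correspondence becomes multiplicative of the same order.
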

\begin{proof}
To every separable Banach space $(X,\Norm_X)$ we associate a metric space $(X,d_X)$ whose underlying set is unchanged, and for every $x,y\in X$ we set $d_X(x,y)=\min\{\|x-y\|_X, 1\}$. We claim that the map $(X,\Norm_X)\to (X,d_X)$ is the desired reduction.

Fix some separable Banach spaces $X$ and $Y$. Suppose first that $\rho_{HL}(X,Y)=\rho_N(X,Y)<K$, for some $K>0$, where the first equality follows from Proposition \ref{prop:HLandNetDistanceAreEqual}. So there exist nets $\Net_X\subseteq X$ and $\Net_Y\subseteq Y$ and a bi-Lipschitz map $T:\Net_X\rightarrow \Net_Y$ with $\log\max\{\Lip(T),\Lip(T^{-1})\}<K$. Pick any $\varepsilon>0$. By rescaling the nets $\Net_X$ and $\Net_Y$ if necessary we may assume (as in the proof of Proposition \ref{prop:HLandNetDistanceAreEqual}) the nets are an $(a,\varepsilon)$-net, resp. an $(a',\varepsilon)$-net, for some $a,a'>0$. Since $(\Net_X,d_X)$ and $(\Net_Y,d_Y)$ belong to $\Met^1_{\min(a,a')}$ we get from Theorem \ref{thm:ReductionGHboundedToLip} that $\rho_{GH}((\Net_X,d_X),(\Net_Y,d_Y))\leq (\exp(K)-1)/2$. Since $\rho_{GH}((X,d_X),(\Net_X,d_X))\leq \varepsilon$, $\rho_{GH}((Y,d_Y),(\Net_Y,d_Y))\leq \varepsilon$, and since $\varepsilon$ was arbitrary, we get that $\rho_{GH}((X,d_X),(Y,d_Y))\leq\tfrac{\exp(K)-1}{2}\leq \tfrac{\exp(1)-1}{2}K$ whenever $K<1$.

\medskip
Conversely, suppose that $\rho_{GH}((X,d_X),(Y,d_Y))<K$, where $K<1/4$. By Remark~\ref{rem:GHPerfectSpaces} there exists a bijection $\phi:X\rightarrow Y$ witnessing the Gromov-Hausdorff distance, i.e. for every $x,y\in X$ we have $|d_X(x,y)-d_Y(\phi(x),\phi(y))|<2K$. We aim to show that $\phi$ is large scale bi-Lipschitz for $(X,\Norm_X)$ and $(Y,\Norm_Y)$. Pick any $x,y\in X$ with $\|x-y\|_X\geq 1$. Find points $x_0=x,x_1,x_2,\ldots,x_{n-1},x_n=y$ such that $\sum_{i=0}^{n-1}\|x_i-x_{i+1}\| = \|x-y\|$, $n\leq 3\|x-y\|_X$ and for every $i<n$ we have $\|x_i-x_{i+1}\|\leq 1/2$. Notice that for every $i<n$ we have $\|x_i-x_{i+1}\|_X=d_X(x_i,x_{i+1})\leq 1/2$. So $d_Y(\phi(x_i),\phi(x_{i+1}))\leq d_X(x_i,x_{i+1})+2K<1$, therefore $\|\phi(x_i)-\phi(x_{i+1})\|_Y=d_Y(\phi(x_i),\phi(x_{i+1}))\leq \|x_i-x_{i+1}\|_X+2K$.

Now we compute 
\[\begin{split}
\|\phi(x)-\phi(y)\|_Y & \leq \sum_{i=0}^{n-1} \|\phi(x_i)-\phi(x_{i+1})\|_Y\leq\|x-y\|_X+2K(3\|x-y\|_X)\\
& = (1+6K)\|x-y\|_X.
\end{split}\]
Along with the analogous computations for $\phi^{-1}$ we get that $\max\{\Lip_1(\phi),\Lip_1(\phi^{-1})\}\leq 1+6K$, where $$\Lip_1(\phi)=\sup_{\substack{x,y\in X\\ \|x-y\|_X\geq 1}}\frac{\|\phi(x)-\phi(y)\|_Y}{\|x-y\|_X}$$
and the analogous definition holds for $\phi^{-1}$.
Now it suffices to choose some maximal $2$-separated set $\Net_X$ in $(X,\Norm_X)$, which is a net in $X$. Its image $\phi[\Net_X]$, denoted by $\Net_Y$, is a net in $Y$. Indeed, we claim that for each $x\neq y\in \Net_X$ we have $\|\phi(x)-\phi(y)\|_Y> 1$. Otherwise, there is $z\in Y$ such that $\|\phi(x)-z\|_Y\leq 1/2$ and $\|z-\phi(y)\|_Y\leq 1/2$. This implies that $\|x-y\|_X\leq \|x-\phi^{-1}(z)\|_X+\|\phi^{-1}(z)-y\|_X<\|\phi(x)-z\|_Y+\|z-\phi(y)\|_Y+4K\leq 2$, a contradiction. Finally, we claim that for every $y\in Y$ we can find $y'\in \Net_Y$ with $\|y-y'\|_Y<4$. Pick any $y\in Y$. Then there exist $x_1,x_2,x_3\in X$ and $x'\in\Net_X$ such that $\max\{\|\phi^{-1}(y)-x_1\|_X,\|x_1-x_2\|_X,\|x_2-x_3\|_X,\|x_3-x'\|_X\}\leq 1/2$. Set $y'=\phi(x')$. We get that $\|y-y'\|_Y\leq \|y-\phi(x_1)\|_Y+\|\phi(x_1)-\phi(x_2)\|_Y+\|\phi(x_2)-\phi(x_3)\|_Y+\|\phi(x_3)-y'\|_Y\leq 2 +8K < 4$. So we have verified that $\Net_Y$ is a net. It is bi-Lipschitz with $\Net_X$ as witnessed by $\phi$. So we get the estimate $\rho_{HL}((X,\Norm_X),(Y,\Norm_Y))=\rho_N((X,\Norm_X),(Y,\Norm_Y))\leq \log (1+6K)\leq 6K$.

\medskip
Finally, we observe that the map $(X,\Norm_X)\to (X,d_X)$ can be viewed as a Borel function from $\Banach$ to $\Met$. Recall that elements of $\Banach$ are norms on a countable infinite-dimensional $\Rat$-vector space denoted by $V$. By fixing a bijection $f: V\rightarrow \Nat$ we associate to each $\Norm\in\Banach$ a metric $d\in\Met$ such that for every $n,m\in\Nat$ we have $d(n,m)=\min\{1,\|f^{-1}(n)-f^{-1}(m)\|\}$. This is clearly Borel.
\end{proof}
\begin{remark}
Observe that the only geometric property of Banach spaces that we used in the proof, besides that Banach spaces are cones so that $\rho_{HL}$ and $\rho_N$ agree on them (see Remark \ref{remark:ConePropertyOfBanach}), was that Banach spaces are geodesic metric spaces; that is, metric spaces $M$ such that for each pair of points $(x,y)\in M^2$, there is an isometric embedding $\iota: [0,d(x,y)]\rightarrow M$ with $\iota(0)=x$ and $\iota(d(x,y))=y$. In fact, a weaker assumption is sufficient, that they are length spaces, i.e. between every two points $x,y$ there is a path of length $d(x,y)+\varepsilon$, where $\varepsilon>0$ is arbitrary. Therefore it follows from the proof of Theorem \ref{thm:ReductionHLdistToGH} that there is a reduction from $\rho_{HL}$ (or $\rho_N$) on cones that are length spaces to $\rho_{GH}$ on metric spaces.
\end{remark}
Finally, we present the proof of the reduction that involves the Kadets distance.
\begin{thm}\label{thm:KadetsToGH}
There is an injective Borel-uniformly continuous reduction from $\rho_K$ on $\Banach$ to $\rho_{GH}$ on $\Met$.

Moreover, the reduction is not only Borel-uniformly continuous, but also Borel-Lipschitz on small distances.
\end{thm}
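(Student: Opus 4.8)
\emph{The plan.} As in Theorems~\ref{thm:BMtoGH}, \ref{thm:ReductionLipToGH} and \ref{thm:ReductionHLdistToGH}, I would produce a Borel assignment $\nu\mapsto M_\nu\in\Met$ and show it is an injective reduction which is, moreover, Borel-Lipschitz on small distances. The space $M_\nu$ must carry enough of the linear structure of $X_\nu$ for a correspondence between $M_\nu$ and $M_\lambda$ of small additive distortion to be convertible into a $\Rat$-homogeneous correspondence between dense subspaces of $X_\nu$ and $X_\lambda$ of the kind furnished by Lemma~\ref{lem:KadetsDescription}, and conversely. Accordingly $M_\nu$ will combine two devices. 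First, a level structure exactly as in the proof of Theorem~\ref{thm:ReductionLipToGH}: for each integer $k\le 0$ one puts on level $k$ a copy of the ball $\{v\in V\setsep\nu(v)\le 2^{-k}\}$ (where $V$ is the canonical countable dense $\Rat$-linear subspace of $X_\nu$) with the truncated rescaled metric $\min\{1,2^{k}\nu(v-w)\}$, and one glues consecutive levels together and to a distinguished anchor point $\clubsuit$ by the same $|10k-10l|$-type offsets as in Theorem~\ref{thm:ReductionLipToGH}; this makes arbitrarily large norms visible with a \emph{uniform} additive distortion and realizes multiplication by $2$ as the passage to the next level up. Second, combinatorial gadgets as in Theorem~\ref{thm:BMtoGH}: for $a,b$ on a common level, a triangle of paths certifying that the point $a+b$ (sitting on the level above) is their sum, and, for $a\in V$ and $q\in\Rat$, a path of prescribed length certifying that $qa$ is the corresponding rational dilation. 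All gadget edges get lengths from a fixed finite set of mutually well-separated constants avoiding the range of the level metric; subdividing each such edge by a dense chain keeps $M_\nu$ \emph{perfect} (the bare level parts are already perfect, being balls of infinite-dimensional Banach spaces), so that in the harder estimate one may pass, via Remark~\ref{rem:GHPerfectSpaces}, to a genuine bijection of the completions.

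\emph{From $\rho_K$ to $\rho_{GH}$.} Assume $\rho_K(\nu,\lambda)<\varepsilon$ and apply Lemma~\ref{lem:KadetsDescription} with $E=F=V$ to obtain $\delta\in(0,\varepsilon)$ and a $\Rat$-homogeneous correspondence $\Corr\subseteq V\times V$ satisfying the norm-of-sums inequality with constant $\varepsilon-\delta$. Define a correspondence between $M_\nu$ and $M_\lambda$ by relating the level-$k$ copy of $v$ to the level-$k$ copy of $w$ whenever $v\Corr w$, relating $\clubsuit$ to $\clubsuit$, and relating each gadget of $M_\nu$ to a gadget of $M_\lambda$ over any choice of $\Corr$-related outer vertices. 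Cross-level and in-gadget distances are prescribed constants matching exactly; for the remaining pairs (two points on one level, or a gadget apex against a point on its level) the needed bound is an instance of the norm-of-sums inequality with at most three summands, each of norm at most the radius $2^{-k}$ of that level, so after multiplication by $2^{k}$ and truncation it is $O(\varepsilon)$ \emph{uniformly in $k$}. Fact~\ref{fact:GHbyCorrespondences} then gives $\rho_{GH}(M_\nu,M_\lambda)\le c_1\varepsilon$.

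\emph{From $\rho_{GH}$ to $\rho_K$.} Assume $\rho_{GH}(M_\nu,M_\lambda)<\eta$ for $\eta$ below a fixed threshold; by Remark~\ref{rem:GHPerfectSpaces} fix a bijection $\Phi$ of the completions with $|d_\nu(x,y)-d_\lambda(\Phi x,\Phi y)|<2\eta$ throughout. Arguing exactly as in Theorems~\ref{thm:ReductionLipToGH} and \ref{thm:BMtoGH} — the anchor $\clubsuit$ and the separation of the auxiliary constants force $\Phi$ to preserve $\clubsuit$, to map each level onto the corresponding level, each dilation gadget onto a dilation gadget with the same parameter, and each addition gadget onto an addition gadget — one extracts a bijection $S$ between $V$ with norm $\nu$ and $V$ with norm $\lambda$ which is genuinely $\Rat$-linear, and the level-$k$ distances give $|\nu(v-v')-\lambda(Sv-Sv')|\le c\,\eta\max\{2^{-k},\nu(v-v')\}$, hence (choosing for each $u$ the level where $\nu(u)\sim 2^{-k}$) the relative estimate $|\nu(u)-\lambda(Su)|\le c\,\eta\,\nu(u)$ for all $u\in V$. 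Here the genuine $\Rat$-linearity pays off: the $\Rat$-homogeneous correspondence $\Corr'=\{(x,y)\setsep\lambda(Sx-y)\le 2c\eta\max\{\nu(x),\lambda(y)\}\}$, slightly thickened so that the two one-sided norm-comparison clauses of Lemma~\ref{lem:KadetsDescription} also hold, satisfies the full $n$-fold inequality essentially for free, since for $\Corr'$-related pairs $(x_i,y_i)$ the quantity $\lambda(\sum_i y_i)$ lies within $\sum_i 2c\eta\max\{\nu(x_i),\lambda(y_i)\}$ of $\lambda(S\sum_i x_i)$, while $|\nu(\sum_i x_i)-\lambda(S\sum_i x_i)|\le c\eta\,\nu(\sum_i x_i)\le c\eta\sum_i\max\{\nu(x_i),\lambda(y_i)\}$. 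Lemma~\ref{lem:KadetsDescription} then yields $\rho_K(\nu,\lambda)\le c_2\eta$. Injectivity of $\nu\mapsto M_\nu$, and its Borelness — after fixing a bijection of the (fixed) underlying set of $M_\nu$ with $\Nat$ the metric depends continuously on $\nu\in\Rea^{V}$ — are checked as in the cited theorems.

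\emph{The main obstacle.} The two devices have to cohabit: the radii $2^{-k}$, the scalings $2^{k}$, the inter-level offsets and the gadget constants must be chosen so that they stay mutually well-separated on \emph{every} level simultaneously, so that a correspondence of small distortion remains rigid enough to recover $\clubsuit$, the level decomposition and the combinatorial type of every gadget (and hence the $\Rat$-linearity of $S$), and so that all resulting bounds are \emph{linear} in the distance — which is what upgrades the reduction to Borel-Lipschitz on small distances. Setting up the addition gadgets so that they interact correctly with the level bookkeeping (the sum of two level-$k$ points being realized consistently on level $k-1$ across the whole ladder) is the fussiest point.
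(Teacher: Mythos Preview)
Your plan diverges sharply from the paper's argument, and the divergence is exactly where the difficulty lies.

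\textbf{What the paper does.} The paper does not try to encode the $\Rat$-linear structure of $X$ at all. It takes a symmetric countable dense set $D_X=\{x_i\}\subseteq S_X$ and adjoins, for every nonempty finite $F\subseteq\Nat$, a cluster of auxiliary points $p_{F,k}$ ($k\in F$) whose mutual distance is $15+\|\sum_{k\in F}x_k\|/|F|$ and whose distance to $x_i$ is $10+\|x_i-x_k\|$. Thus the single datum that Lemma~\ref{lem:KadetsDescription} actually needs --- the norm of a finite sum of unit vectors --- is recorded directly as a distance, with the $1/|F|$ normalisation making the error uniform in $|F|$. The forward estimate comes from the auxiliary Lemma~\ref{lem:KadetsGivesBijections} (a bijection of the spheres with control on $\|\sum\delta_i x_i\|-\|\sum\delta_i y_{\pi(i)}\|$); the backward estimate rebuilds a $\Rat$-homogeneous correspondence on positive-rational multiples of the $x_i$'s from a GH-correspondence, first reading off $\|\sum_{k\in F}x_k\|$ from the $p_{F,k}$ distances and then symmetrising. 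No levels, no addition gadgets, no rescaling ladder.

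\textbf{Where your outline breaks.} The load-bearing step in your backward direction is the sentence ``one extracts a bijection $S$ between $V$ with norm $\nu$ and $V$ with norm $\lambda$ which is genuinely $\Rat$-linear''. In Theorem~\ref{thm:BMtoGH} this works because the ambient metric lies in $\Met_2^{15}$: a map of small multiplicative distortion is a bijection of the underlying discrete set, so gadgets go to gadgets vertex-to-vertex and $\Rat$-linearity is forced exactly. In your space $M_\nu$ the level balls are (completions of) balls of an infinite-dimensional Banach space, hence perfect; you even subdivide the gadget edges by dense chains. Remark~\ref{rem:GHPerfectSpaces} then only gives a bijection of the \emph{completions} with additive distortion $2\eta$. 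Such a map need not send the countable set of gadget anchor points of $M_\nu$ (your copy of $V$) onto the gadget anchor points of $M_\lambda$; at best it sends each anchor near some anchor, so the map you recover is only \emph{approximately} additive, say $\lambda(S(a+b)-S(a)-S(b))\le c\eta(\nu(a)+\nu(b))$. But your verification of the $n$-fold inequality in Lemma~\ref{lem:KadetsDescription} explicitly uses exact linearity via $\lambda(S\sum_i x_i)=\lambda(\sum_i Sx_i)$; with only approximate additivity the telescoping picks up an extra factor of $n$, and the required bound $\lesssim\eta\sum_i\max\{\nu(x_i),\lambda(y_i)\}$ is lost. This is not a cosmetic gap: it is precisely the reason the paper's encoding stores $\|\sum_{k\in F}x_k\|$ directly rather than trying to reconstruct it from an almost-linear map.

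There is also a smaller issue in your forward direction: the correspondence from Lemma~\ref{lem:KadetsDescription} only controls norms multiplicatively, so $v$ in the level-$k$ ball of $X_\nu$ need not correspond to a $w$ in the level-$k$ ball of $X_\lambda$; the boundaries of your balls do not match up, and the gadget apexes $a+b$ (which you identify with level points) inherit the same ambiguity. These can probably be patched, but the exact $\Rat$-linearity problem above cannot, and it is what your whole verification of Lemma~\ref{lem:KadetsDescription} rests on.
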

We first need the following lemma.
\begin{lemma}\label{lem:KadetsGivesBijections}
Let $X$ and $Y$ be two separable Banach spaces and fix countable dense subsets $(x_i)_i$ and $(y_i)_i$ of the spheres $S_X$ and $S_Y$ respectively. Then $\rho_K(X,Y)<\varepsilon$, for some $\varepsilon>0$, implies that there exists a bijection $\pi\in S_\infty$ such that for every finite $F\subseteq \Nat$ and every $(\delta_i)_{i\in F} \in\{-1,1\}^F$ we have
\[
\left| \Big\|\sum_{i\in F} \delta_i x_i\Big\|_X - \Big\|\sum_{i\in F} \delta_i y_{\pi(i)}\Big\|_Y\right|<2|F|\varepsilon.
\]
\end{lemma}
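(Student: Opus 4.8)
The plan is to argue directly from Definition~\ref{defin:Kadetsdistance}, without even invoking Lemma~\ref{lem:KadetsDescription}. We may assume $0<\varepsilon<1$: if $\varepsilon\ge 1$, then since $x_i\in S_X$ and $y_j\in S_Y$, for any nonempty finite $F$ both norms $\|\sum_{i\in F}\delta_i x_i\|_X$ and $\|\sum_{i\in F}\delta_i y_{\pi(i)}\|_Y$ are at most $|F|<2|F|\varepsilon$, so the identity bijection already works. Otherwise fix $\varepsilon_0$ with $\rho_K(X,Y)<\varepsilon_0<\varepsilon$, choose a Banach space $Z$ together with linear isometric embeddings of $X$ and $Y$ into $Z$ for which $\rho_H^Z(B_X,B_Y)<\varepsilon_0$, and identify $X$ and $Y$ with their images in $Z$. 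The target is a bijection $\pi\in S_\infty$ with $\|x_i-y_{\pi(i)}\|_Z<\varepsilon+\varepsilon_0$ for every $i$. This is enough, because the embeddings are linear and isometric, so for every nonempty finite $F\subseteq\Nat$ and $(\delta_i)_{i\in F}\in\{-1,1\}^F$,
\[
\Big| \Big\|\sum_{i\in F}\delta_i x_i\Big\|_X-\Big\|\sum_{i\in F}\delta_i y_{\pi(i)}\Big\|_Y\Big|
=\Big| \Big\|\sum_{i\in F}\delta_i x_i\Big\|_Z-\Big\|\sum_{i\in F}\delta_i y_{\pi(i)}\Big\|_Z\Big|
\le\sum_{i\in F}\|x_i-y_{\pi(i)}\|_Z<|F|\,(\varepsilon+\varepsilon_0)<2|F|\varepsilon .
\]

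The construction of $\pi$ rests on two symmetric matching observations: \emph{for every $i\in\Nat$ and every finite $J\subseteq\Nat$ there is $j\in\Nat\setminus J$ with $\|x_i-y_j\|_Z<\varepsilon+\varepsilon_0$; and, symmetrically, for every $j\in\Nat$ and every finite $I\subseteq\Nat$ there is $i\in\Nat\setminus I$ with $\|x_i-y_j\|_Z<\varepsilon+\varepsilon_0$.} To prove the first, note $x_i\in B_X$, so there is $y\in B_Y$ with $\|x_i-y\|_Z<\varepsilon_0$; then $\|y\|_Y=\|y\|_Z>1-\varepsilon_0>0$, hence $y':=y/\|y\|_Y\in S_Y$ and $\|y-y'\|_Z=1-\|y\|_Y<\varepsilon_0$, so $\|x_i-y'\|_Z<2\varepsilon_0$. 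Since $S_Y$ has no isolated points and $J$ is finite, the set $\{y_j:j\in\Nat\setminus J\}$ is still dense in $S_Y$, so we may pick $j\notin J$ with $\|y'-y_j\|_Y<\varepsilon-\varepsilon_0$, giving $\|x_i-y_j\|_Z<2\varepsilon_0+(\varepsilon-\varepsilon_0)=\varepsilon+\varepsilon_0$. The second observation follows by exchanging the roles of $X$ and $Y$.

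With these in hand, $\pi$ is obtained by a routine back-and-forth: we build an increasing chain of finite partial injections $\Nat\rightharpoonup\Nat$, at stage $2n-1$ using the first observation (with $J$ the current, finite range) to bring $n$ into the domain if it is not there yet, and at stage $2n$ using the second observation (with $I$ the current domain) to bring $n$ into the range. The union is a bijection of $\Nat$ satisfying $\|x_i-y_{\pi(i)}\|_Z<\varepsilon+\varepsilon_0$ for all $i$, and the displayed estimate completes the proof. I expect the only real subtlety to be the book-keeping ensuring that a fresh matching partner is always available at each stage, which is exactly what the two observations provide; their geometric content is that the unit sphere of an infinite-dimensional Banach space has no isolated points. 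It is worth noting that the factor $2$ in the statement (as opposed to $1$) is genuinely needed here: it comes from the normalization step $y\mapsto y/\|y\|_Y$, which may move the chosen approximant by as much as $\varepsilon_0$.
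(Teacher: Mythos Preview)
Your proof is correct and follows essentially the same approach as the paper: embed $X$ and $Y$ linearly isometrically into a common $Z$ witnessing the Kadets distance, normalize approximants from $B_Y$ to land on $S_Y$ (picking up a second $\varepsilon_0$), run a back-and-forth over the perfect spheres to avoid previously used indices, and finish with the triangle inequality. Your version is slightly more explicit in building the ``avoid a finite set'' clause directly into the matching observations and in introducing $\varepsilon_0<\varepsilon$ up front, but these are cosmetic differences rather than a genuinely different route.
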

\begin{proof}
We may suppose that $X$ and $Y$ are subspaces of a Banach space $Z$ and that we have $\rho_H^Z(B_X,B_Y)<\varepsilon$. First we claim that for every $x\in S_X$ there exists $y\in S_Y$ such that $\|x-y\|<2\varepsilon$. Analogously, for every $y\in S_Y$ there exists such $x\in S_X$. Indeed, by definition for every $x\in S_X$ there exists $y'\in B_Y$ with $\|x-y'\|<\varepsilon$. So we can take $y=y'/\|y'\|$ and we have $\|y-y'\|<\varepsilon$, so we are done by the triangle inequality. Now since $S_X$ and $S_Y$ are perfect metric spaces, by a back-and-forth argument (see e.g. the proof of Lemma \ref{lem:GHPerfectSpaces}), we get a bijection $\pi\in S_\infty$ such that for every $i\in\Nat$ we have $\|x_i-y_{\pi(i)}\|<2\varepsilon$. We claim that $\pi$ is as desired.

Take any finite subset $F\subseteq \Nat$ and $(\delta_i)_{i\in F}\in\{-1,1\}^F$. Then we have
\[\begin{split}
\left| \Big\|\sum_{i\in F} \delta_i x_i\Big\| - \Big\|\sum_{i\in F} \delta_i y_{\pi(i)}\Big\|\right| & \leq \Big\|\sum_{i\in F} \delta_i(x_i-y_{\pi(i)})\Big\|
 \leq \sum_{i\in F} \|x_i-y_{\pi(i)}\| <\\
 & <2|F|\varepsilon,
\end{split}\]
and we are done.
\end{proof}
\begin{proof}[Proof of Theorem \ref{thm:KadetsToGH}.]
The structure of the proof is the following. First, we describe a construction which to each separable Banach space $X$ assigns a metric space $M_X$. Next, we show that for every two separable Banach spaces $X$ and $Y$ we have $\rho_{GH}(M_X,M_Y) \leq 2\rho_{K}(X,Y)$ and
\[
	\rho_{GH}(X,Y) < 1 \implies \rho_{K}(X,Y) \leq 17\rho_{GH}(M_X,M_Y).
\]
Finally, we show it is possible to make such an assignment in a Borel way.

Let $X$ be a separable Banach space. Fix a countable dense subset $D_X=\{x_i\setsep i\in\Nat\}\subseteq S_X$ of the unit sphere of $X$ that is symmetric, that is, for every $x\in D_X$ we also have $-x\in D_X$. Set $M_X=D_X\cup\{p_{F,k}\setsep F\in [\Nat]^{<\omega}\setminus\{\emptyset\}, k\in F\}$. We define a metric $d_X$ on $M_X$ as follows: $$d_X(x_i,x_j)=\|x_i-x_j\|_X,$$ $$d_X(x_i,p_{F,k})=10+\|x_i-x_k\|_X,$$ $$d_X(p_{F,i},p_{F,j})=15+\frac{\|\sum_{k\in F}x_k\|_X}{|F|},\quad i\neq j\in F,$$ $$d_X(p_{F,i},p_{G,j})=20,\quad F\neq G.$$

Fix separable Banach spaces $X$ and $Y$. The space $M_Y=D_Y\cup\{q_{F,k}\setsep F\in [\Nat]^{<\omega}\setminus\{\emptyset\}, k\in F\}$, where $D_Y=\{y_i\setsep i\in\Nat\}\subseteq S_Y$ is symmetric countable dense, is constructed analogously as $M_X$.

We claim that for every $\varepsilon > 0$ with $\rho_K(X,Y) < \varepsilon$ we have $\rho_{GH}(M_X,M_Y) \leq 2\varepsilon$. Indeed, fix $\varepsilon > 0$ with $\rho_K(X,Y) < \varepsilon$ and use Lemma \ref{lem:KadetsGivesBijections} applied to countable dense sequences $(x_i)_i$ and $(y_i)_i$ of the spheres $S_X$ and $S_Y$ respectively. The bijection $\pi$ from Lemma~\ref{lem:KadetsGivesBijections} induces a bijection $\phi: M_X\rightarrow M_Y$ defined as follows: $$\phi(x_i)=y_{\pi(i)},$$ $$\phi(p_{F,j})=q_{\pi[F],\pi(j)}.$$
We claim that for every $x,y\in M_X$ we have $|d_X(x,y)-d_Y(\phi(x),\phi(y))|<4\varepsilon$, i.e. $M_X\simeq_{4\varepsilon} M_Y$. We consider several cases:

\medskip
\noindent{\bf Case 1.} $(x,y)=(x_i,x_j)$ for some $i,j\in\Nat$: then we have $$|d_X(x_i,x_j)-d_Y(y_{\pi(i)},y_{\pi(j)})|=| \|x_i-x_j\|_X-\|y_{\pi(i)}-y_{\pi(j)}\|_Y |<4\varepsilon.$$
\noindent{\bf Case 2.} $x=x_i$, $y=p_{F,k}$ for some $i\in\Nat$, $F\subseteq \Nat$, $k\in F$: then we have $$|d_X(x_i,p_{F,k})-d_Y(y_{\pi(i)},q_{\pi[F],\pi(k)})|=|\|x_i-x_k\|_X-\|y_{\pi(i)}-y_{\pi(k)}\|_Y|<4\varepsilon.$$
\noindent{\bf Case 3.} $x=p_{F,j}$, $y=p_{F,k}$ for some $F\subseteq \Nat$, $j\neq k\in F$: then we have 
\[\begin{split}
|d_X(p_{F,j},p_{F,k})-d_Y(q_{\pi[F],\pi(j)},q_{\pi[F],\pi(k)})| & = \frac{| \|\sum_{i\in F}x_i\|_X-\|\sum_{i\in F} y_{\pi(i)}\|_Y|}{|F|}\\
& < \frac{2|F|\varepsilon}{|F|}=2\varepsilon.
\end{split}\]
\noindent{\bf Case 4.} $x=p_{F,i}$, $y=p_{G,j}$, for $F\neq G\subseteq \Nat$, $i\in F$, $j\in G$: then we have $$|d_X(p_{F,i},p_{G,j})-d_Y(q_{\pi[F],\pi(i)},q_{\pi[G],\pi(j)})|=20-20=0.$$
Hence, $d_X\simeq_{4\varepsilon} d_Y$ and, by Lemma \ref{lem:lehciImplikace}, we get $\rho_{GH}(M_X,M_Y)\leq 2\varepsilon$ which proves the claim.

\medskip
Conversely, suppose now that $\rho_{GH}(M_X,M_Y)<\varepsilon$, where $\varepsilon\in(0,1)$. By Fact \ref{fact:GHbyCorrespondences} there exists a correspondence $\Corr\subseteq M_X\times M_Y$ such that for every $x,y\in M_X$ and $x',y'\in M_Y$, if $x\Corr x'$ and $y\Corr y'$, then $|d_X(x,y)-d_Y(x',y')|<2\varepsilon$. Pick some $i\neq j\in\Nat$, a finite subset $F\subseteq \Nat$, $k\neq k'\in F$. Set $u_1=x_i$, $u_2=x_j$, $u_3=p_{F,k}$, $u_4=p_{F,k'}$. We find elements $v_1,\ldots,v_4\in M_Y$ such that $u_i\Corr v_i$ for $i\leq 4$. We get the following observations:
\begin{itemize}
\item Since $d_X(u_1,u_2)\in [0,2]$ we get that $d_Y(v_1,v_2)\in [0,4]$, so we deduce that for every $n\in\Nat$ and every $y\in M_Y$ such that $x_n\Corr y$ we have $y=y_m$ for some $m\in\Nat$. Conversely, for every $n\in\Nat$ and every $x\in M_X$ such that $x\Corr y_n$ we have $x=x_m$ for some $m\in\Nat$.
\item Since $d_X(u_3,u_4)\in [15,16]$ we get that $d_Y(v_3,v_4)\in [13,18]$. So we deduce that for every finite subsets $G,G'\subseteq \Nat$ and $l\in G$, $l'\in G'$, and every $y,y'\in M_Y$ such that $p_{G,l}\Corr y$ and $p_{G',l'}\Corr y'$ there are finite subsets $H,H'\subseteq \Nat$ and $h\in H$, $h'\in H'$ such that $y=p_{H,h}$, $y'=p_{H',h'}$ and $G=G'$ if and only if $H=H'$.
\end{itemize}
To summarize, $\Corr$ induces a bijection $\phi$ between $M_X\setminus (x_i)_i$ and $M_Y\setminus (y_i)_i$. Moreover, for every finite $F\subseteq \Nat$ there is a unique finite set, which we shall denote by $\varphi(F)$, such that $\phi$ is a bijection between $\{p_{F,i}\setsep i\in F\}$ and $\{q_{\varphi(F),j}\setsep j\in \varphi(F)\}$. For every $i\in F$, by $\varphi_F(i)$ we shall the denote the element $i'\in \varphi(F)$ such that $q_{\varphi(F),i'}=\phi(p_{F,i})$.

On the other hand, $\Corr$, when restricted on $D_X\times D_Y$, is a correspondence between $D_X$ and $D_Y$ witnessing that $\rho_{GH}(D_X,D_Y)\leq\varepsilon$. Since $d_X\upharpoonright D_X$ and $d_Y\upharpoonright D_Y$ are perfect metric spaces, by a back-and-forth argument (see e.g. the proof of Lemma \ref{lem:GHPerfectSpaces}) we construct a bijection $\phi'\subseteq \Corr$ between $D_X$ and $D_Y$ such that $|d_X(x_i,x_j)-d_Y(\phi'(x_i),\phi'(x_j))|<2\varepsilon$. Taking the union of the bijections $\phi$ and $\phi'$ we get a bijection, which we shall still denote by $\phi$, between $M_X$ and $M_Y$ such that for every $x,y\in M_X$, $|d_X(x,y)-d_Y(\phi(x),\phi(y))|<2\varepsilon$.

Pick now an arbitrary finite $F\subseteq \Nat$. We want to estimate the expression
\[
\left| \Big\|\sum_{k\in F} x_k\Big\|_X - \Big\|\sum_{k\in F} \phi(x_k)\Big\|_Y\right|.
\]
We may suppose $F$ contains at least two elements. Take any $i\neq i'\in F$ and set $G=\varphi(F)$ and $j=\varphi_F(i)$, $j'=\varphi_F(i')$. Since we have $d_X(p_{F,i},p_{F,i'})=15+\|\sum_{k\in F} x_k\|_X/|F|$ and $d_Y(q_{G,j},q_{G,j'})=15+\|\sum_{k\in G} y_k\|_Y/|G|$, and moreover $| d_X(p_{F,i},p_{F,i'})-d_Y(q_{G,j},q_{G,j'})|<2\varepsilon$ we get
\[
\left| \Big\|\sum_{k\in F} x_k\Big\|_X - \Big\|\sum_{k\in G} y_k\Big\|_Y\right|<2\varepsilon|F|.
\]
So we try to estimate
\[
\left|\Big\|\sum_{k\in G} y_k\Big\|_Y - \Big\|\sum_{k\in F} \phi(x_k)\Big\|_Y\right|.
\]
Pick any $k\in F$ and let $k'=\varphi_F(k)\in G$. We have $d_X(x_k,p_{F,k})=10$ and $d_Y(\phi(x_k),q_{G,k'})=10+\|\phi(x_k)-y_{k'}\|_Y$. Therefore, since
\[
\left| d_X(x_k,p_{F,k})-d_Y(\phi(x_k),q_{G,k'})\right|= \left|d_X(x_k,p_{F,k})-d_Y(\phi(x_k),\phi(p_{F,k})\right|<2\varepsilon,
\]
we get that $\|\phi(x_k)-y_{k'}\|_Y<2\varepsilon$. This implies that 
\[
\left|\Big\|\sum_{k\in G} y_k\Big\|_Y - \Big\|\sum_{k\in F} \phi(x_k)\Big\|_Y\right|<2\varepsilon|F|,
\]
which in turn implies that
\[
\left| \Big\|\sum_{k\in F} x_k\Big\|_X - \Big\|\sum_{k\in F} \phi(x_k)\Big\|_Y\right|<4\varepsilon|F|.
\]
Note that the last inequality in particular implies that $\phi$ is almost symmetric in the following sense: Pick any $x\in D_X$. Since also $-x\in D_X$, the previous inequality implies
\begin{equation}\label{ineq:AlmostSymmetry}
\Big|\|x - x\|_X-\|\phi(x)+\phi(-x)\|_Y\Big|=\|-\phi(x)-\phi(-x)\|_Y<8\varepsilon.
\end{equation}
We set $E=\{e\in X\setsep e=qx, q\in\Rat^+\cup\{0\},x\in D_X\}$ and $F=\{f\in Y\setsep f=qy, q\in\Rat^+\cup\{0\},y\in D_Y\}$. Clearly $ E $ and $F $ are $\Rat$-homogeneous dense subsets of $X$ and $Y$ respectively. We define a correspondence $\Corr_0\subseteq E\times F$, in fact a bijection, such that $e\Corr_0 f$ if and only if there are $x\in D_X$ and $q\in\Rat^+$ such that $e=qx$, $f=q\phi(x)$. So for every pair $(e,f)$ such that $e\Corr_0 f$ we have $\|e\|_X=\|f\|_Y$. We now claim that $\Corr_0$ is such that 
\begin{equation}\label{ineq:KadetsCorr}
\Bigg| \Big\|\sum_{i\leq n} u_i\Big\|_X-\Big\|\sum_{i\leq n} u'_i\Big\|_Y\Bigg|\leq 8\varepsilon\Big(\sum_{i\leq n} \|u_i\|_X\Big)
\end{equation} for all $(u_i)_i\subseteq E$ and $(u'_i)_i\subseteq F$, where for all $i\leq n$ we have $u_i\Corr_0 u'_i$.

Fix such  a sequence $(u_i)_{i\leq n}\subseteq E$. The corresponding sequence $(u'_i)_i$ is then determined uniquely. First we claim that without loss of generality we may suppose that $(u_i)_{i\leq n}\subseteq D_X$. Indeed, by the homogeneity of the inequality above, we may assume that each $u_i$ is a positive integer multiple of some $x\in D_X$. Since we allow repetitions in the sequence $(u_i)_{i\leq n}$, each element of the form $kx$, where $k\in\Nat$ and $x\in D_X$ can be replaced by $k$-many repetitions of the element $x$.

Next we show how we may approximate the sequence $(u_i)_i$, in which we allow repetitions, by a sequence $(a_i)_i\subseteq D_X$ in which we do not allow repetitions. For each $i\leq n$, choose some $a_i\in D_X$ such that $\|a_i-u_i\|_X<\varepsilon$. Let $(a'_i)_i \subseteq D_Y$ be the elements such that for all $i\leq n$ we have $a_i \Corr_0 a'_i$. Since by the assumption we have $\left| \|u_i-a_i\|_X-\|u'_i-a'_i\|_Y\right|<2\varepsilon$, we get $\|u'_i-a'_i\|_Y<3\varepsilon$. Notice that for such sequences we get, by the computations above, \[
\Bigg| \Big\|\sum_{i\leq n} a_i\Big\|_X-\Big\|\sum_{i\leq n} a'_i\Big\|_Y\Bigg|\leq 4\varepsilon n.
\]
This, together with the inequalities $\|u_i-a_i\|_X<\varepsilon$ and $\|u'_i-a'_i\|_Y<3\varepsilon$ implies that 
\[
\Bigg| \Big\|\sum_{i\leq n} u_i\Big\|_X-\Big\|\sum_{i\leq n} u'_i\Big\|_Y\Bigg|\leq 8\varepsilon n,
\]
which proves the inequality \eqref{ineq:KadetsCorr}.

Before we are in the position to apply Lemma \ref{lem:KadetsDescription} we need to guarantee that $\Corr_0$ is $\Rat$-homogeneous. Note that so far it is only closed under multiplication by positive rationals. This will be fixed in the last step.

Set $\bar \Corr=\Corr_0\cup -\Corr_0$, where $-\Corr_0=\{(x,y)\setsep (-x)\Corr_0 (-y)\}$. Now $\bar \Corr$ is clearly $\Rat$-homogeneous. Pick now an arbitrary sequence $(u_i)_{i\leq n}\subseteq E$ and a sequence $(u'_i)_i\subseteq F$  such that for all $i\leq n$ we have $u_i\bar \Corr u'_i$. For each $i\leq n$, pick $u''_i\in F$ such that $u_i\Corr_0 u''_i$. Either $u''_i=u'_i$, or by \eqref{ineq:AlmostSymmetry} we get $\|u''_i-u'_i\|_Y\leq 8\varepsilon\|u_i\|_X$. From these inequalities and from \eqref{ineq:KadetsCorr}, which gives us 
\[
\Bigg| \Big\|\sum_{i\leq n} u_i\Big\|_X-\Big\|\sum_{i\leq n} u''_i\Big\|_Y\Bigg|\leq 8\varepsilon\Big(\sum_{i\leq n} \|u_i\|_X\Big),
\]
we get the estimate
\[
\Bigg| \Big\|\sum_{i\leq n} u_i\Big\|_X-\Big\|\sum_{i\leq n} u'_i\Big\|_Y\Bigg|\leq 16\varepsilon\Big(\sum_{i\leq n} \|u_i\|_X\Big).
\]
The application of Lemma \ref{lem:KadetsDescription} then gives us that $\rho_K(X,Y)<17\varepsilon$.

\medskip
It remains to see that it is possible to find an injective and Borel map $f: \Banach\rightarrow\Met$ such that $f(X)$ is isometric to $(M_X,d_X)$ for every $X\in \Banach$. Each Banach space $X$ is coded as a norm $\Norm_X\in\Banach$ which is defined on a countable infinite-dimensional vector space over $\Rat$ denoted by $V$. First we need to select in a Borel way a countable dense symmetric subset of $S_X$. Pick $D\subseteq V\setminus\{0\}$ such that $D$ contains exactly one element of $\{tv\setsep t>0\}$ for every $v\in V\setminus\{0\}$. Fix some bijection $g:D\rightarrow \Nat$ and define a metric $d'_X$ on $\Nat$ as follows: for $n,m\in \Nat$ we set
\[
d'_X(n,m)=\left\|\frac{g^{-1}(n)}{\|g^{-1}(n)\|_X}-\frac{g^{-1}(m)}{\|g^{-1}(m)\|_X} \right\|_X.
\]
This corresponds to selecting a countable dense symmetric subset of $S_X$ with the metric inherited from $\Norm_X$. Clearly, the assignment $\Norm_X\to d'_X$ is injective and Borel. Then we only add to $\Nat$ a fixed countable set $\{p_{F,k}\setsep F\in [\Nat]^{<\omega}\setminus\{\emptyset\}, k\in F\}$ and define the metric $d_X$ on the union of these two countable sets using the norm $\Norm_X$. Finally, we reenumerate this countable set so that $d_X$ is defined on $\Nat$, and so belongs to $\Met$. That is clearly one-to-one and Borel.
\end{proof}

\section{Reductions from pseudometrics on \texorpdfstring{$\Met$}{M} to pseudometrics on \texorpdfstring{$\Banach$}{B}}\label{sectionReduction3}
This section is devoted to the proof of the following result.

\begin{thm} \label{thmGHtoK}
There is an injective Borel-uniformly continuous reduction from $ \rho_{GH} $ on $ \Met_{1/2}^{1} $ to each of the distances $ \rho_{K}, \rho_{BM}, \rho_{L}, \rho_{U}, \rho_{N}, \rho_{GH}^{\Banach} $ on $ \Banach $.

Moreover, for the distances $ \rho_{K} $ and $ \rho_{BM} $, the reduction is not only Borel-uniformly continuous, but also Borel-Lipschitz.
\end{thm}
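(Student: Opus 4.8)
My plan is to build a single Borel injection $f\colon\Met_{1/2}^{1}\to\Banach$, written $d\mapsto X_{d}$, and to deduce all six reductions from just three implications, with modulus functions $\phi_{i}(\varepsilon)\to 0$ as $\varepsilon\to 0$:
(U) $\rho_{GH}(M_{d},M_{e})<\varepsilon\Rightarrow\rho_{BM}(X_{d},X_{e})<\phi_{1}(\varepsilon)$;
(L$_{\mathrm{lin}}$) $\rho_{GH}^{\Banach}(X_{d},X_{e})<\varepsilon\Rightarrow\rho_{GH}(M_{d},M_{e})<\phi_{2}(\varepsilon)$;
(L$_{\mathrm{coarse}}$) $\rho_{N}(X_{d},X_{e})<\varepsilon\Rightarrow\rho_{GH}(M_{d},M_{e})<\phi_{3}(\varepsilon)$.
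This suffices because of the elementary comparisons, valid up to universal multiplicative constants and obtained from the definitions, Lemma~\ref{lem:KadetsDescription}, Lemma~\ref{lem:BanachMazurVerifiableOnCountableSubspaces} and Proposition~\ref{prop:HLandNetDistanceAreEqual}: $\rho_{GH}^{\Banach}\le\rho_{K}\le\rho_{BM}$, and $\rho_{N}=\rho_{HL}\le\rho_{U}\le\rho_{L}^{\Banach}\le\rho_{BM}$. Indeed, (U) together with ``$\rho_{?}\le C\rho_{BM}$'' yields the forward half of the reduction for every $?\in\{\rho_{K},\rho_{BM},\rho_{L},\rho_{U},\rho_{N},\rho_{GH}^{\Banach}\}$; and (L$_{\mathrm{lin}}$), resp.\ (L$_{\mathrm{coarse}}$), together with ``$\rho_{GH}^{\Banach}\le C\rho_{?}$'' for $?$ in the linear cluster $\{\rho_{K},\rho_{BM},\rho_{GH}^{\Banach}\}$, resp.\ ``$\rho_{N}\le C\rho_{?}$'' for $?$ in the coarse cluster $\{\rho_{L},\rho_{U},\rho_{N}\}$, yields the backward half. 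Finally, since $\rho_{GH}$ is bounded on $\Met_{1/2}^{1}$ and $\rho_{BM},\rho_{K}$ will be bounded along $f$, if (U) and (L$_{\mathrm{lin}}$) can be taken with $\phi_{1},\phi_{2}$ linear near $0$ then the reduction to $\rho_{K}$ and $\rho_{BM}$ automatically upgrades from Borel-Lipschitz on small distances to full Borel-Lipschitz.

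\textbf{The construction.} The essential requirement is that $M_{d}$ be recoverable from $X_{d}$ at three different resolutions simultaneously: from its linear-isomorphism type (for $\rho_{BM},\rho_{K}$), from the metric of its unit ball (for $\rho_{GH}^{\Banach}$), and from its large-scale geometry (for $\rho_{N}$). I would take $X_{d}$ to be a Lipschitz-free-space--type space $\mathcal{F}(\widehat{M}_{d})$ over a dilated, unbounded copy $\widehat{M}_{d}$ of $M_{d}$: first spread the bounded $\tfrac12$-uniformly-discrete space $M_{d}$ across all scales by a construction in the spirit of the metric $\tilde d$ used in Theorems~\ref{thm:ReductionLipToGH} and~\ref{thm:ReductionHLToGH} (so that the large-scale picture of $\widehat{M}_{d}$ reproduces $M_{d}$ at unit scale, with an anchoring point $\clubsuit$), then pass to the free space, which carries a canonical system of ``molecules'' indexed by $M_{d}$ inside its unit ball. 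The reason for the free space is functoriality: a bi-Lipschitz bijection $M_{d}\to M_{e}$ of small distortion induces one $\widehat{M}_{d}\to\widehat{M}_{e}$, hence a linear isomorphism $X_{d}\to X_{e}$ with small $\log\|\cdot\|\,\|\cdot^{-1}\|$; combined with Theorem~\ref{thm:ReductionGHboundedToLip} (which turns small $\rho_{GH}$ on $\Met_{1/2}^{1}$ into small $\rho_{L}$, Borel-Lipschitzly on small distances), this gives (U).

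\textbf{The backward estimates, and Borel-ness.} For (L$_{\mathrm{lin}}$) I would argue that the molecule system sits recognizably in $B_{X_{d}}$, so a near-isometry of the unit balls restricts/almost-projects to an almost-isometric correspondence between the two copies of $M_{d}$, whence $\rho_{GH}(M_{d},M_{e})$ is small by Fact~\ref{fact:GHbyCorrespondences}. For (L$_{\mathrm{coarse}}$) I would show $\widehat{M}_{d}$ embeds isometrically and as a net (coarsely densely) into $X_{d}$, so a quasi-isometry of $X_{d}$ and $X_{e}$ yields one of $\widehat{M}_{d}$ and $\widehat{M}_{e}$; reading off $M_{d},M_{e}$ from the large-scale skeletons exactly as in the proofs of Theorems~\ref{thm:ReductionLipToGH}/\ref{thm:ReductionHLToGH} then gives small $\rho_{GH}(M_{d},M_{e})$. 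Borel-ness and injectivity of $f$ are routine, exactly as in the proofs of Theorems~\ref{thm:BMtoGH}, \ref{thm:KadetsToGH} and~\ref{thm:ReductionHLdistToGH}: the norm coding $X_{d}$ depends continuously on $d\in\mathbb{R}^{\Nat\times\Nat}$, and distinct metrics produce distinct codes.

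\textbf{Where the difficulty lies.} The main obstacle is (L$_{\mathrm{coarse}}$): Banach norms are homogeneous and cannot ``truncate'' the way the metrics $\tilde d$ of Theorems~\ref{thm:ReductionLipToGH}/\ref{thm:ReductionHLToGH} do, so the multi-scale dilation that lets the \emph{coarse} geometry of $X_{d}$ retain the \emph{unit-scale} metric $M_{d}$ must be engineered inside the free-space construction rather than inside a single norm, and one must control carefully how quasi-isometries of free spaces interact with the distinguished dilated skeleton. A secondary difficulty is establishing (L$_{\mathrm{lin}}$) for the genuinely weakest distance $\rho_{GH}^{\Banach}$ --- i.e.\ showing that a near-isometry of unit \emph{balls} (not of the ambient Banach spaces) already detects the molecule system --- and keeping $\phi_{1},\phi_{2}$ linear near $0$ so that the promised Borel-Lipschitz reduction to $\rho_{K}$ and $\rho_{BM}$ goes through.
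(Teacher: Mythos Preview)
Your high-level strategy---one map plus a web of comparison inequalities---is sound and is in fact what the paper does. But the concrete construction you propose, via Lipschitz-free spaces, has a genuine gap that I do not see how to close.

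The fatal point is (L$_{\mathrm{coarse}}$). You assert that $\widehat{M}_{d}$ embeds ``as a net (coarsely densely)'' into $X_{d}=\mathcal{F}(\widehat{M}_{d})$, and that therefore a quasi-isometry of the free spaces restricts to one of the embedded skeletons. This is false: the image of the canonical embedding $\delta\colon \widehat{M}_{d}\hookrightarrow \mathcal{F}(\widehat{M}_{d})$ is never coarsely dense in the (infinite-dimensional) free space; it is a thin subset, and a quasi-isometry $\mathcal{F}(\widehat{M}_{d})\to\mathcal{F}(\widehat{M}_{e})$ has no reason to send it anywhere near $\delta(\widehat{M}_{e})$. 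So the mechanism by which you would read off $M_{d}$ from the large-scale geometry of $X_{d}$ is simply absent. The estimate (L$_{\mathrm{lin}}$) is almost as problematic: recognising the molecule system from a Gromov--Hausdorff approximation of $B_{\mathcal{F}(\widehat{M}_{d})}$ alone is a quantitative rigidity statement for free spaces that is not available (qualitatively, even deciding whether $\mathcal{F}(M)\cong\mathcal{F}(N)$ forces $M$ and $N$ to be close is open in general).

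The paper avoids all of this with a much more explicit device: a renorming of $\ell_{2}$,
\[
\|x\|_{f}=\sup\Big(\{\|x\|_{\ell_{2}}\}\cup\Big\{\tfrac{1}{\sqrt{2}}\big(\alpha+\delta f(n,m)\big)\,|x_{n}+x_{m}|:\ n\neq m\Big\}\Big),
\]
so that $\|e_{n,m}\|_{f}=\alpha+\delta f(n,m)$ encodes $f$ directly. A hands-on geometric argument (Lemma~\ref{lemmGHtoK}) shows that a small Kadets distance forces a permutation $\pi$ with $|f(n,m)-g(\pi(n),\pi(m))|$ small, yielding $\rho_{GH}\le C\rho_{K}$ with a linear modulus. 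The six distances are then linked by a \emph{single} cycle
\[
\rho_{GH}\to\rho_{BM}\to\rho_{L}\to\rho_{U}\to\rho_{N}\to\rho_{GH}^{\Banach}\to\rho_{K}\to\rho_{GH},
\]
where the step $\rho_{GH}^{\Banach}\to\rho_{K}$ is not elementary: it invokes the Kalton--Ostrovskii theory (the constant $\kappa_{0}$ and \cite[Theorems~3.6--3.7]{KalOst}), which applies precisely because all the spaces in question are uniformly close to $\ell_{2}$. This cyclic organisation means no separate ``coarse backward'' estimate is needed at all---the coarse cluster feeds into $\rho_{GH}^{\Banach}$, which feeds into $\rho_{K}$, and the hard work is done once, for $\rho_{K}$.
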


The definition of our reduction is based on a simple geometric idea of renorming of the Hilbert space $ \ell_{2} $. However, the proof that the idea works is technical and splits into many steps.

Let us denote by $ e_{n} $ the sequence in $ \ell_{2} $ that has $ 1 $ at the $ n $-th place and $ 0 $ elsewhere. Let us moreover denote
$$ e_{n, m} = \frac{1}{\sqrt{2}}(e_{n}+e_{m}), \quad \{ n, m \} \in [\mathbb{N}]^{2}. $$
The following fact can be verified by a simple computation.

\begin{lemma} \label{lemmsep}
For $ \{ n, m \}, \{ n', m' \} \in [\mathbb{N}]^{2} $, we have
\begin{align*}
\Vert e_{n,m} - e_{n',m'} \Vert_{\ell_{2}} & = 1 & & \textrm{if } |\{ n, m \} \cap \{ n', m' \}| = 1, \\
\Vert e_{n,m} + e_{n',m'} \Vert_{\ell_{2}} & = \sqrt{3} & & \textrm{if } |\{ n, m \} \cap \{ n', m' \}| = 1, \\
\Vert e_{n,m} \pm e_{n',m'} \Vert_{\ell_{2}} & = \sqrt{2} & & \textrm{if } \{ n, m \} , \{ n', m' \} \textrm{ are disjoint}.
\end{align*}
Hence, the set of all vectors $ \pm e_{n,m} $ is $ 1 $-separated.
\end{lemma}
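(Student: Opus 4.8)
The plan is to reduce everything to elementary computations with the orthonormal system $(e_{n})_{n\in\mathbb{N}}$, splitting according to the two possible configurations of the index pairs. Throughout I will use that $\|\sum_{i} c_{i} e_{n_{i}}\|_{\ell_{2}}^{2} = \sum_{i} c_{i}^{2}$ whenever the $n_{i}$ are pairwise distinct.

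First I would handle the case $|\{n,m\}\cap\{n',m'\}|=1$. Writing $k$ for the common index, $j$ for the other element of $\{n,m\}$ and $j'$ for the other element of $\{n',m'\}$, the indices $k,j,j'$ are pairwise distinct (if $j=j'$ the two pairs would coincide). Then $e_{n,m}-e_{n',m'}=\tfrac{1}{\sqrt2}(e_{j}-e_{j'})$ has squared norm $\tfrac12(1+1)=1$, while $e_{n,m}+e_{n',m'}=\tfrac{1}{\sqrt2}(2e_{k}+e_{j}+e_{j'})$ has squared norm $\tfrac12(4+1+1)=3$; this gives the first two formulas. In the disjoint case $n,m,n',m'$ are four pairwise distinct indices, so $e_{n,m}\pm e_{n',m'}=\tfrac{1}{\sqrt2}(e_{n}+e_{m}\pm e_{n'}\pm e_{m'})$ has squared norm $\tfrac12\cdot4=2$ independently of the sign, which is the third formula.

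For the $1$-separatedness of $\{\pm e_{n,m}\}$ I would go through the possibilities for two distinct vectors $v=\sigma e_{n,m}$ and $w=\tau e_{n',m'}$ with $\sigma,\tau\in\{-1,1\}$. If $\sigma=\tau$, then necessarily $\{n,m\}\neq\{n',m'\}$, so by the first and third formulas $\|v-w\|_{\ell_{2}}$ equals $1$ or $\sqrt2$. If $\sigma=-\tau$, then $\|v-w\|_{\ell_{2}}=\|e_{n,m}+e_{n',m'}\|_{\ell_{2}}$, which equals $2$ if $\{n,m\}=\{n',m'\}$ and equals $\sqrt3$ or $\sqrt2$ otherwise by the second and third formulas. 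In each case $\|v-w\|_{\ell_{2}}\geq 1$.

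I do not expect any genuine obstacle; the only points requiring a little care are the index bookkeeping in the intersecting case (in particular that $j\neq j'$, which is exactly what makes $\|e_{n,m}-e_{n',m'}\|_{\ell_{2}}$ equal to $1$ rather than $0$) and remembering to include the subcase of equal pairs with opposite signs when checking separation.
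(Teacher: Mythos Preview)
Your proposal is correct and is exactly the kind of direct computation the paper has in mind; the paper itself does not spell out a proof but merely states that the lemma ``can be verified by a simple computation.'' Your case analysis and the check of $1$-separatedness (including the opposite-sign, equal-pair subcase) are complete and accurate.
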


Let us fix numbers $ \alpha $ and $ \delta $ such that
$$ 1 < \alpha < \alpha + \delta \leq \frac{200}{199}. $$
For every $ f : [\mathbb{N}]^{2} \to [0, 1] $, we define an equivalent norm $ \Vert \cdot \Vert_{f} $ on $ \ell_{2} $ by
$$ \Vert x \Vert_{f} = \sup \Big( \{ \Vert x \Vert_{\ell_{2}} \} \cup \Big\{ \frac{1}{\sqrt{2}} \cdot \big( \alpha + \delta \cdot f(n, m) \big) \cdot |x_{n}+x_{m}| : n \neq m \Big\} \Big) $$
for $ x = (x_{n})_{n=1}^{\infty} \in \ell_{2} $. This is an equivalent norm indeed, as $ \frac{1}{\sqrt{2}} \cdot |x_{n}+x_{m}| = | \langle x, e_{n,m} \rangle | \leq \Vert x \Vert_{\ell_{2}} $, and consequently
$$ \Vert x \Vert_{\ell_{2}} \leq \Vert x \Vert_{f} \leq \frac{200}{199} \Vert x \Vert_{\ell_{2}}, \quad x \in \ell_{2}. $$
Let us define
$$ P_{n,m} = \Big\{ x \in \ell_{2} : \Vert x \Vert_{\ell_{2}} \leq \frac{1}{\sqrt{2}} \cdot \big( \alpha + \delta \cdot f(n, m) \big) \cdot (x_{n}+x_{m}) \Big\}. $$
It follows from the following lemma that any non-zero $ x \in \ell_{2} $ belongs to at most one set $ \pm P_{n,m} $.

\begin{lemma} \label{lemmPnm}
Let us denote $ h = \alpha + \delta \cdot f(n, m) $. If $ \Vert x \Vert_{\ell_{2}} = 1 $, then
$$ x \in P_{n,m} \quad \Leftrightarrow \quad \Vert x - e_{n,m} \Vert_{\ell_{2}} \leq \sqrt{\frac{2(h-1)}{h}}. $$
In particular,
$$ x \in P_{n,m} \quad \Rightarrow \quad \Vert x - e_{n,m} \Vert_{\ell_{2}} \leq \frac{1}{10}. $$
\end{lemma}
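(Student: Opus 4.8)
The plan is to translate membership in $P_{n,m}$ into a statement about the Hilbert inner product $\langle x, e_{n,m}\rangle$, and then to recognize that quantity as being governed by the squared distance $\Vert x - e_{n,m}\Vert_{\ell_{2}}^{2}$. Writing $h = \alpha + \delta \cdot f(n,m)$, the first step is the elementary observation that $\tfrac{1}{\sqrt{2}}(x_{n}+x_{m}) = \langle x, e_{n,m}\rangle$, so that for a unit vector $x$ the defining inequality of $P_{n,m}$, namely $\Vert x\Vert_{\ell_{2}} \leq \tfrac{1}{\sqrt{2}}\, h \,(x_{n}+x_{m})$, is equivalent to $\langle x, e_{n,m}\rangle \geq 1/h$.

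The second step uses $\Vert e_{n,m}\Vert_{\ell_{2}} = 1$ together with the polarization identity to compute
$$ \Vert x - e_{n,m}\Vert_{\ell_{2}}^{2} = \Vert x\Vert_{\ell_{2}}^{2} - 2\langle x, e_{n,m}\rangle + \Vert e_{n,m}\Vert_{\ell_{2}}^{2} = 2 - 2\langle x, e_{n,m}\rangle, $$
hence $\langle x, e_{n,m}\rangle = 1 - \tfrac{1}{2}\Vert x - e_{n,m}\Vert_{\ell_{2}}^{2}$. Substituting this into $\langle x, e_{n,m}\rangle \geq 1/h$ and rearranging yields exactly $\Vert x - e_{n,m}\Vert_{\ell_{2}}^{2} \leq 2(h-1)/h$, which is the claimed equivalence.

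For the ``in particular'' clause, since $0 \leq f(n,m) \leq 1$ we have $h \in [\alpha, \alpha+\delta] \subseteq \bigl(1, \tfrac{200}{199}\bigr]$; and the inequality $\sqrt{2(h-1)/h} \leq \tfrac{1}{10}$ is equivalent to $200(h-1) \leq h$, i.e.\ $199 h \leq 200$, i.e.\ $h \leq \tfrac{200}{199}$, which holds by the choice of $\alpha$ and $\delta$. There is no real obstacle here; the only points requiring mild care are keeping track of the normalizing factor $1/\sqrt{2}$ in the definition of $e_{n,m}$ and noting that $h \mapsto 2(h-1)/h$ is increasing, so the worst case for the second estimate occurs at $h = \alpha+\delta$.
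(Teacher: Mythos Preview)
The proposal is correct and follows essentially the same route as the paper: both rewrite membership in $P_{n,m}$ as the inner-product condition $\langle x, e_{n,m}\rangle \geq 1/h$, expand $\Vert x - e_{n,m}\Vert_{\ell_2}^2 = 2 - 2\langle x, e_{n,m}\rangle$, and then use $h \leq \alpha+\delta \leq \tfrac{200}{199}$ to deduce the bound $\tfrac{1}{10}$. Your presentation is slightly more detailed (spelling out the equivalence $\sqrt{2(h-1)/h}\leq \tfrac{1}{10} \Leftrightarrow h \leq \tfrac{200}{199}$), but the argument is identical.
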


\begin{proof}
We compute
\begin{align*}
\Vert x - e_{n,m} \Vert_{\ell_{2}} \leq \sqrt{\frac{2(h-1)}{h}} \; & \Leftrightarrow \; \Vert x \Vert_{\ell_{2}}^{2} - 2 \langle x, e_{n,m} \rangle + \Vert e_{n,m} \Vert_{\ell_{2}}^{2} \leq \frac{2(h-1)}{h} \\
 & \Leftrightarrow \; 2 \langle x, e_{n,m} \rangle \geq \frac{2}{h} \\
 & \Leftrightarrow \; \frac{1}{\sqrt{2}} \cdot h \cdot (x_{n}+x_{m}) \geq 1 \\
 & \Leftrightarrow \; x \in P_{n,m}.
\end{align*}
Finally, since $ h \leq \alpha + \delta \leq \frac{200}{199} $, we have $ \sqrt{\frac{2(h-1)}{h}} \leq \frac{1}{10} $.
\end{proof}

Our proof of Theorem~\ref{thmGHtoK} is based on the following technical lemma.

\begin{lemma} \label{lemmGHtoK}
Let $ f, g : [\mathbb{N}]^{2} \to [0, 1] $. If $ \rho_{K}((\ell_{2}, \Vert \cdot \Vert_{f}), (\ell_{2}, \Vert \cdot \Vert_{g})) < \eta $ for some $ \eta $ satisfying $ 0 < \eta < \frac{1}{100}, \eta < \frac{1}{10} \cdot \frac{\sqrt{\alpha^{2}-1}}{\alpha} $ and $ \eta \leq \frac{1}{2}(1 - \frac{1}{\alpha}) $, then
$$ \exists \pi \in S_{\infty} \forall \{ n, m \} \in [\mathbb{N}]^{2} : \big| g(\pi(n), \pi(m)) - f(n, m) \big| < \frac{3}{\delta} \cdot \eta. $$
\end{lemma}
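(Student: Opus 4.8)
The plan is to read off the permutation $\pi$ --- and the closeness of $f$ and $g$ under it --- from the flat faces of the two renormed unit balls. The point of the construction is that $B_f:=\{x:\|x\|_f\le1\}$ is the Euclidean ball of $\ell_2$ with, for each $\{n,m\}$ and each sign $\sigma$, a spherical cap cut off near $\sigma e_{n,m}$; what is left of $S_f:=\{x:\|x\|_f=1\}$ near $\sigma e_{n,m}$ is a genuinely flat Euclidean disk $\sigma\mathcal{F}^f_{n,m}:=\{x\in S_f:\sigma(x_n+x_m)=\sqrt2/h_{n,m}\}$ of Euclidean radius $\sqrt{1-h_{n,m}^{-2}}$ and centre $\sigma v^f_{n,m}$, where $h_{n,m}:=\alpha+\delta f(n,m)\in[\alpha,\alpha+\delta]$ and $v^f_{n,m}:=h_{n,m}^{-1}e_{n,m}$. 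Using only $\alpha+\delta\le 200/199$ one checks that these disks are the only non-degenerate segments contained in $S_f$, that $S_f$ is strictly convex elsewhere, that $\|\cdot\|_f$ coincides with $\|\cdot\|_{\ell_2}$ both on each face and on the round part, that (by Lemma~\ref{lemmsep}) distinct faces are uniformly separated, with the distance between the centres of $\mathcal{F}^f_{n,m}$ and $\mathcal{F}^f_{n',m'}$ close to $1$ when $\{n,m\}\cap\{n',m'\}\neq\emptyset$ and close to $\sqrt2$ when they are disjoint, and that the radius of a face determines $h_{n,m}$ and hence $f(n,m)$. The analogous statements hold for $g$, with $h'_{n,m}:=\alpha+\delta g(n,m)$. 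For the proof, since $\rho_K((\ell_2,\|\cdot\|_f),(\ell_2,\|\cdot\|_g))<\eta$ we may place $X_f=(\ell_2,\|\cdot\|_f)$ and $X_g=(\ell_2,\|\cdot\|_g)$ isometrically inside a Banach space $Z$ with $\rho_H^Z(B_f,B_g)<\eta$, and write $\|\cdot\|_Z$ for the ambient norm, which restricts to $\|\cdot\|_f$ on $X_f$ and to $\|\cdot\|_g$ on $X_g$.

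The heart of the argument is to show that \emph{faces are matched to faces}: for every $\{n,m\}$ there are a unique $\{n',m'\}=:\Psi\{n,m\}$ and a sign $\varsigma$ with $\rho_H^Z(\mathcal{F}^f_{n,m},\varsigma\mathcal{F}^g_{n',m'})=O(\eta)$, and $\Psi$ is a bijection of $[\mathbb{N}]^2$. To see this, I would take $v^f_{n,m}$ together with three points near the bounding circle of the disk $\mathcal{F}^f_{n,m}$ which affinely span it; the pairwise connecting segments of these four points lie in $S_f$. Their nearest points in $S_g$ form a cluster $\{b_0,b_1,b_2,b_3\}\subseteq S_g$ whose pairwise connecting segments lie within $O(\eta)$ of $S_g$, so every midpoint $\tfrac12(b_i+b_j)$ has $\|\cdot\|_g\ge 1-O(\eta)$, that is $\|b_i+b_j\|_g\ge 2-O(\eta)$. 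A direct computation then forces each pair $b_i,b_j$ into a common flat face of $B_g$: if $\|b_i+b_j\|_{\ell_2}$ were also within $O(\eta)$ of $2$ the parallelogram law would make $b_i$ and $b_j$ nearly equal, contradicting that the cluster is spread on the scale $\gtrsim\sqrt{\alpha^2-1}/\alpha\gg\eta$; and otherwise the value $\|b_i+b_j\|_g$ must come from a slab term, which --- because $\|\cdot\|_g\le\tfrac{200}{199}\|\cdot\|_{\ell_2}$ leaves only a tiny margin --- pins the direction of $b_i+b_j$ to within $\tfrac1{10}$ of some $\pm e_{n',m'}$, whence (by Lemma~\ref{lemmPnm} and the $1$-separation of the vectors $\pm e_{a,b}$) $b_i$ and $b_j$ both lie in the single face $\varsigma\mathcal{F}^g_{n',m'}$. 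Since a nonzero vector lies in at most one $\pm P_{a,b}$, all four $b_i$ lie in the same face, and comparing with the spanning cluster gives $\rho_H^Z(\mathcal{F}^f_{n,m},\varsigma\mathcal{F}^g_{n',m'})=O(\eta)$. Interchanging the roles of $f$ and $g$ produces the inverse assignment, so $\Psi$ is a bijection.

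From here the proof is routine. Because $\Psi$ displaces face-centres by $O(\eta)$, and the hypotheses on $\eta$ make $O(\eta)$ smaller than the gap ($\approx\sqrt2-1$) between the intersecting and disjoint centre-distances, $\Psi$ preserves the relation ``$\{n,m\}$ and $\{n',m'\}$ share an index''; that is, $\Psi$ is an automorphism of the graph $L(K_{\mathbb{N}})$ on $[\mathbb{N}]^2$. Its maximal cliques are exactly the stars $\mathrm{st}(v)=\{\{v,u\}:u\neq v\}$, so $\Psi$ maps stars to stars and hence is induced by a bijection $\pi\in S_\infty$; since $\{n,m\}$ is the unique common member of $\mathrm{st}(n)$ and $\mathrm{st}(m)$, we get $\Psi\{n,m\}=\{\pi(n),\pi(m)\}$. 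Finally $\rho_H^Z(\mathcal{F}^f_{n,m},\varsigma\mathcal{F}^g_{\pi(n),\pi(m)})=O(\eta)$ forces the Euclidean radii of the two faces to differ by $O(\eta)$, i.e. $\bigl|\sqrt{1-h_{n,m}^{-2}}-\sqrt{1-(h'_{\pi(n),\pi(m)})^{-2}}\bigr|=O(\eta)$; since $h\mapsto\sqrt{1-h^{-2}}$ has derivative bounded below on $[\alpha,\alpha+\delta]\subseteq(1,\tfrac{200}{199}]$ this gives $|h_{n,m}-h'_{\pi(n),\pi(m)}|<3\eta$, i.e. $|f(n,m)-g(\pi(n),\pi(m))|<3\eta/\delta$, which is the assertion. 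Tracking the absolute constants through the last two paragraphs is exactly where the three quantitative hypotheses are consumed: $\eta<\tfrac1{10}\tfrac{\sqrt{\alpha^2-1}}{\alpha}$ makes the cluster spread dominate the error terms, $\eta\le\tfrac12(1-\tfrac1\alpha)$ keeps the relevant vectors bounded away from where the estimates degenerate, and $\eta<\tfrac1{100}$ absorbs the universal constants (in particular the slack left by the $\tfrac{200}{199}$-equivalence).

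The main obstacle is the middle step: the quantitative convex-geometric statement that Hausdorff-closeness of the two renormed balls forces a face-to-face correspondence with $O(\eta)$ error. Once this is in hand, the combinatorial identification of $\Psi$ with a permutation and the one-variable estimate relating face radius to $f$ and $g$ are both straightforward.
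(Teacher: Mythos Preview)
Your geometric picture is right, and the combinatorial endgame (passing from a map $\Psi$ on $[\mathbb N]^2$ that preserves ``share an index'' to a permutation $\pi$) is essentially the paper's. But the middle step, which you yourself flag as the obstacle, has real gaps and misses the idea that makes the paper's proof go through with the stated constant.

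First, the faces $\mathcal F^f_{n,m}$ are codimension-one caps of the Hilbert ball, hence infinite-dimensional; four points do not affinely span them, and matching four points does not give $\rho_H^Z(\mathcal F^f_{n,m},\varsigma\mathcal F^g_{n',m'})=O(\eta)$, which is what your final radius-derivative argument would need. Second, your parallelogram dichotomy does not close as written: in your ``Case~1'' you get $\|b_i-b_j\|_{\ell_2}<2\sqrt{2\eta}$, while the hypotheses only give $\eta\le\tfrac12(1-\tfrac1\alpha)\approx r_{\min}^2/4$, so $2\sqrt{2\eta}$ can be as large as $\sqrt2\,r_{\min}$; this does not contradict a spread of order $r_{\min}$ (which is what you get for your centre--to--boundary pairs $a_0,a_i$). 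So you cannot place all four images on a common face, and the step ``$b_i+b_j\in Q_{n',m'}\Rightarrow b_i,b_j\in\varsigma\mathcal F^g_{n',m'}$'' is also unjustified as stated. Most importantly, you never track the constants, so the specific bound $|h-h'|<3\eta$ (i.e.\ $|f-g|<3\eta/\delta$) is asserted but not proved.

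The paper avoids all of this by a device you do not have: before choosing the two antipodal boundary points $p_\pm=\tfrac1h e_{n,m}\pm\tfrac{\sqrt{h^2-1}}{h}x$ on $\mathcal F^f_{n,m}$, it first (Step~1 of its Claim) selects the unit direction $x\perp e_{n,m}$ so that $Ix$ is at $Z$-distance $\ge\tfrac12$ from every $\pm Je_{k,l}$. This single choice forces $q_+-q_-$ to lie outside every $\pm Q_{k,l}$, hence $\|q_+-q_-\|_g=\|q_+-q_-\|_{\ell_2}$ exactly (Step~4), and the parallelogram law then gives the sharp bound $\|q_++q_-\|_{\ell_2}/2<1/h+\eta$ (Step~5). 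From $\|q_++q_-\|_g/2>1-\eta$ and the fact that on $\pm Q_{k,l}$ one has $\|w\|_g\le h'\|w\|_{\ell_2}$, one reads off $h-h'<3\eta$ directly (Step~8)---no Hausdorff comparison of faces, no radius-derivative estimate. This is where the three quantitative hypotheses on $\eta$ are actually consumed, and where the constant $3$ comes from.
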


Due to the assumption of the lemma, we can pick a Banach space $ Z $ and linear isometries $ I : (\ell_{2}, \Vert \cdot \Vert_{f}) \to Z $ and $ J : (\ell_{2}, \Vert \cdot \Vert_{g}) \to Z $ such that
$$ \rho_{H}^{Z} \big( I(B_{(\ell_{2}, \Vert \cdot \Vert_{f})}), J(B_{(\ell_{2}, \Vert \cdot \Vert_{g})}) \big) < \eta. $$

We need to prove the following claim first.

\begin{claim} \label{claimGHtoK}
For every $ \{ n, m \} \in [\mathbb{N}]^{2} $, there are $ \{ k, l \} \in [\mathbb{N}]^{2} $ and $ s \in \{ -1, 1 \} $ such that
$$ \big\Vert Ie_{n,m} - s \cdot Je_{k,l} \big\Vert_{Z} < \frac{1}{7} $$
and, moreover,
$$ f(n, m) - g(k, l) < \frac{3}{\delta} \cdot \eta. $$
\end{claim}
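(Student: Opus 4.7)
The plan is to scale $e_{n,m}$ down to the unit $\Vert \cdot \Vert_f$-sphere, transport the scaled vector through the Hausdorff condition to $B_{(\ell_2, \Vert \cdot \Vert_g)}$, and recognize its image as close to the center of a flat spot in $\partial B_{(\ell_2, \Vert \cdot \Vert_g)}$. Concretely, set $h := \Vert e_{n,m} \Vert_f = \alpha + \delta f(n,m)$ and $u := e_{n,m}/h$, so that $u$ is the center of the flat spot $P_{n,m} \cap S_{(\ell_2, \Vert \cdot \Vert_f)}$ and $\Vert Iu \Vert_Z = 1$. By the definition of $\rho_K$ pick $Z$ and linear isometries $I, J$ realizing Hausdorff distance $< \eta$ between the unit balls; the Hausdorff hypothesis then furnishes $v \in B_{(\ell_2, \Vert \cdot \Vert_g)}$ with $\Vert Iu - Jv \Vert_Z < \eta$, whence $\Vert v \Vert_g > 1 - \eta$.

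The crucial step I would then carry out is to exhibit $\{k,l\}$ and $s \in \{-1,1\}$ such that $sv$ lies in the $g$-analogue of $P_{k,l}$, i.e., some $g$-flat-spot constraint is active at $v$. I achieve this by extending the unique supporting functional $\phi_{n,m}(x) = \tfrac{h}{\sqrt{2}}(x_n + x_m)$ of $B_{(\ell_2, \Vert \cdot \Vert_f)}$ at $u$ to $\tilde{\phi}_{n,m} \in B_{Z^*}$ with $\tilde{\phi}_{n,m}(Iu) = 1$ via Hahn--Banach, and exploiting $\tilde{\phi}_{n,m}(Jv) > 1 - \eta$ together with the hypotheses $\eta \leq \tfrac{1}{2}(1 - 1/\alpha)$ and $\eta < \tfrac{1}{10}\sqrt{\alpha^2-1}/\alpha$ and the equivalence $\Vert \cdot \Vert_{\ell_2} \leq \Vert \cdot \Vert_g \leq \tfrac{200}{199}\Vert \cdot \Vert_{\ell_2}$; this combination is designed to rule out $v$ lying on the ``$\ell_2$-spherical'' portion of $\partial B_{(\ell_2, \Vert \cdot \Vert_g)}$, since there $\Vert v \Vert_{\ell_2} > 1 - \eta$ would hold, whereas $\Vert u \Vert_{\ell_2} = 1/h \leq 1/\alpha$, and the resulting discrepancy cannot be bridged through $Z$ under the standing smallness. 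Once $v$ is pinned to a flat-spot cap, Lemma~\ref{lemmPnm} applied on the $g$-side, after renormalization $v \mapsto v/\Vert v \Vert_g \in S_{(\ell_2, \Vert \cdot \Vert_g)}$, yields $\Vert v - sv_{k,l}\Vert_{\ell_2} \leq \tfrac{1}{10} + O(\eta)$ where $v_{k,l} := e_{k,l}/(\alpha + \delta g(k,l))$, and hence proximity in $\Vert \cdot \Vert_g$ as well.

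From this matching, the first inequality follows by triangle inequality: scaling by $h$ produces $\Vert Ie_{n,m} - s\tfrac{h}{\alpha+\delta g(k,l)} Je_{k,l}\Vert_Z < h(\eta + \Vert v - sv_{k,l}\Vert_g)$, and correcting by $\big|h - (\alpha+\delta g(k,l))\big| \leq \delta \leq \tfrac{1}{199}$ gives $\Vert Ie_{n,m} - sJe_{k,l}\Vert_Z < \tfrac{1}{7}$ under $\eta < \tfrac{1}{100}$. For the second inequality, I apply $\tilde{\phi}_{n,m}$: one has $\tilde{\phi}_{n,m}(Ie_{n,m}) = h$ and $|s\tilde{\phi}_{n,m}(Je_{k,l})| \leq \Vert Je_{k,l}\Vert_Z = \alpha+\delta g(k,l)$, while the proximity $\Vert Iu - sJv_{k,l}\Vert_Z \leq \eta + \Vert v - sv_{k,l}\Vert_g$ produces the lower bound $s\tilde{\phi}_{n,m}(Jv_{k,l}) > 1 - \eta - \Vert v - sv_{k,l}\Vert_g$ and hence $s\tilde{\phi}_{n,m}(Je_{k,l}) > (\alpha+\delta g(k,l))(1 - O(\eta))$; careful bookkeeping of constants then yields the strict estimate $\delta(f(n,m) - g(k,l)) < 3\eta$.

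The main obstacle is the second paragraph --- forcing $v$ to sit in a flat-spot cap of $B_{(\ell_2, \Vert \cdot \Vert_g)}$ rather than on its $\ell_2$-spherical part --- since this is precisely where the delicate quantitative assumptions relating $\eta$ to $\alpha$ (in particular $\eta < \tfrac{1}{10}\sqrt{\alpha^2-1}/\alpha$, which asserts that $\eta$ is small compared to the $\ell_2$-radius of any $g$-flat-spot) must be converted into a separation that persists through the embedding into $Z$.
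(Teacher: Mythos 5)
There is a genuine gap, and it sits exactly where you flag ``the main obstacle'': the argument that $v$ must land in a flat-spot cap of $B_{(\ell_{2},\Vert\cdot\Vert_{g})}$ rather than on its spherical part. The Hahn--Banach functional $\tilde{\phi}_{n,m}$ only certifies $\tilde{\phi}_{n,m}(Jv)>1-\eta$, i.e.\ that $v$ is near the boundary of $B_{(\ell_{2},\Vert\cdot\Vert_{g})}$ in one dual direction; this is satisfied by plenty of points on the round part of that boundary. The dichotomy you want to exploit is that $\Vert u\Vert_{\ell_{2}}=1/h\leq 1/\alpha$ while a round-part point has $\Vert v\Vert_{\ell_{2}}=\Vert v\Vert_{g}>1-\eta$, but there is no mechanism for transporting the \emph{Euclidean} norm of $u$ through $Z$: the embeddings $I$ and $J$ are isometries only for $\Vert\cdot\Vert_{f}$ and $\Vert\cdot\Vert_{g}$, and both of these are within a factor $\tfrac{200}{199}$ of $\Vert\cdot\Vert_{\ell_{2}}$, so the discrepancy between $1/h$ and $1-\eta$ is far too small to be detected by any norm comparison in $Z$ at a single point. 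Flatness versus roundness is a statement about a neighbourhood on the sphere, not about one point, and your proposal never interrogates a neighbourhood.

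The paper's proof resolves precisely this by a two-point argument: it takes the two ``antipodal'' points $p_{\pm}=\tfrac{1}{h}e_{n,m}\pm\tfrac{\sqrt{h^{2}-1}}{h}x$ of the flat face (at definite $\ell_{2}$-distance $2\sqrt{h^{2}-1}/h$, both of $f$-norm $1$, with midpoint of $f$-norm $1$), transports them to $q_{\pm}$, and uses the parallelogram law to bound $\Vert q_{+}+q_{-}\Vert_{\ell_{2}}$ from above by $2/h+2\eta$: if the midpoint avoided every flat spot its $g$-norm would equal its $\ell_{2}$-norm, contradicting $\Vert q_{+}+q_{-}\Vert_{g}/2>1-\eta$ and $\eta\leq\tfrac12(1-\tfrac1\alpha)$. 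Even this requires a further ingredient your sketch has no analogue of: to feed the parallelogram law one needs $\Vert q_{+}-q_{-}\Vert_{\ell_{2}}=\Vert q_{+}-q_{-}\Vert_{g}$, i.e.\ that $q_{+}-q_{-}$ itself avoids all flat spots, and this is arranged by first choosing the direction $x$ orthogonal to $e_{n,m}$ so that $Ix$ stays at distance $\geq\tfrac12$ from every $\pm Je_{k,l}$ (Step~1 of the paper's proof). Without replacements for both of these devices the flat-spot identification does not go through, and the remaining two paragraphs of your proposal (which are otherwise plausible in spirit and close to the paper's Steps~7 and~8) have nothing to stand on.
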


\begin{proof}
We prove the claim in eight steps. Let us fix $ \{ n, m \} \in [\mathbb{N}]^{2} $ and keep the notation $ h = \alpha + \delta \cdot f(n, m) $ throughout the proof. Analogously as above, we define
$$ Q_{k,l} = \Big\{ x \in \ell_{2} : \Vert x \Vert_{\ell_{2}} \leq \frac{1}{\sqrt{2}} \cdot \big( \alpha + \delta \cdot g(k, l) \big) \cdot (x_{k}+x_{l}) \Big\}. $$

Step 1: We show that there is $ x $ orthogonal to $ e_{n,m} $ such that $ \Vert x \Vert_{\ell_{2}} = 1 $ and $ \Vert Ix \mp Je_{k,l} \Vert_{Z} \geq \frac{1}{2} $ for all $ k \neq l $. This is an easy consequence of the fact that the vectors $ \pm Je_{k, l} $ are $ 1 $-separated (which follows from Lemma~\ref{lemmsep} and from $ \Vert \cdot \Vert_{g} \geq \Vert \cdot \Vert_{\ell_{2}} $). Indeed, let $ E \subseteq \ell_{2} $ be a two-dimensional subspace orthogonal to $ e_{n,m} $. Let us pick $ x \in S_{E} $. If $ \Vert Ix \mp Je_{k,l} \Vert_{Z} \geq \frac{1}{2} $ for all $ k\neq l $, we are done. In the opposite case, there is a point $ w = Je_{k,l} $ or $ w = -Je_{k,l} $ for which $ \Vert Ix - w \Vert_{Z} < \frac{1}{2} $. Since $ I(S_{E}) $ is a closed curve in $ Z $ with diameter at least $ 2 $, we can find $ x' \in S_{E} $ such that $ \Vert Ix' - w \Vert_{Z} = \frac{1}{2} $. Then $ x' $ works, as the distance of $ Ix' $ to other vectors is at least $ 1 - \frac{1}{2} $ by the triangle inequality.

Step 2: We denote
$$ p_{+} = \frac{1}{h}e_{n,m} + \frac{\sqrt{h^{2}-1}}{h}x, \quad p_{-} = \frac{1}{h}e_{n,m} - \frac{\sqrt{h^{2}-1}}{h}x. $$
It is easy to see that
$$ \Vert p_{+} \Vert_{f} = \Vert p_{-} \Vert_{f} = 1. $$
Let us choose $ q_{+} $ and $ q_{-} $ with $ \Vert q_{+} \Vert_{g} \leq 1, \Vert q_{-} \Vert_{g} \leq 1, $ satisfying
$$ \Vert Ip_{+} - Jq_{+} \Vert_{Z} < \eta, \quad \Vert Ip_{-} - Jq_{-} \Vert_{Z} < \eta. $$

Step 3: We show that
$$ \frac{\Vert q_{+}+q_{-} \Vert_{g}}{2} > 1-\eta. $$
Since $ \Vert (Ip_{+} + Ip_{-})/2 - (Jq_{+} + Jq_{-})/2 \Vert_{Z} < \eta $, we have
$$ \frac{\Vert q_{+}+q_{-} \Vert_{g}}{2} = \Big\Vert \frac{1}{2} (Jq_{+} + Jq_{-}) \Big\Vert_{Z} > \Big\Vert \frac{1}{2} (Ip_{+} + Ip_{-}) \Big\Vert_{Z} - \eta = \Big\Vert \frac{1}{h}e_{n,m} \Big\Vert_{f} - \eta. $$
As $ \Vert e_{n,m} \Vert_{f} = h $, the desired inequality follows.

Step 4: We show that
$$ \Vert q_{+} - q_{-} \Vert_{g} = \Vert q_{+} - q_{-} \Vert_{\ell_{2}}, $$
by proving that $ q_{+} - q_{-} $ does not belong to any $ \pm Q_{k,l} $. Let us denote
$$ u = \frac{h}{2\sqrt{h^{2}-1}} \cdot (q_{+} - q_{-}) \quad \textrm{and} \quad z = \frac{1}{\Vert u \Vert_{\ell_{2}}} \cdot u. $$
Then
\begin{align*}
\Vert Ju - Ix \Vert_{Z} & = \frac{h}{2\sqrt{h^{2}-1}} \cdot \big\Vert (Jq_{+} - Jq_{-}) - (Ip_{+} - Ip_{-}) \big\Vert_{Z} \\
 & < \frac{h}{2\sqrt{h^{2}-1}} \cdot 2\eta = \frac{h}{\sqrt{h^{2}-1}} \cdot \eta \leq \frac{\alpha}{\sqrt{\alpha^{2}-1}} \cdot \eta < \frac{1}{10}
\end{align*}
by an assumption of Lemma~\ref{lemmGHtoK}. By the choice of $ x $ (Step 1), we obtain for all $ k \neq l $ that
$$ \Vert u \mp e_{k,l} \Vert_{g} = \Vert Ju \mp Je_{k,l} \Vert_{Z} > \Vert Ix \mp Je_{k,l} \Vert_{Z} - \frac{1}{10} \geq \frac{1}{2} - \frac{1}{10} = \frac{2}{5}. $$
Also, it is easy to check that
$$ | \Vert u \Vert_{\ell_{2}} - 1 | < \frac{1}{199} + \frac{1}{10}. $$
Indeed, we have $ \Vert u \Vert_{\ell_{2}} \leq \Vert u \Vert_{g} < \Vert x \Vert_{f} + \frac{1}{10} \leq \frac{200}{199} \cdot \Vert x \Vert_{\ell_{2}} + \frac{1}{10} = 1 + \frac{1}{199} + \frac{1}{10} $ and $ \Vert u \Vert_{\ell_{2}} \geq \frac{199}{200} \cdot \Vert u \Vert_{g} > \frac{199}{200} \cdot (\Vert x \Vert_{f} - \frac{1}{10}) \geq \frac{199}{200} \cdot (\Vert x \Vert_{\ell_{2}} - \frac{1}{10}) = \frac{199}{200} \cdot (1 - \frac{1}{10}) > 1 - (\frac{1}{199} + \frac{1}{10}) $.

Since $ z = u - (\Vert u \Vert_{\ell_{2}} - 1)z $ and $ \Vert z \Vert_{\ell_{2}} = 1 $, we obtain for all $ k \neq l $ that
$$ \Vert z \mp e_{k,l} \Vert_{\ell_{2}} \geq \Vert u \mp e_{k,l} \Vert_{\ell_{2}} - |\Vert u \Vert_{\ell_{2}} - 1| \Vert z \Vert_{\ell_{2}} > \frac{199}{200} \cdot \frac{2}{5} - \Big( \frac{1}{199} + \frac{1}{10} \Big) > \frac{1}{10}. $$
By Lemma~\ref{lemmPnm}, $ z $ does not belong to $ \pm Q_{k,l} $. The same holds for $ q_{+} - q_{-} $, as it is a multiple of $ z $.

Step 5: We show that
$$ \frac{\Vert q_{+}+q_{-} \Vert_{\ell_{2}}}{2} < \frac{1}{h} + \eta. $$
By the parallelogram law,
$$ \Vert q_{+} + q_{-} \Vert_{\ell_{2}}^{2} + \Vert q_{+} - q_{-} \Vert_{\ell_{2}}^{2} = 2 \Vert q_{+} \Vert_{\ell_{2}}^{2} + 2 \Vert q_{-} \Vert_{\ell_{2}}^{2} \leq 2 \Vert q_{+} \Vert_{g}^{2} + 2 \Vert q_{-} \Vert_{g}^{2} \leq 4. $$
Using the conclusion of the previous step,
$$ \Vert q_{+} - q_{-} \Vert_{\ell_{2}} = \Vert q_{+} - q_{-} \Vert_{g} > \Vert p_{+} - p_{-} \Vert_{f} - 2\eta \geq \Vert p_{+} - p_{-} \Vert_{\ell_{2}} - 2\eta = 2 \cdot \frac{\sqrt{h^{2}-1}}{h} - 2\eta, $$
$$ \Vert q_{+} - q_{-} \Vert_{\ell_{2}}^{2} > 4 \cdot \frac{h^{2}-1}{h^{2}} - 8 \cdot \frac{\sqrt{h^{2}-1}}{h} \cdot \eta + 4\eta^{2} > 4 \cdot \frac{h^{2}-1}{h^{2}} - 8 \cdot \frac{1}{h} \cdot \eta $$
and
$$ \Vert q_{+} + q_{-} \Vert_{\ell_{2}}^{2} < 4 - 4 \cdot \frac{h^{2}-1}{h^{2}} + 8 \cdot \frac{1}{h} \cdot \eta < \frac{4}{h^{2}} + 8 \cdot \frac{1}{h} \cdot \eta + 4\eta^{2} = 4 \cdot \Big( \frac{1}{h} + \eta \Big)^{2}. $$
The desired inequality follows.

Step 6: We realize that $ q_{+}+q_{-} $ belongs to some $ Q_{k,l} $ or $ -Q_{k,l} $. In the opposite case, we obtain $ \Vert q_{+} + q_{-} \Vert_{g} = \Vert q_{+} + q_{-} \Vert_{\ell_{2}} $ and
$$ 1-\eta < \frac{\Vert q_{+} + q_{-} \Vert_{g}}{2} = \frac{\Vert q_{+}+q_{-} \Vert_{\ell_{2}}}{2} < \frac{1}{h} + \eta \leq \frac{1}{\alpha} + \eta, $$
which is disabled by an assumption of Lemma~\ref{lemmGHtoK}.

Step 7: Let $ q_{+}+q_{-} $ belong to $ s \cdot Q_{k,l} $, where $ s \in \{ -1, 1 \} $. We show that
$$ \big\Vert Ie_{n,m} - s \cdot Je_{k,l} \big\Vert_{Z} < \frac{1}{7} $$
for such $ k, l $ and $ s $. To show this, we denote
$$ a = \frac{1}{\Vert q_{+}+q_{-} \Vert_{\ell_{2}}} \cdot (q_{+}+q_{-}). $$
Then $ a $ belongs to $ s \cdot Q_{k,l} $ as well. Lemma~\ref{lemmPnm} provides
$$ \Vert a - s \cdot e_{k,l} \Vert_{\ell_{2}} \leq \frac{1}{10}, $$
and so
$$ \Vert Ja - s \cdot Je_{k,l} \Vert_{Z} = \Vert a - s \cdot e_{k,l} \Vert_{g} \leq \frac{200}{199} \cdot \frac{1}{10}. $$
Since $ \Vert (Ip_{+} + Ip_{-})/2 - (Jq_{+} + Jq_{-})/2 \Vert_{Z} < \eta < \frac{1}{100} $, we obtain
$$ \Big\Vert \frac{1}{h} Ie_{n,m} - \frac{\Vert q_{+}+q_{-} \Vert_{\ell_{2}}}{2} Ja \Big\Vert_{Z} < \frac{1}{100} $$
and
$$ \Vert Ie_{n,m} - Ja \Vert_{Z} < \frac{1}{100} + \Big( 1 - \frac{1}{h} \Big) \Vert Ie_{n,m} \Vert_{Z} + \Big( 1 - \frac{\Vert q_{+}+q_{-} \Vert_{\ell_{2}}}{2} \Big) \Vert Ja \Vert_{Z}. $$
Since $ \Vert e_{n,m} \Vert_{f} = h, \Vert a \Vert_{\ell_{2}} = 1 $ and
$$ \frac{\Vert q_{+}+q_{-} \Vert_{\ell_{2}}}{2} \geq \frac{199}{200} \cdot \frac{\Vert q_{+}+q_{-} \Vert_{g}}{2} > \frac{199}{200} \cdot (1-\eta) > \frac{199}{200} \cdot \frac{99}{100}, $$
we obtain
$$ \Vert Ie_{n,m} - Ja \Vert_{Z} < \frac{1}{100} + (h - 1) + \Big( 1 - \frac{199}{200} \cdot \frac{99}{100} \Big) \cdot \frac{200}{199} $$
and
\begin{align*}
\Vert Ie_{n,m} - s \cdot Je_{k,l} \Vert_{Z} & \leq \Vert Ie_{n,m} - Ja \Vert_{Z} + \Vert Ja - s \cdot Je_{k,l} \Vert_{Z} \\
 & < \frac{1}{100} + \frac{1}{199} + \Big( 1 - \frac{199}{200} \cdot \frac{99}{100} \Big) \cdot \frac{200}{199} + \frac{200}{199} \cdot \frac{1}{10} \\
 & < \frac{1}{7}.
\end{align*}

Step 8: Let $ q_{+}+q_{-} $ belong to $ Q_{k,l} $ or $ -Q_{k,l} $. We show that
$$ f(n, m) - g(k, l) < \frac{3}{\delta} \cdot \eta $$
for such $ k $ and $ l $. If we denote $ h' = \alpha + \delta \cdot g(k, l) $, then the elements of $ \pm Q_{k,l} $ fulfill
$$ \Vert w \Vert_{g} = \frac{1}{\sqrt{2}} \cdot h' \cdot |w_{k}+w_{l}| = h' \cdot | \langle w, e_{k,l} \rangle | \leq h' \cdot \Vert w \Vert_{\ell_{2}}, \quad w \in \pm Q_{k,l}. $$
We obtain
$$ 1-\eta < \frac{\Vert q_{+}+q_{-} \Vert_{g}}{2} \leq h' \cdot \frac{\Vert q_{+}+q_{-} \Vert_{\ell_{2}}}{2} < h' \cdot \Big( \frac{1}{h} + \eta \Big), $$
and so
$$ h \cdot (1-\eta) < h' \cdot (1 + h \eta). $$
It follows that
$$ \delta \cdot \big( f(n, m) - g(k, l) \big) = h - h' < h \eta \cdot (1 + h') \leq \frac{200}{199} \cdot \Big( 1 + \frac{200}{199} \Big) \cdot \eta < 3\eta, $$
which provides the desired inequality.
\end{proof}

\begin{proof}[Proof of Lemma~\ref{lemmGHtoK}]
For every $ \{ n, m \} \in [\mathbb{N}]^{2} $, Claim~\ref{claimGHtoK} provides $ \Sigma(n, m) \in [\mathbb{N}]^{2} $ and $ s(n, m) \in \{ -1, 1 \} $ such that
$$ \big\Vert Ie_{n,m} - s(n, m) \cdot Je_{\Sigma(n, m)} \big\Vert_{Z} < \frac{1}{7} $$
and, moreover,
$$ f(n, m) - g(\Sigma(n, m)) < \frac{3}{\delta} \cdot \eta. $$
Let us make a series of observations concerning $ \Sigma(n, m) $ and $ s(n, m) $.

(a) If $ \{ n, m \} $ and $ \{ n', m' \} $ have exactly one common element, then the same holds for $ \Sigma(n, m) $ and $ \Sigma(n', m') $. Indeed, using Lemma~\ref{lemmsep}, we can compute
\begin{align*}
\big\Vert s(n, m & ) \cdot e_{\Sigma(n, m)} - s(n', m') \cdot e_{\Sigma(n', m')} \big\Vert_{\ell_{2}} \\
 & \leq \big\Vert s(n, m) \cdot e_{\Sigma(n, m)} - s(n', m') \cdot e_{\Sigma(n', m')} \big\Vert_{g} \\
 & < \Vert e_{n, m} - e_{n', m'} \Vert_{f} + \frac{1}{7} + \frac{1}{7} \leq \frac{200}{199} \cdot 1 + \frac{1}{7} + \frac{1}{7} < \sqrt{2}, \\
\big\Vert s(n, m & ) \cdot e_{\Sigma(n, m)} - s(n', m') \cdot e_{\Sigma(n', m')} \big\Vert_{\ell_{2}} \\
 & \geq \frac{199}{200} \cdot \big\Vert s(n, m) \cdot e_{\Sigma(n, m)} - s(n', m') \cdot e_{\Sigma(n', m')} \big\Vert_{g} \\
 & > \frac{199}{200} \cdot \Big( \Vert e_{n, m} - e_{n', m'} \Vert_{f} - \frac{1}{7} - \frac{1}{7} \Big) \geq \frac{199}{200} \cdot \Big( 1 - \frac{1}{7} - \frac{1}{7} \Big) > 0,
\end{align*}
and it is sufficient to apply Lemma~\ref{lemmsep} again (in fact, we obtain also $ s(n, m) = s(n', m') $).

(b) If $ \{ n, m \} $ and $ \{ n', m' \} $ are disjoint, then the same holds for $ \Sigma(n, m) $ and $ \Sigma(n', m') $. This can be shown by the same method as above. This time, we have $ \Vert e_{n, m} - e_{n', m'} \Vert_{\ell_{2}} = \sqrt{2} $ and
$$ \big\Vert s(n, m) \cdot e_{\Sigma(n, m)} - s(n', m') \cdot e_{\Sigma(n', m')} \big\Vert_{\ell_{2}} < \frac{200}{199} \cdot \sqrt{2} + \frac{1}{7} + \frac{1}{7} < \sqrt{3}, $$
$$ \big\Vert s(n, m) \cdot e_{\Sigma(n, m)} - s(n', m') \cdot e_{\Sigma(n', m')} \big\Vert_{\ell_{2}} > \frac{199}{200} \cdot \Big( \sqrt{2} - \frac{1}{7} - \frac{1}{7} \Big) > 1. $$

(c) For every $ n \in \mathbb{N} $, there is $ \pi(n) \in \mathbb{N} $ such that $ \pi(n) \in \Sigma(n, m) $ for all $ m \neq n $ (such $\pi(n)$ is unique, but we do not prove it here as it is not needed in what follows). Assume the opposite and pick distinct $ p, q $ different from $ n $. By (a), we can denote the elements of $ \Sigma(n, p) $ and $ \Sigma(n, q) $ by $ a, b, c $ in the way that
$$ \Sigma(n, p) = \{ a, b \}, \quad \Sigma(n, q) = \{ a, c \}. $$
By our assumption, there is $ m \neq n $ such that $ a $ does not belong to $ \Sigma(n, m) $. Then the only possibility for $ \Sigma(n, m) $ allowed by (a) is
$$ \Sigma(n, m) = \{ b, c \}. $$
Pick some $ r $ different from $ n, m, p, q $. Then there is no possibility for $ \Sigma(n, r) $ allowed by (a). Indeed, no set has exactly one common element with all sets $ \{ a, b \}, \{ a, c \} $ and $ \{ b, c \} $.

(d) By (c), there is a function $\pi:\Nat\to\Nat$ with $\pi(n)\in\Sigma(n,m)$ for all $n\neq m$. The function $ \pi $ is injective. Indeed, assume that $ n \neq m $ and pick distinct $ p, q $ different from $ n $ and $ m $. Then $ \pi(n) $ and $ \pi(m) $ belong to the sets $ \Sigma(n, p) $ and $ \Sigma(m, q) $ that are disjoint by (b).

(e) As $ \pi $ is injective, we have $ \Sigma(n, m) = \{ \pi(n), \pi(m) \} $ for all $ \{ n, m \} $, and we can write
$$ \big\Vert Ie_{n,m} - s(n, m) \cdot Je_{\pi(n), \pi(m)} \big\Vert_{Z} < \frac{1}{7} $$
and
$$ f(n, m) - g(\pi(n), \pi(m)) < \frac{3}{\delta} \cdot \eta. $$

(f) Due to the symmetry, there is an injective function $ \xi : \mathbb{N} \to \mathbb{N} $ with the property that
$$ \big\Vert Je_{k, l} - s'(k, l) \cdot Ie_{\xi(k), \xi(l)} \big\Vert_{Z} < \frac{1}{7} $$
and
$$ g(k, l) - f(\xi(k), \xi(l)) < \frac{3}{\delta} \cdot \eta $$
for all $ \{ k, l \} $ and for a suitable $ s'(k, l) \in \{ -1, 1 \} $.

(g) We have $ \pi(\xi(k)) = k $ for every $ k $ and, consequently, $ \pi $ is surjective. Let $ k \in \mathbb{N} $ be given. For every $ l \neq k $, we obtain
\begin{align*}
\big\Vert e_{k, l} - s'( & k, l) s(\xi(k), \xi(l)) \cdot e_{\pi(\xi(k)), \pi(\xi(l))} \big\Vert_{g} \\
 & \leq \big\Vert Je_{k, l} - s'(k, l) \cdot Ie_{\xi(k), \xi(l)} \big\Vert_{Z} \\
 & \quad + | s'(k, l) | \cdot \big\Vert Ie_{\xi(k), \xi(l)} - s(\xi(k), \xi(l)) \cdot Je_{\pi(\xi(k)), \pi(\xi(l))} \big\Vert_{Z} \\
 & < \frac{1}{7} + \frac{1}{7}.
\end{align*}
Due to Lemma~\ref{lemmsep}, this is possible only if $ \{ k, l \} = \{ \pi(\xi(k)), \pi(\xi(l)) \} $ and $ s'(k, l) s(\xi(k), \xi(l)) = 1 $. If we pick distinct $ l_{1} $ and $ l_{2} $ different from $ k $, then $ k \in \{ \pi(\xi(k)), \pi(\xi(l_{1})) \} \cap \{ \pi(\xi(k)), \pi(\xi(l_{2})) \} = \{ \pi(\xi(k)) \} $.

(h) We check that $ \pi $ works. We already know that $ \pi \in S_{\infty} $ and $ \pi^{-1} = \xi $. Thus, we obtain
$$ g(\pi(n), \pi(m)) - f(n, m) = g(\pi(n), \pi(m)) - f(\xi(\pi(n)), \xi(\pi(m))) < \frac{3}{\delta} \cdot \eta $$
for every $ \{ n, m \} \in [\mathbb{N}]^{2} $. Finally, combining this with an above inequality,
$$ \big| g(\pi(n), \pi(m)) - f(n, m) \big| < \frac{3}{\delta} \cdot \eta, $$
which completes the proof of the lemma.
\end{proof}

\begin{proof}[Proof of Theorem \ref{thmGHtoK}]
During the proof, we make no difference between a metric $ f \in \Met_{1/2}^{1} $ and the corresponding function $ f : [\mathbb{N}]^{2} \to [1/2, 1] $. For $ f \in \Met_{1/2}^{1} $, we can thus consider the norm $ \Vert \cdot \Vert_{f} $ defined above. It is clear that there is an injective Borel mapping from $ \Met_{1/2}^{1} $ into $ \Banach $ such that the image of $ f $ is isometric to $ (\ell_{2}, \Vert \cdot \Vert_{f}) $ (it is sufficient to restrict the norm $ \Vert \cdot \Vert_{f} $ to $ V $).

To prove the first part of the theorem, we show a series of inequalities that illustrates that the Gromov-Hausdorff distance of $ M_{f} $ and $ M_{g} $ and all the involved distances between $ (\ell_{2}, \Vert \cdot \Vert_{f}) $ and $ (\ell_{2}, \Vert \cdot \Vert_{g}) $ are uniformly equivalent.

(1) We show that
$$ \rho_{BM} \big( (\ell_{2}, \Vert \cdot \Vert_{f}), (\ell_{2}, \Vert \cdot \Vert_{g}) \big) \leq C \rho_{GH}(M_{f}, M_{g}) $$
for every $ f, g \in \Met_{1/2}^{1} $. If $ \rho_{GH}(M_{f}, M_{g}) \geq \frac{1}{4} $, then
$$ \rho_{BM} \big( (\ell_{2}, \Vert \cdot \Vert_{f}), (\ell_{2}, \Vert \cdot \Vert_{g}) \big) \leq 2 \log \Big( \frac{200}{199} \Big) \leq 2 \log \Big( \frac{200}{199} \Big) \cdot 4 \rho_{GH}(M_{f}, M_{g}). $$
Assuming $ \rho_{GH}(M_{f}, M_{g}) < \frac{1}{4} $, we pick $ r $ with $ \rho_{GH}(M_{f}, M_{g}) < r < \frac{1}{4} $. Since $ f, g \in \Met_{p} $ and $ \rho_{GH}(M_{f}, M_{g}) < p/2 $ for $ p = 1/2 $, Lemma~\ref{lem:GHequivalenceUniformlyDiscrete} provides $ \pi \in S_{\infty} $ such that
$$ \big| g(\pi(n), \pi(m)) - f(n, m) \big| \leq 2r, \quad \{ n, m \} \in [\mathbb{N}]^{2}. $$
Let us consider the linear isometry $ T : \ell_{2} \to \ell_{2} $ which maps $ e_{n} $ to $ e_{\pi(n)} $. For $ x \in \ell_{2} $, we have
\begin{align*}
\Vert Tx \Vert_{g} & = \sup \Big( \{ \Vert Tx \Vert_{\ell_{2}} \} \cup \Big\{ \frac{1}{\sqrt{2}} \cdot \big( \alpha + \delta \cdot g(k, l) \big) \cdot |(Tx)_{k}+(Tx)_{l}| : k \neq l \Big\} \Big) \\
 & = \sup \Big( \{ \Vert x \Vert_{\ell_{2}} \} \cup \Big\{ \frac{1}{\sqrt{2}} \cdot \big( \alpha + \delta \cdot g(\pi(n), \pi(m)) \big) \cdot |x_{n}+x_{m}| : n \neq m \Big\} \Big),
\end{align*}
and so
\begin{align*}
| \Vert Tx \Vert_{g} - \Vert x \Vert_{f} | & \leq \sup \Big\{ \frac{1}{\sqrt{2}} \cdot \delta \cdot \big| g(\pi(n), \pi(m)) - f(n, m) \big| \cdot |x_{n}+x_{m}| : n \neq m \Big\} \\
 & \leq \delta \cdot 2r \cdot \Vert x \Vert_{\ell_{2}}.
\end{align*}
It follows that $ \Vert Tx \Vert_{g} \leq (1+2\delta r)\Vert x \Vert_{f} $ and $ \Vert x \Vert_{f} \leq (1+2\delta r)\Vert Tx \Vert_{g} $. We obtain $ \rho_{BM}((\ell_{2}, \Vert \cdot \Vert_{f}), (\ell_{2}, \Vert \cdot \Vert_{g})) \leq 2 \log (1+2\delta r) \leq 2 \cdot 2\delta r $. As $ r $ could be chosen arbitrarily close to $ \rho_{GH}(M_{f}, M_{g}) $, we arrive at
$$ \rho_{BM} \big( (\ell_{2}, \Vert \cdot \Vert_{f}), (\ell_{2}, \Vert \cdot \Vert_{g}) \big) \leq 4\delta \rho_{GH}(M_{f}, M_{g}). $$
Therefore, the choice $ C = \max \{ 8 \log (\frac{200}{199}), 4\delta \} $ works.

(2) It is easy to check that
\begin{align*}
2\rho_{N} \big( (\ell_{2}, \Vert \cdot \Vert_{f}) & , (\ell_{2}, \Vert \cdot \Vert_{g}) \big) \leq \rho_{U} \big( (\ell_{2}, \Vert \cdot \Vert_{f}), (\ell_{2}, \Vert \cdot \Vert_{g}) \big) \\
 & \leq 2\rho_{L} \big( (\ell_{2}, \Vert \cdot \Vert_{f}), (\ell_{2}, \Vert \cdot \Vert_{g}) \big) \leq \rho_{BM} \big( (\ell_{2}, \Vert \cdot \Vert_{f}), (\ell_{2}, \Vert \cdot \Vert_{g}) \big)
\end{align*}
for every $ f, g \in \Met_{1/2}^{1} $.

(3) By \cite[Proposition~2.1]{DuKa}, we have
$$ \rho_{GH}^{\Banach} \big( (\ell_{2}, \Vert \cdot \Vert_{f}), (\ell_{2}, \Vert \cdot \Vert_{g}) \big) \leq e^{2\rho_{N} ((\ell_{2}, \Vert \cdot \Vert_{f}), (\ell_{2}, \Vert \cdot \Vert_{g}))} - 1 $$
for every $ f, g \in \Met_{1/2}^{1} $.

(4) There is a function $ \varphi : (0, 1] \to (0, 1] $ with $ \lim_{\varepsilon \to 0} \varphi(\varepsilon) = 0 $ such that
$$ \rho_{K} \big( (\ell_{2}, \Vert \cdot \Vert_{f}), (\ell_{2}, \Vert \cdot \Vert_{g}) \big) \leq C \varphi \Big( \rho_{GH}^{\Banach} \big( (\ell_{2}, \Vert \cdot \Vert_{f}), (\ell_{2}, \Vert \cdot \Vert_{g}) \big) \Big) $$
for every $ f, g \in \Met_{1/2}^{1} $. Considering any $ 0 < r < 1 $, a function provided by \cite[Theorem~3.6]{KalOst} (denoted $ f $ there) works. Indeed, if we adopt some notation from \cite{KalOst}, then \cite[Theorem~3.7]{KalOst} provides
$$ \rho_{K} \big( (\ell_{2}, \Vert \cdot \Vert_{f}), (\ell_{2}, \Vert \cdot \Vert_{g}) \big) \leq C(r) \cdot \kappa_{0} \big( (\ell_{2}, \Vert \cdot \Vert_{f}) \big) \cdot d_{r} \big( (\ell_{2}, \Vert \cdot \Vert_{f}), (\ell_{2}, \Vert \cdot \Vert_{g}) \big) $$
$$ \leq C(r) \cdot \frac{200}{199} \cdot \kappa_{0}(\ell_{2}) \cdot \varphi \Big( \rho_{GH}^{\Banach} \big( (\ell_{2}, \Vert \cdot \Vert_{f}), (\ell_{2}, \Vert \cdot \Vert_{g}) \big) \Big). $$

(5) We show that $$ \rho_{GH}(M_{f}, M_{g}) \leq C \rho_{K} \big( (\ell_{2}, \Vert \cdot \Vert_{f}), (\ell_{2}, \Vert \cdot \Vert_{g}) \big) $$
for every $ f, g \in \Met_{1/2}^{1} $. Let us denote
$$ d = \rho_{K} \big( (\ell_{2}, \Vert \cdot \Vert_{f}), (\ell_{2}, \Vert \cdot \Vert_{g}) \big), \quad \eta_{max} = \min \Big\{ \frac{1}{100}, \frac{1}{10} \cdot \frac{\sqrt{\alpha^{2}-1}}{\alpha}, \frac{1}{2} \Big( 1 - \frac{1}{\alpha} \Big) \Big\}. $$
If $ d \geq \eta_{max} $, then
$$ \rho_{GH}(M_{f}, M_{g}) \leq 1 \leq \frac{1}{\eta_{max}} \cdot d. $$
Assuming $ d < \eta_{max} $, we pick $ d < \eta < \eta_{max} $. Then Lemma~\ref{lemmGHtoK} can be applied, and we obtain $ f \simeq_{2\varepsilon} g $ for $ \varepsilon = \frac{1}{2} \cdot \frac{3}{\delta} \cdot \eta $. By Lemma~\ref{lem:lehciImplikace}, we get $ \rho_{GH}(M_{f}, M_{g}) \leq \frac{1}{2} \cdot \frac{3}{\delta} \cdot \eta $. As $ \eta $ can be chosen arbitrarily close to $ d $, we arrive at
$$ \rho_{GH}(M_{f}, M_{g}) \leq \frac{1}{2} \cdot \frac{3}{\delta} \cdot d. $$
It follows that the choice $ C = \max \{ \frac{1}{\eta_{max}}, \frac{3}{2\delta} \} $ works.

Finally, concerning the moreover part of the theorem, it remains to notice that
$$ \rho_{K} \big( (\ell_{2}, \Vert \cdot \Vert_{f}), (\ell_{2}, \Vert \cdot \Vert_{g}) \big) \leq \rho_{BM} \big( (\ell_{2}, \Vert \cdot \Vert_{f}), (\ell_{2}, \Vert \cdot \Vert_{g}) \big) $$
by \cite[Proposition~6.2]{o94} (or \cite[Proposition~2.1]{DuKa}), and it is sufficient to use the inequalities proven in (1) and (5).
\end{proof}

\section{Concluding remarks}\label{section:problems}
The following diagram summarizes the reducibility results we have proved in this paper and includes also the reducibility results proved in \cite{CDKpart1}. By $\rho_{E_G}$ we denote the pseudometric induced by the universal orbit equivalence relation $E_G$ discussed in \cite[Section 5]{CDKpart1}; by $\rho_{S_\infty, d}$ we denote the CTR orbit pseudometric given in \cite[Section 3, Example 2]{CDKpart1},  by $\rho_{univ}$ we mean the universal analytic pseudometric which exists by \cite[Theorem 6]{CDKpart1}; all the remaining pseudometrics are explained in this paper. The reducibilities which are not explicitly mentioned in the diagram are not known to us. 

\bigskip

\begin{tikzpicture}[->,>=stealth']

\tikzset{
    state/.style={
           rectangle,
           rounded corners,
           draw=black, very thick,
           minimum height=2em,
           inner sep=2pt,
           text centered,
           },
}
 
 \node[state] (prvniBox) 
 {\begin{tabular}{l}
 	$\rho_{GH}$, $\rho_{GH}\upharpoonright \Met_p$\\[10pt]
    $\rho_{GH}\upharpoonright \Met^q$, $\rho_{GH}\upharpoonright \Met_p^q$\\[10pt]
    $\rho_L\upharpoonright \Met_p^q$, $\rho_L$, $\rho_{HL}$
 \end{tabular}};
 
   \node[state,    	
  text width=2cm, 	
  left of=prvniBox, 
  yshift=2.5cm,
  node distance=0cm,
  anchor=center] (nultyBox) 
 {\begin{tabular}{l}
 	$\rho_{S_\infty, d}$    
 \end{tabular}};
 
 \node[state,    	
  text width=2cm, 	
  right of=prvniBox, 	
  node distance=4.5cm, 	
  anchor=center] (druhyBox) 	
 {
 \begin{tabular}{l} 
  $\rho_K$, $\rho_{BM}$\\[10pt]
  $\rho_{GH}^\Banach$, $\rho_N$\\[10pt]
  $\rho_L^\Banach$
 \end{tabular}
 };
 
 \node[state,    	
  text width=2cm, 	
  left of=druhyBox, 
  yshift=2.5cm,
  node distance=0cm,
  anchor=center] (zapornyBox) 
 {\begin{tabular}{l}
 	$\rho_{E_G}$    
 \end{tabular}};
 
  \node[state,
  text width=2cm,
  right of=druhyBox,
  node distance=4cm,
  anchor=center] (tretiBox)
 {
 \begin{tabular}{l} 	
  $\rho_{U}$
 \end{tabular}
 };
 
  \node[state,    	
  text width=2cm, 	
  right of=tretiBox,
  yshift=-1.5cm,
  node distance=0cm,
  anchor=center] (ctvrtyBox) 
 {\begin{tabular}{l}
 	$\rho_{univ}$    
 \end{tabular}};
 
 \path (druhyBox) edge  node[anchor=north,above]{$\leq_{B,u}$} (tretiBox)
    (nultyBox) edge  node[anchor=north,right]{$\sim_{B,u}$} (prvniBox)
    (tretiBox) edge  node[anchor=north,right]{$\leq_{B,u}$} (ctvrtyBox)
    (druhyBox) edge node[anchor=north,above]{$\sim_{B,u}$} (prvniBox)
    (zapornyBox) edge node[anchor=north,right]{$\leq_{B,u}$} (druhyBox)
    (prvniBox) edge (druhyBox)
    (prvniBox) edge (nultyBox);
     
 \end{tikzpicture}

\bigskip

We believe there is enough space for investigating other reductions. The interested reader can find many more distances for which their exact place in the reducibility diagram is not known. This includes the uniform distance $\rho_U$, or distances that we mentioned in \cite[Section 3]{CDKpart1} but left untouched, such as the completely bounded Banach-Mazur distance or e.g. the orbit version of the Kadison-Kastler distance.\\

\noindent{\bf Acknowledgements}

\medskip
M. C\' uth was supported by Charles University Research program No. UNCE/SCI/023 and by the Research grant GA\v CR 17-04197Y. M. Doucha was supported by the GA\v CR project EXPRO 20-31529X, and RVO: 67985840. O. Kurka was supported by the Research grant GA\v CR 17-04197Y and by RVO: 67985840.

\end{document}